\documentclass[10pt]{article}
\usepackage{graphicx,enumerate,amssymb,tikz-cd}
\usepackage[most]{tcolorbox}
\usepackage[notrig]{physics}
\usepackage{amsthm}
\usepackage{dsfont}
\usepackage[font=footnotesize,labelfont=small]{subcaption}
\usepackage[ruled,vlined]{algorithm2e}
\usepackage[left=3.0cm,right=3.0cm,top=2.5cm,bottom=2.5cm,includeheadfoot]{geometry} 
\geometry{margin=1.0in,a4paper}
\usepackage{tikz}
\usepackage{mathtools}
\usepackage{todonotes}
\usepackage{braket}
\usepackage{pgfplots}
\usepackage{float}
\usepackage{wrapfig}
\pgfplotsset{width=7cm,compat=1.18} 
\usetikzlibrary{shapes.geometric, arrows}

\usepackage[normalem]{ulem}

\usepackage{multirow}

\usepackage[colorlinks=true, pdfstartview=FitV, linkcolor=blue, 
            citecolor=blue, urlcolor=blue]{hyperref}
\usepackage{comment}

\numberwithin{equation}{section}
\numberwithin{figure}{section}
\setcounter{tocdepth}{3}

\newtheoremstyle{bfnote}%
  {}{}
  {\itshape}{}
  {\bfseries}{.}
  { }{\thmname{#1}\thmnumber{ #2}\thmnote{ (#3)}}

\newtheoremstyle{bfdef}%
  {}{}
  {}{}
  {\bfseries}{.}
  { }{\thmname{#1}\thmnumber{ #2}\thmnote{ (#3)}}

\theoremstyle{bfnote}
\newtheorem{theorem}{Theorem}[section]
\newtheorem{lemma}[theorem]{Lemma}
\newtheorem{proposition}[theorem]{Proposition}
\newtheorem{corollary}[theorem]{Corollary}
\theoremstyle{bfdef}
\newtheorem{definition}[theorem]{Definition}

\newtheorem{remark}[theorem]{Remark}



\newcommand{\bitem}{\begin{itemize}}
\newcommand{\eitem}{\end{itemize}}
\newcommand{\mc}[1]{\mathcal{#1}}
\newcommand{\mb}[1]{\mathbb{#1}}
\newcommand{\mbf}[1]{\mathbf{#1}}

\newcommand{\II}{\mathbb{I}}
\newcommand{\vp}{\mathfrak{p}}
\newcommand{\vP}{\mathfrak{P}}
\newcommand{\vq}{\mathfrak{S}}
\newcommand{\fq}{\mathsf{S}}
\newcommand{\fS}{\mathsf{S}}

\newcommand{\fh}{\mathsf{h}}
\newcommand{\fL}{\mathsf{L}}
\newcommand{\fD}{\mathsf{D}}
\newcommand{\vv}{\mathfrak{v}}
\newcommand{\vS}{\mathfrak{S}}
\newcommand{\cc}{\mathfrak{c}}
\newcommand{\HH}{\varphi}
\newcommand{\fr}{\mathfrak{g}}
\newcommand{\pd}{\mathfrak{d}}
\newcommand{\vt}{\vartheta}
\renewcommand{\S}{\scalebox{0.7}{$\overset{\circ}{\triangle}$}}

\newcommand{\N}{\mathbb{N}}
\newcommand{\R}{\mathbb{R}}

\newcommand{\TT}{\mathbb{T}}

\newcommand{\bpm}{\begin{pmatrix}}
\newcommand{\epm}{\end{pmatrix}}
\newcommand{\bvm}{\begin{vmatrix}}
\newcommand{\evm}{\end{vmatrix}}
\newcommand{\bsm}{\left(\begin{smallmatrix}}
\newcommand{\esm}{\end{smallmatrix}\right)}

\newcommand{\T}{\top}

\newcommand{\la}{\langle}
\newcommand{\ra}{\rangle}

\newcommand{\mrm}[1]{\mathrm{#1}}
\newcommand{\msf}[1]{\mathsf{#1}}
\newcommand{\mfk}[1]{\mathfrak{#1}}

\newcommand{\veps}{\varepsilon}

\newcommand{\w}{\omega}

\newcommand{\gdw}{\Leftrightarrow}
\newcommand{\vphi}{\varphi}

\newcommand{\tth}{\tilde{\theta}}

\DeclareMathSymbol{\mydiv}{\mathbin}{symbols}{"04}

\DeclareMathOperator{\sm}{sm}

\DeclareMathOperator{\logexp}{logexp}

\DeclareMathOperator{\ggrad}{grad}



\makeatletter
\def\widebreve{\mathpalette\wide@breve}
\def\wide@breve#1#2{\sbox\z@{$#1#2$}%
     \mathop{\vbox{\m@th\ialign{##\crcr
\kern0.08em\brevefill#1{0.8\wd\z@}\crcr\noalign{\nointerlineskip}%
                    $\hss#1#2\hss$\crcr}}}\limits}
\def\brevefill#1#2{$\m@th\sbox\tw@{$#1($}%
  \hss\resizebox{#2}{\wd\tw@}{\rotatebox[origin=c]{90}{\upshape(}}\hss$}
\makeatletter

\usepackage{scalerel}
\usepackage{authblk}

\providecommand{\keywords}[1]
{
  \small	
  \textbf{\textit{Keywords:}} #1
  \normalsize
}

\title{Sigma Flows for Image and Data Labeling and \\ Learning Structured Prediction\footnote{\textbf{Acknowledgements.}
This work is funded by the Deutsche Forschungsgemeinschaft (DFG), grant SCHN 457/17-2, within the priority programme SPP 2298: Theoretical Foundations of Deep Learning. This work is supported by the Deutsche Forschungsgemeinschaft (DFG, German Research Foundation) under Germany's Excellence Strategy EXC 2181/1 - 390900948 (the Heidelberg STRUCTURES Excellence Cluster).}
}

\author[1,4]{Jonas Cassel}
\author[1]{Bastian Boll}
\author[2]{Stefania Petra}
\author[3,4]{Peter Albers}
\author[1,4]{Christoph Schn\"{o}rr}
\affil[1]{\small Institute for Mathematics, Image and Pattern Analysis Group, Heidelberg University}
\affil[2]{Institute of Mathematics, Mathematical Imaging Group \& CAAPS, University of Augsburg}
\affil[3]{Institute for Mathematics, Symplectic Geometry Group, Heidelberg University}
\affil[4]{Research Station Geometry and Dynamics, Heidelberg University}
\date{\small\today\normalsize} 

\begin{document}
\maketitle

\begin{abstract}
This paper introduces the \textit{sigma flow model} for the prediction of structured labelings of data observed on Riemannian manifolds, including Euclidean image domains as special case. The approach combines the \textit{Laplace-Beltrami framework} for image denoising and enhancement, introduced by Sochen, Kimmel and Malladi about 25 years ago, and the \textit{assignment flow approach} introduced and studied by the authors. 

The sigma flow arises as the Riemannian gradient flow of generalized harmonic energies and is thus governed by a nonlinear geometric PDE which determines a harmonic map from a closed Riemannian domain manifold to a statistical manifold, equipped with the Fisher-Rao metric from information geometry. A specific ingredient of the sigma flow is the mutual dependency of the Riemannian metric of the domain manifold on the evolving state. This makes the approach amenable to machine learning in a specific way, by realizing this dependency through a mapping with compact time-variant parametrization that can be learned from data. Proof of concept experiments demonstrate the expressivity of the sigma flow model and prediction performance. 

Structural similarities to transformer network architectures and  networks generated by the geometric integration of sigma flows  are pointed out, which highlights the connection to deep learning and, conversely, may stimulate the use of geometric design principles for structured prediction in other areas of scientific machine learning.
\end{abstract}
\keywords{harmonic maps, information geometry, Riemannian gradient flows, Laplace-Beltrami operator, neural ODEs, \\ geometric deep learning.}
\\[0.1cm]
\small
\textit{2020 Mathematics Subject Classification.} 53B12, 35R01, 35R02, 
62H35, 68U10, 68T05, 68T07.
\normalsize

\newpage
\tableofcontents
\newpage


\section{Introduction}
\subsection{Overview, Motivation}\label{sec:overview-motivation}

Since its beginnings, imaging science has been employing a broad range of mathematical methods \cite{Scherzer:2015aa}, including models based on partial differential equations (PDEs), variational methods, probabilistic graphical models and differential geometry. In addition, since more than a decade, machine learning has become an integral part of research in \textit{computer vision} in order to deal with complex real-world scenarios. This trend continues, at a slower rate, in the field of \textit{mathematical imaging} where the quest for explainability in methodological research is more emphasized than in computer vision. Naturally, this synergy between mathematical modeling and machine learning has been elaborated most, so far, in connection with the oldest class of problems of the field, image denoising \cite{Elad:2023aa}.

\vspace{0.2cm}
\noindent
The work presented in this paper has been motivated by three lines of research:
\begin{enumerate}[(1)]
\item The \textit{Laplace-Beltrami framework} \cite{sochenGeneralFrameworkLow1998} for low-level vision which introduced the mathematical framework of harmonic maps  \cite[Ch.~9]{jostRiemannianGeometryGeometric2017}
\begin{equation}\label{eq:f-M-N}
f\colon M\to N
\end{equation}
between two Riemannian manifolds 
$(M,h)$ and $(N,g)$, to the field of mathematical imaging and computer vision. $f$ is supposed to minimize the so-called harmonic energy, and the corresponding gradient flow defines a geometric diffusion-type PDE.  
For the specific case $N=\R$, this boils down to functions $f\colon M\to\R$ minimizing the corresponding Dirichlet integral, and specializing to functions $f \colon S^{1} \to N$ results in the familiar geodesic equations as Euler-Lagrange equation. 

In this sense, gradient flows corresponding to the general case \eqref{eq:f-M-N} may be considered as generalized higher-dimensional geodesics. Moreover, by making the Riemannian metric $h$ of the domain manifold $M$ dependent on the evolving state, a broad range of PDE-based models, both established ones and our novel model, may be devised in a systematic way, as shown in the present paper.
\item
\textit{Assignment flows} \cite{astromImageLabelingAssignment2017,schnorrAssignmentFlows2020} provide a framework for the analysis of metric data on graphs, including image (feature) data on grid graphs as special case. The basic idea is to adopt products of statistical manifolds, in the sense of information geometry \cite{amariMethodsInformationGeometry2000}, as state space $(N,\fr)$ equipped with the Fisher-Rao metric $\fr$, and to model contextual inference by flows which emerge from geometric coupling of individual flows on each factor space. Suitable parametrizations of these couplings are amenable to learning these parameters from data, due to the inherent smoothness of the model. Assignment flows, therefore, may be considered as `neural ODEs' from the viewpoint of machine learning.

The most basic instance of this framework concerns the product manifold of open probability simplices. The resulting assignment flows perform \textit{labeling} of metric data on graphs and may be represented as non-local graph-PDEs \cite{Sitenko:2023aa}. A continuous-domain formulation of a special case of assignment flows, on a flat image domain $M\subset\R^{2}$, was studied in \cite{Savarino:2021wt}.
\item
The use of geometric methods for representing both domains and data has become an active field of research in \textit{machine learning} as well \cite{Bronstein:2017aa}. This naturally motivates to consider the synergy between classical methods and data-driven machine learning, beyond image denoising.
\end{enumerate}
The \textit{goal of this paper} is to combine these lines of research in order to extend the Laplace-Beltrami framework (1) to an intrinsic approach for metric data \textit{labeling}. This is achieved by choosing the target manifold $(N,g)$ as the relative interior of the probability simplex equipped with Fisher-Rao metric (2). Like the assignment flow approach, the resulting \textit{sigma flow} approach is smooth and amenable to machine learning (3). Figure \ref{fig:top-level-view} illustrates the leitmotiv of this paper.

\begin{figure}
\centerline{
\includegraphics[width=0.65\textwidth]{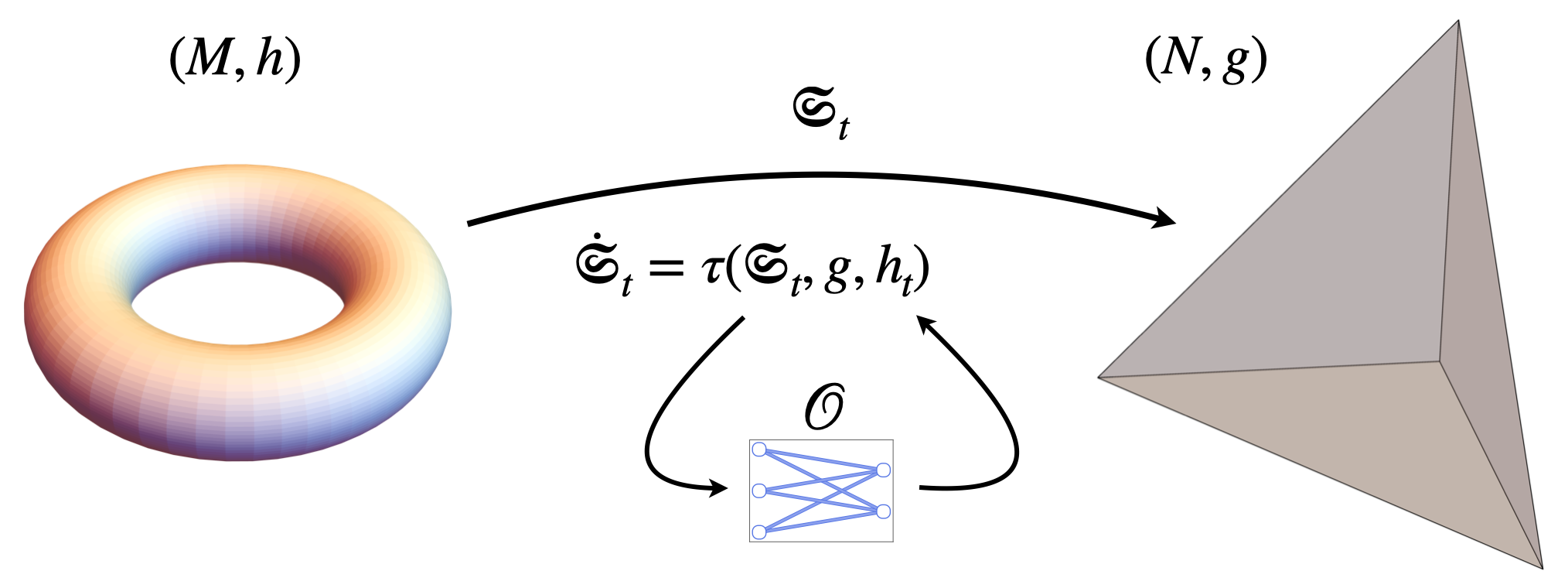}
}
\caption{
\textbf{Sigma flow model.} Top-level sketch of the approach introduced in this paper. The approach combines the assignment flow approach (Section \ref{sec:IG-AF}) and the Laplace-Beltrami framework (Section \ref{sec:Harmonic-Maps-Geometric-Diffusion}) by choosing the interior of the probability simplex, equipped with the Fisher-Rao metric $g=\fr$, as target manifold $N$. This naturally extends nonlinear geometric diffusion to metric data labeling through the novel sigma flow model (Section \ref{sec:Sigmaflow}). Choosing the metric $h$ of the domain manifold $M$ as a function $h_{t}=\mc{O}(\mfk{S}_{t})$ of the state $\mfk{S}_{t}$ whose evolution is governed by the sigma flow, enables to represent a range of established models of PDE-based mathematical image analysis (Sections \ref{sec:Betrami-Variants} and \ref{sec:AID}). In particular, parametrizing the mapping $\mc{O}$ by a neural network enables to learn adaptive sigma flows from data. According to the two-stage parametrization 
$\mfk{S}_{t}\xlongrightarrow{\mc{O}} h_{t}\xlongrightarrow{\Delta_{h_{t}}} \tau(\mfk{S}_{t},g,h_{t}) \xlongrightarrow{\int \dd t} \mfk{S}_{t}$ of the tension field $\tau$ that drives the sigma flow and is generated by the Laplace-Beltrami operator $\Delta_{h_{t}}$, low-dimensional parameterizations of the mapping $\mc{O}$ already achieve strong adaptivity.
}
\label{fig:top-level-view}
\end{figure}

`Sigma flow' reflects the similarity of our model, from the mathematical point of view, to sigma models of mathematical physics -- see Remark \ref{rem:theo-phys-sigma} on page \pageref{rem:theo-phys-sigma} and, e.g., \cite[Ch.~8]{Hori:2003aa} and \cite[Sec. 3.7]{Polchinski:1998rq} for a general discussion. This is not surprising since the original Laplace-Beltrami approach \cite{sochenGeneralFrameworkLow1998} has been also motivated by mathematical models of high-energy physics. 

Harmonic maps between level surfaces of Hessian domains in the sense of \cite{shimaGeometryHessianStructures2007}, and relative to the $\alpha$-connections of information geometry \cite{amariMethodsInformationGeometry2000}, were studied in \cite{Uohashi:2014aa}. While our target manifold $(N,\fr)$ is a Hessian manifold, the domain manifold $(M,h)$, with a metric $h$ learned from data, generally is not. The paper \cite{Uohashi:2014aa} concludes: ``It is an important problem to find applications of non-trivial harmonic maps relative to $\alpha$-connections.'' The approach introduced in the present paper provides such an application using a more general set-up.

\subsection{Related Work}
No attempt is made here to review the vast literature. We merely point out few prominent works in order to contextualize our paper from the three different viewpoints outlined above.

\subsubsection{PDE-Based Image and Multi-Dimensional Data Analysis}\label{sec:related-PDE-AF}

\textit{PDE-based image analysis} has started with the seminal paper \cite{Alvarez:1993aa}, which reports a fundamental study of PDEs whose solutions provide meaningful multiscale transformations of the input data. Here `meaningful' refers to properties like \textit{locality}, \textit{recursivity}, \textit{causality} (for time-variant data) and \textit{invariance} with respect to various transformations. This work still impacts current research. For example, continuous PDE-based formulations of the basic operations of mathematical morphology (dilation, erosion) form the basis for state-of-the-art network architectures that accomplish the equivariant detection of `thin structure'  and perceptual grouping in noisy 2D and 3D image data \cite{smetsPDEbasedGroupEquivariant2023}. For further basic PDE models and the corresponding background, we refer to \cite{Weickert:1998aa}.

Numerous papers of the classical literature on image processing deal with approaches using iterative neighborhood processing that may be more or less related to PDE-based or variational approaches; see, e.g., 
\cite{Manduchi:1998aa} and the survey papers \cite{Buades:2010aa,Milanfar:2013ab}. For a detailed discussion of such schemes from the PDE point of view, we refer to \cite{Buades:2006ue}.

Regarding \textit{variational image denoising}, another line of research based on non-smooth convex functionals and using the total variation (TV) functional as regularizer, has been initiated by \cite{rudinNonlinearTotalVariation1992}. We refer to \cite{Bredies:2010aa,Lefkimmiatis:2015aa,Duran:2016aa} for advanced generalized TV models and to \cite{Caselles:2015ws} for a survey. 
In this context, related variational models for \textit{image labeling} were studied \cite{Lellmann:2011aa,Chambolle:2012aa} which constitute convex relaxations of the combinatorial image partitioning problem. From the viewpoint of contemporary research, the inherent non-smoothness of such approaches constitutes a serious obstacle for enhancing model expressivity by parametrization and, in particular, by \textit{learning}  parameter values from data. 

A powerful class of approaches to \textit{binary image segmentation} in terms of `diffusion and threshold' dynamics was initiated by \cite{Merriman:1994aa}. A survey of this line of research is provided in \cite{Gennip:2014aa,Bertozzi:2016aa}, where an extension to graphs of the underlying Allen-Cahn equation, as $L^{2}$-gradient of the Ginzburg-Landau functional, is studied. A drawback of this approach, from our viewpoint, is the lack of a natural formulation of the image and graph data labeling problem with \textit{multiple} labels (non-binary segmentation).

A foundational paper for \textit{geometric PDE}-based image analysis is the \textit{Beltrami flow} introduced in \cite{sochenGeneralFrameworkLow1998}, which minimizes an energy functional in terms of the embedding map of a given two-dimensional scalar- or vector-valued image into an Euclidean space. This framework can be used to `geometrize' the aforementioned classical PDE-based approaches, like mean curvature motion induced by the total variation measure, Perona-Malik edge-preserving nonlinear diffusion, etc., in order to achieve also image data \textit{enhancement}, besides image denoising, by representing images as manifolds in Euclidean feature spaces \cite{kimmel2000images}. A \textit{continuous}-domain \textit{geometric} perspective turned out to be essential also for interpreting graph-Laplacian based denoising schemes in \cite{Pang:2017aa}.  
The Laplace-Beltrami framework has been extended to generalized Laplacians on vector bundles in \cite{batardHeatEquationsVector2011} and further generalized to equivariant nonlinear diffusion of vector valued data in \cite{batardClassGeneralizedLaplacians2014}, taking the $SO(3)$-action on the HSL color space as a case study. Further examples of works which motivate PDEs from various geometric viewpoints (sub-Riemannian geometry, homogeneous spaces) for data denoising, inpainting, enhancement and thin structure detection, include \cite{Citti:2016aa,Boscain:2018aa,Smets:2021tw}.

\textit{Assignment flows} \cite{astromImageLabelingAssignment2017,schnorrAssignmentFlows2020} denote a class of approaches for the analysis of metric data on graphs and for structured prediction. The basic idea, motivated by information geometry \cite{amariMethodsInformationGeometry2000}, is to assign to each vertex an elementary statistical manifold as state space equipped with the Fisher-Rao metric and to couple the corresponding Riemannian ascent flows by means of a parametrized affinity function across the graph. Geometric integration of the coupled continuous-time flow \cite{zeilmannGeometricNumericalIntegration2020} generates a network with layers indexed by the corresponding discrete points of time, whose parameters are amenable to learning from data by minimizing a suitable loss function \cite{Huhnerbein:2021th}. 
The choice of a particular statistical manifold as state space depends on the data analysis task. The most basic choice, adopted also in this paper, is the relative interior of the probability simplex for node-wise classification, i.e.~data \textit{labeling}. The more expressive case of density matrices as a non-commutative alternative regarding probabilistic models, has been recently introduced and studied in \cite{Schwarz:2023ab}. 

Major differences of assignment flows to the classes of approaches sketched above include (i) that assignment flows constitute a natural approach to \textit{non-binary} labeling with an arbitrary number of labels and (ii) that integral solutions are obtained after convergence by `continuous rounding', induced by the underlying geometry which couples diffusion and rounding in a \textit{single} process.  Stability and convergence to integral solutions was studied in \cite{Zern:2020aa}. Extensions to \textit{unsupervised} and \textit{self-supervised} data labeling were presented in \cite{Zern:2020ab,Zisler:2020aa}. The recent paper \cite{Boll:2024ac} utilizes randomized assignment flows for the \textit{generative} modeling of high-dimensional joint probability distributions of discrete random variables, via measure transport on the assignment manifold and training by Riemannian flow matching.

Assignment flows can be also represented by a \textit{nonlocal} graph-PDE and shown to comprise a range of related earlier work for data labeling on graphs as special case \cite{Sitenko:2023aa}. The \textit{local} counterpart as PDE on a \textit{continuous} domain was introduced in \cite{Savarino:2021wt}, which is the starting point of the present paper.

\subsubsection{Harmonic Maps and Geometric Gradient Flows}\label{sec:Harmonic-Maps-Gradient-Flows}

We focus briefly on the problem to show the existence and global convergence of gradient flows corresponding to energy functionals which determine harmonic maps and their regularity. This requires us to consider more general spaces like $L^{2}(M,N)$ and the corresponding Sobolev space $W^{1,2}(M,N)$ that contain also non-smooth maps $f$.

For compact Riemannian manifolds $M, N$, lower-semicontinuity of the harmonic map energy functional with respect to $L^{2}$-convergence is established via $\Gamma$-convergence in \cite[Ch.~9]{jostRiemannianGeometryGeometric2017}. Furthermore, existence is shown assuming that $N$ has nonpositive sectional curvature, employing convexity properties of the energy which can be deduced in this case. The curvature condition and further assumptions that are violated by our models introduced in the present paper, are also adopted for related scenarios studied, e.g., in 
\cite{Jost:1997ab,Nishikawa:2002aa,Jost:2009aa,Han:2019aa}. 

Assuming that $M$ is a compact connected Riemannian manifold with non-empty boundary and that $N$ is a complete Riemannian manifold, the paper \cite{Hildebrandt:1977aa} established existence for the corresponding Dirichlet problem, merely assuming a positive upper bound of the sectional curvature of $N$.

A major relevant line of research is based on the \L{}ojasiewicz-Simon gradient inequality \cite{Simon:1983ab,Huang:2006ab}. From this angle, the harmonic map problem has been comprehensively studied by 
\cite{Feehan:2019aa} recently, still assuming that the Riemannian manifold $N$ is closed. The approach requires considerable functional-analytic machinery and a corresponding careful study of the Banach manifold structure of the space of Sobolev $W^{k,p}$ maps.

\subsubsection{Machine Learning}

The recent paper \cite{chamberlain2021beltrami} promotes \textit{graph Beltrami flows} as a proper basis for learning continuous features and evolving  the topology of an underlying graph simultaneously. In particular, the authors consider the approach as general enough to overcome a range of limitations of current state-of-the-art deep \textit{graph neural networks (GNNs)}, motivated by the intimate mathematical connection of GNNs to discretized diffusion equations. In fact, there seems to be a trend in machine learning to reconsider concepts like `message passing', `attention' etc.~from a mathematical viewpoint and their relation to established concepts (nonlinear, non-local diffusion, continuous-time models, state-dependent inner products, etc.), in order to categorize the great variety of GNN architectures proposed in machine learning during the recent years \cite{Han:2023aa}.

The graph Beltrami flow proposed by \cite{chamberlain2021beltrami} considers maps $\mc{V}\to\R^{d}\times\R^{d'}$ of graph nodes $i\in\mc{V}$ to $z_{i}=(u_{i},\alpha x_{i}),\,\alpha>0$, comprising positional encodings $u_{i}$ computed in a preprocessing step and continuous feature vectors $x_{i}$. Defining a discrete gradient operator by finite differences and a discrete divergence operator as adjoint with respect to an inner product, yields the discrete Laplace-Beltrami operator and flow. By constraining the resulting diffusivity, the evolution equation can be written in self-adjoint form and shown to be the gradient flow of a discrete version of the Polyakov action studied in \cite{sochenGeneralFrameworkLow1998}.

\subsection{Contribution and Organization}\label{sec:Contribution}

In this paper, we adopt the mathematical framework of harmonic maps $f\colon M\to N$ in order to extend the assignment flow approach to maps of the form \eqref{eq:f-M-N} and the minimization of a corresponding harmonic energy functional, known as Dirichlet energy in the case of functions $f\colon M\to\R$. The target manifold $(N,g)$ will be the interior of the probability simplex $(\S_c,\fr)$ equipped with the Fisher-Rao metric $\fr$. The domain manifold $(M,h)$ with metric $h$ can be any compact Riemannian manifold. 

\vspace{0.1cm}
Since our scenario violates basic assumptions made in the literature above ($N$ is open with positive sectional curvature, non-metric affine connection) and generalizes the basic harmonic map problem to sigma models, we leave the problem of existence and global convergence (cf.~Section \ref{sec:Harmonic-Maps-Gradient-Flows}) of the gradient flow for future work and solely focus on \textit{geometric} aspects in this paper.  

Throughout the paper, we make the assumptions: $(M,h)$ will be a compact, oriented connected Riemannian manifold without boundary and we consider smooth maps $f\in C^{\infty}(M,N)$.
Specifically, in the case of images, we choose the torus $M=\mb{T}^{2}$ corresponding to the image domain possibly extended by a constant margin, and doubly-periodic boundary conditions, with a metric $h$ induced by data. The compactness assumption ensures the direct application of established results about the spectrum of the Laplace-Beltrami operator, as a basis to devise a Lyapunov functional for the new sigma flow model. This substantiates numerical experiments with $N$-valued sigma flows on a graph embedded in $M$, i.e.~after a discretization of $M$.

The main purpose of this paper is to provide, from the viewpoint of geometric modeling and using the framework of harmonic maps, a \textit{continuous}-domain extension $\vP\colon M\to N$ of assignment flows with a \textit{learnable time-variant metric} $h$ of the domain manifold $M$. Since the metric also depends on the evolving state governed by the sigma flow, compact parametrizations enable decent model expressivity. 

Our new approach, the \textit{sigma flow} model, is general and applies to labeling tasks of data observed on any compact domain manifold $(M,h)$. In addition, we consider our result as a mathematical approach to geometric deep learning, contributing a design principle for the generation of neural ODEs by discretizing sigma flows, that accomplish structured data labelings in natural manner using concepts of information and differential geometry. The similarity of our approach to concepts employed in mathematical physics may be of independent interest.

\vspace{0.2cm}
\noindent
The paper is organized as follows.
\begin{description}
\item[Section \ref{sec:IG-AF}] introduces basic notation, recalls the information geometry of the target manifold $(N,g)=(\S_c,\fr)$ and a particular formulation of the assignment flow 
\cite{Savarino:2021wt} as starting point. The reformulation of the assignment flow approach as nonlinear nonlocal graph PDE \cite{Sitenko:2023aa} characterizes assignment flows as a tool for generating graph-based neural networks for metric data labeling. 
\item[Section \ref{sec:Harmonic-Maps-Geometric-Diffusion}] recalls basic notions related to harmonic maps and the Beltrami flow approach \cite{sochenGeneralFrameworkLow1998}. Variants of this approach are obtained by making the domain metric $h$ dependent on the evolving map. The relation to nonlinear anisotropic diffusion, in particular, is considered in more detail from this viewpoint.
\item[Section \ref{sec:Sigmaflow}] presents our main distribution, the sigma flow model. The general formulation is complemented by concrete implementable expressions using the two basic affine coordinate systems of information geometry. Convergence of the solution are shown under the assumptions stated above about the domain manifold $M$ and the smoothness of mappings $f$. Finally, we consider the entropic harmonic energy functional which turns the sigma flow model into a proper labeling approach.
\item[Section \ref{sec:Experiments}] provides implementation details and few experimental results concerning model expressivity and prediction performance, as proof of concept. 
\item[Section \ref{sec:Conclusion}] Comparisons and structural similarities to the S-flow version of the assignment flow approach (Section \ref{sec:Comparison-S-Flow}) and to transformer network architectures (Section \ref{sec:transformer}), respectively, point out the relevance of the sigma flow model also in a broader context, that we take up and briefly discuss to conclude the paper.
\item[Appendix \ref{appendix}] lists symbols and their definitions.
\end{description}

\section{Information Geometry and Assignment Flows}\label{sec:IG-AF}
We collect a few definitions and set notation which will be used throughout this paper. Appendix \ref{appendix} lists  symbols and their definitions.

\subsection{Basic Notation}

Let $M$ be a closed, oriented, connected smooth manifold of dimension $m$. For a function $f \in C^{\infty}(M)$, we use the shorthand
\begin{subequations}\label{eq:int-M}
\begin{align}\label{eq:int-M-a}
\int_M f &\coloneqq \int_{M} f \w_{h}
\intertext{
with volume measure $\w_{h}$ defined by the metric $h$ and locally given as} 
\w_{h}&=\sqrt{ | h| }\dd x^{1}\dotsb \dd x^{m} 
=: \sqrt{ | h| }\dd x,
\end{align}
\end{subequations}
where $| h | = \det h$ is the determinant of the metric tensor. We often omit the argument $x$ of functions in integrals, like in \eqref{eq:int-M-a}, to enhance the readability of formulae. 
For vector bundles $E$ over $M$, we denote by $\Gamma(E)$ the global sections of $E$. We furthermore use
\begin{equation}
    \Omega^{1}(M,E) = \Gamma\big(T^\ast M \otimes E \big).   
\end{equation}
and $\Omega^{1}(M) = \Omega^{1}(M,M \times \R)$ for one-forms.
We set
\begin{subequations}
\begin{align}
[n] &:= \{1,2,\dotsc,n\},\qquad\forall n\in\N \\
[n]_{0} &:= \{0,1,\dotsc,n\},\qquad\forall n\in\N.
\end{align}
\end{subequations}
Greek indices denote coordinates for $M$ and roman indices coordinates on the specific target manifold $N=\S_c$ (Section \ref{sec:Hessian-Simplex}). In this case, the general Riemannian metric $g$ of $(N,g)=(\S_c,\fr)$ is denoted by $\fr$. Local coordinates on $M$ are denoted by  $x^{\mu },\mu\in[m]$ with coordinate derivative operators 
\begin{equation}
\partial _\mu  \coloneqq  \pdv{x^{\mu }},\qquad\mu\in[m].
\end{equation}
We denote the number of categories (classes, labels) by $c \in \N$ and set 
\begin{equation}\label{eq:def-cc}
\cc \coloneqq c-1. 
\end{equation}
The canonical (natural, exponential) local coordinates on $\S_c$ are denoted by $\theta^{i},\,i\in[\cc]$ with coordinate derivative operators 
\begin{equation}
    \label{eq:def-pdi}
\pd_i \coloneqq \pdv{\theta ^{i}},\qquad i\in[\cc].
\end{equation}
The identity matrix is denoted by
\begin{equation}
\II=\II_{n} = (\delta_{ij})_{i,j\in[n]},
\end{equation}
with the Kronecker symbol $\delta_{ij}$, and with the dimension $n$ indicated as subscript whenever the dimension may not be clear from the context. Angular brackets $\la\cdot,\cdot\ra$ are generically used for denoting inner products, with the symbol $g$ of the metric as subscript in the case of a Riemannian metric $\la\cdot,\cdot\ra_{g}$. The Einstein summation convention is employed throughout this paper. 
For two functions $f,g \in C^{\infty}(M)$, we write
\begin{equation}
    \la \dd f , \dd g \ra_{h} : x \mapsto  h^{\mu \nu }(x) \partial _\mu  f(x) \partial _\nu g(x)
\end{equation}
for the natural pairing of one-forms induced by $h$. 

\subsection{Hessian Geometry of the Probability Simplex}
\label{sec:Hessian-Simplex}

This section defines few basic concepts and notation related to the geometry of the target manifold $(\S_c,\fr)$. 
The probability simplex of categorial distributions is denoted by
\begin{equation}\label{eq:def-Delta-p}
    \triangle_c = \bigg\{p= (p_0,\dots ,p_\cc) \in \R^{c} \colon p_i \ge 0, \sum_{i\in[\cc]_{0}} p_i = 1 \bigg\}.
\end{equation}
Its relative interior is a smooth manifold
\begin{equation}\label{eq:def-Sc}
    \overset{\circ}{\triangle}_c = \bigg\{p=(p_0,\dots ,p_\cc) \in \R^{c} \colon p_i > 0, \sum_{i\in[\cc]_{0}} p_i = 1\bigg\}.
\end{equation}
Each distribution $p\in\S_c$ has full support and governs a discrete $[c]$-valued random variable $Y$ with $p_{i}=\Pr(Y=i),\,i\in[\cc]$ and $p_{0}=\Pr(Y=c)$. The manifold \eqref{eq:def-Sc} is covered by the two single coordinate charts $\chi_{e}$ and $\chi_{m}$ given by
\begin{subequations}\label{eq:def-kappa-chart}
\begin{align}
    \chi_{e}: \S_c &\to \R^{\cc}, &
    p = (p_0 , \dots , p_\cc) &\mapsto 
    \theta = \Big(\log \frac{p_1}{p_0} , \dots  ,\log \frac{p_\cc}{p_{0} }\Big), 
\label{eq:def-kappa-chart-e} \\ \label{eq:def-kappa-chart-m}
    \chi_{m} : \S_c &\to \bigg\{p \in \R^\cc \colon p_i > 0, \sum_{i\in[\cc]} p_i <1\bigg\}, & 
    p =(p_0, \dots ,p_\cc) &\mapsto (p_1,\dots ,p_\cc). 
\end{align}
\end{subequations}
Denoting the corresponding coordinate functions by
\begin{subequations}\label{eq:def-theta-p}
\begin{align}
    \theta^i : \S_c &\to \mathbb{R},  &
    p_i : \S_c &\to \mathbb{R}, &
    i &\in[\cc],
\intertext{one has}
\theta^{i}(p) &= \log \frac{p_i}{1- \sum_{j\in[\cc]}p_j}, &
p_i(\theta) &= \frac{e^{\theta^{i}}}{1 + \sum_{j\in[\cc]} e^{\theta^{j}}}, & 
i &\in [\cc].
\end{align}
\end{subequations}
The use of sub- and superscripts here is intentional: if $\theta^{i},\,i\in[\cc]$ transform contravariantly, then $p_{i},\,i\in[\cc]$ transform covariantly. Consequently, $p_{i}\dd\theta^{i}$ is a differential one-form, which turns out to be exact 
\begin{equation}
    \dd \psi = p_i \dd \theta ^{i},
\end{equation} 
with the potential given by the 
log-Laplace transform (partition function)
\begin{equation}\label{eq:def-psi-partition}
\psi : \S_c \to \R, \qquad 
\psi(\theta) = \log\bigg( 1+ \sum_{j\in[\cc]}e^{\theta^j}\bigg).
\end{equation}
The Legendre transform yields as conjugate potential
\begin{equation}
\vphi := \psi^{\ast}
\end{equation}
the negative entropy
\begin{equation}
\label{eq:HH}
\varphi : \S_c \to \R,\qquad
\varphi(\theta) = \sum_{j\in[\cc]}e^{\theta^j - \psi(\theta)} \theta^j  - \psi(\theta) = p_j \theta^j - \psi(\theta).
\end{equation}
Basic relations include
\begin{subequations}
\begin{align}
\label{eq:partial-psi}
\pd_{i}\psi 
&= p_i = e^{\theta^{i} - \psi}, & i&\in[\cc],
\\ \label{eq:partial-vphi}
\pd_{i}\vphi
&= \fr_{ij}\theta^{j}, & i&\in[\cc],
\intertext{with the metric tensor of the Fisher-Rao metric} 
\label{eq:fr_metric}
\fr_{ij} &= \pd_{i} \pd_{j} \psi = \pd_{j} p_{i}
= \delta_{i j} p_{i} - p_{i} p_{j},
& i,j &\in [\cc].
\end{align}
\end{subequations}
The Christoffel symbols of the Levi-Civita (metric, Riemannian) connection with respect to the $\theta$ coordinates read
\begin{equation}
\label{eq:gamma0}
\Gamma^{i}_{jk} = 
\frac{1}{2} \fr^{il}\pd_{l}\pd_{j}\pd_{k} \psi = \frac{1}{2}(\delta^{i}_{j}\delta_{jk} - \delta^i_j p_k - \delta^i_k p_j),
\end{equation}
where $\delta^{i}_{j}\equiv\delta_{ij},\forall i,j$. 
The Christoffel symbols of $\alpha$-connection in $\theta$-coordinates are then given by \cite{amariMethodsInformationGeometry2000}
\begin{subequations}\label{eq:def-alpha-Gamma}
\begin{align}
    \,^{\alpha}\Gamma ^{i}_{jk} &= \frac{1-\alpha}{2} \,^{-1}\Gamma^{i}_{jk} + \frac{1+\alpha}{2} \,^{1}\Gamma^{i}_{jk} 
    \\
    &=  \frac{1-\alpha}{2} \,^{-1}\Gamma ^{i}_{jk} = (1-\alpha)\Gamma ^{i}_{jk}, & i,j,k &\in [\cc],
\intertext{with}
\frac{1}{2} \,^{-1}\Gamma^{i}_{jk} 
&= \Gamma^{i}_{jk}, & i,j,k &\in [\cc].
\end{align}
\end{subequations}

\subsection{S Flows}\label{sec:s-flows}
As briefly reported in Section \ref{sec:related-PDE-AF}, assignment flows provide a framework for labeling metric data observed on graphs, at every node, utilizing the geometric structure of the simplex $\triangle_{c}$ defined above. We confine ourselves to a particular parametrization of assignment flows, called \textit{S Flows} \cite[Section 3.2]{Savarino:2021wt}.

Let $\mc{G}_{\Omega} = (\mc{V},\mc{E},\Omega)$ be an undirected weighted graph with $\mc{V}=[N],\; N\in\N$ 
and non-negative weight matrix
\begin{equation}\label{eq:omega-s}
    \Omega \in \R^{N \times N}, \quad \Omega_{ab}\geq 0,\quad \forall a,b \in [N],
\end{equation}
which is symmetric and supported on the edges
\begin{equation}
    \Omega_{ab} = \Omega_{ba} \qquad\text{and}\qquad
    \Omega_{ab} = 0 \qq{if} (a,b) \notin \mathcal{E}.
\end{equation}
The S flow is a dynamical system evolving on the \textit{assignment manifold}
\begin{equation}
    \label{eq:def-mcW}
\big(\mc{W}^{N}_c,\fr\big),\qquad
\mc{W}^{N}_c := \underbrace{\S_c\times\dotsb\times\S_c}_{N \;\text{factors}}
\end{equation}
with the Fisher-Rao product metric $\fr$ defined factorwise by \eqref{eq:fr_metric}. Elements $\fS$ of the assignment manifold $\mathcal{W}^{N}_c$ are conveniently represented by \textit{assignment matrices} $\fS \in \R^{N \times c}$ with rows $\fS_{a} \in \S_c,a \in [N]$.
The S flow is the Riemannian gradient descent flow corresponding to the objective function
\begin{equation}
    \label{eq:def-JJ}
\mc{J}: \mathcal{W}^{N}_c  \to \R, \qquad  
    \mc{J}(\fS) := - \frac{1}{4}\sum_{a,b\in [N]} \Omega_{ab} \norm{\fS_a - \fS_b}^2 + \frac{1}{2} \sum_{a \in [N]}  \norm{\fS_a}^2
\end{equation}
given by the equation
\begin{equation}
    \label{eq:def-s-flow-0}
\dot \fS = -\ggrad \mc{J}(\fS),\qquad \fS(0) = \msf{P}.
\end{equation}
The Riemannian gradient can be specified explicitly with some more notation.
For a given assignment matrix $\fS$ we define the \textit{replicator tensor} $R_{\fS} \in \R^{N \times c \times c}$ with entries
\begin{equation}\label{eq:replicator-tensor}
    (R_{\fS})_a = \mathrm{Diag}(\fS_a) - \fS_a \otimes \fS_a \in \R^{c\times c}, \quad a\in N. 
\end{equation}
The tensor $R_{\fS}$ acts on elements of $\R^{N \times c}$ by row-wise matrix multiplication, projecting to the tangent space $T \mathcal{W}^{N}_c \subset \R^{N \times c}$. For $Y \in \R^{N\times c}$, we write
\begin{equation}
    R_\fS Y \in T\mathcal{W}^{N}_c \qq{with rows}
    (R_\fS Y)_a = (R_\fS)_a Y_a. 
\end{equation} 
Furthermore, we denote by 
\begin{equation}\label{eq:Omega-L}
    \fL_{\Omega} \coloneqq \Omega - \II_{N} \in \R^{N \times N}
\end{equation}
the $\Omega$-induced graph-Laplacian, acting on assignment matrices by matrix multiplication. 
Using this notation, the gradient of $J$ takes the form
\begin{equation}
    -\ggrad J(\fS) = R_S(\fL_\Omega \fS + \fS).
\end{equation}
The \textbf{S flow}
\begin{equation}
    \label{eq:def-s-flow}
    \dot \fS  = R_\fS (\fL_{\Omega}\fS + \fS)  
\end{equation}
is thus as a dynamical system parametrized by the weight matrix $\Omega$. Under mild conditions on $\Omega$ \cite{Zern:2020aa} and sufficiently large $T>0$, $\fS(T)$ satisfies the entropy constraint
\begin{equation}
    \label{eq:entropy-eps}
    H\big(\fS_{a}(T)\big) < \veps,\qquad\forall a\in\mc{V},
\end{equation}
where $H = -\varphi \colon\S_c\to\R$ denotes the categorical entropy function, see \eqref{eq:HH}. This implies that at every node $i \in \mc{V}$, the corresponding row $\fS_{a}(T) \in \S_c$ of the assignment matrix is very close to a unit vector which uniquely assigns the corresponding label to given feature data $f \colon \mc{V} \to \mc{F}$, where $\mc{F}$ is any metric space and $f$ is encoded by the initial point $\msf{P}$ of \eqref{eq:def-s-flow-0}.
If the weights $\Omega$ are allowed to be adjusted, the S flow ODE \eqref{eq:def-s-flow} may be interpreted as a \textit{neural ODE} \cite{chen2018neural}, where the weights $\Omega$ can be learned from data. Among other choices \cite{Boll:2024aa}, $\Omega$ could be parametrized by a deep neural network, as demonstrated e.g.~in \cite{Boll:2023ab}.

More abstractly, we can think of \eqref{eq:def-s-flow} as being parametrized by the Laplace-operator $\fL_{\Omega}$ rather than by the weights themselves. This perspective is useful regarding generalizations of this formalism, due to the plethora of different structures admitting Laplacian operators, including simplicial complexes \cite{desbrun2005discrete,lim2020hodge}, meshes \cite{Gu:2002aa} and manifolds \cite{jostRiemannianGeometryGeometric2017}. This general perspective on the S flow is the departure point of the present paper.  The goal is to take advantage of the combination of ideas related to data labeling based on assignment flow architectures, with concepts from geometric data processing based on the \textit{manifold hypothesis} \cite{Fefferman:2016aa}.

To this end, the paper \cite{Savarino:2021wt} provides a natural starting point for our work, where a continuum limit for the S flow was proposed, replacing the graph $\mathcal{G}$ by an open domain $X \subset \R^{d}$ for some $d \in \N$. 
The objective function to be minimized is then replaced by the \textbf{S flow energy}
\begin{equation}\label{eq:cont-domain-s-flow}
    E_S : C^{\infty}(X,\S_c) \to \R, 
    \qquad 
    E_S(\vP) = \frac{1}{2}\int_X (\norm{\partial \vP}^2 - \norm{\vP}^{2}). 
\end{equation}  
Numerical optimization of this functional allows to perform data labeling similar to the S flow. 
In Section \ref{sec:Sigmaflow}, we explore how this approach can be generalized when a Riemannian manifold $(M,h)$ is considered instead of a Euclidean domain $X$, with a metric $h$ depending on data.


\section{Harmonic Maps and Geometric Diffusion}\label{sec:Harmonic-Maps-Geometric-Diffusion}
\subsection{Riemannian Harmonic Maps}
\label{sec:Riemann-Harmonics}
Harmonic maps originate in differential geometry when minimizing the energy of functions as defined below. For background and further reading, we refer to \cite{Helein:2008aa}, \cite[Ch.~9]{jostRiemannianGeometryGeometric2017}. 

Let 
\begin{equation}\label{eq:Mh-Ng-general}
(M,h)\qquad\text{and}\qquad (N,g)
\end{equation}
denote smooth, oriented Riemannian manifolds without boundary of dimensions $m$ and $n$, respectively. Furthermore, let $M$ be compact.
For a function $f \in C^{\infty}(M,N)$ with coordinate functions $f^{i},i=1,\dots , n$, its differential 
\begin{equation}\label{eq:df-space}
\dd f\in \Omega^{1}(M,f^{\ast}TN)
\end{equation}
is a section of the vector bundle $T^{\ast} M\otimes f^{\ast}T N$ of $1$-forms with values in the pullback bundle $f^{\ast}TN$ over $M$. The latter is equipped with the pullback metric $f^* g$ whereas $T^{\ast}M$ carries the metric $
h^{-1}$. Thus, denoting the corresponding metric by
\begin{equation}
\la\cdot,\cdot\ra := \la\cdot,\cdot\ra_{T^{\ast}M\otimes f^{\ast}TN},
\end{equation}
one locally has with the induced norm $\|\cdot\|=\sqrt{\la\cdot,\cdot\ra}$,
\begin{equation}
\|\dd f\|^{2}\in C^{\infty}(M),\qquad
\|\dd f\|^{2}(x) = h^{\mu \nu}(x) \partial_\mu  f^{i}(x) \partial_\nu f^{j}(x) g_{ij}\big(f(x)\big).
\end{equation}
We call the functional
\begin{equation}\label{eq:def-EH-general}
E_{H}\colon C^{\infty}(M,N)\to\R,\qquad
E_{H}(f) = \frac{1}{2}\int_{M}\|\dd f\|^{2}
\end{equation}
the \textit{harmonic energy} of $f$. Let $f_{t}\colon M\times (-\veps,\veps)\to N$ denote a smooth one-parameter family of variations of $f_{0}=f$, then the first variation of the harmonic energy is given by
\begin{equation}\label{eq:def-EH}
(\delta E_{H})(f)[v] := \frac{\dd}{\dd t} E_{H}(f_{t})\big|_{t=0}
= -\int_{M}\la\tau(f; g,h),v\ra \w_{h},\qquad
v = \frac{\partial  f_{t}}{\partial t}\Big|_{t=0},
\end{equation}
with the \textbf{tension field} of $f$ given by
\begin{equation}
\label{eq:tension_field}
\tau(f; g,h) := \tr\nabla\dd f,
\end{equation}
where $\nabla$ denotes the induced connection on $T^{\ast}M\otimes f^{\ast}T N$. 
Critical points $f$ of $E_{H}$ are called \textit{harmonic maps}. The corresponding Euler-Lagrange equations $\tau(f; g,h)=0$ are more explicitly given by
\begin{equation}\label{eq:tension-f-explicitly}
0 = \Delta_{h} f^{i}(x) + \Gamma^{i}_{jk}\big(f(x)\big) \partial_{\mu } f^{j}(x) \partial _\nu f^{k}(x) h^{\mu \nu}(x), \quad x \in M,\quad i\in[n],
\end{equation}
with the Christoffel symbols $\Gamma^{i}_{jk}$ associated with the metric $g$ on $N$ and the Laplace-Beltrami operator on $(M,h)$ given by
\begin{equation}\label{eq:delta-EH}
\Delta_h f 
= \frac{1}{\sqrt{|h|}} \partial_\mu ( \sqrt{|h|} h^{\mu \nu} \partial _\nu f),\qquad f\in C^{\infty}(M).
\end{equation}
The dependency of the tension field \eqref{eq:tension_field} on the metrics $g$ and, in particular, on $h$ of \eqref{eq:Mh-Ng-general} will be key ingredients of models considered in Sections \ref{sec:Variants}, \ref{sec:AID} and \ref{sec:Sigmaflow}.

\begin{remark}[sign convention of $\Delta_h$]
    \label{rem:sign-laplace}
    The sign convention used here agrees with the one from \cite[Eq.~(38)] {sochenGeneralFrameworkLow1998} and \cite[Section I.1, Eq.~(33)]{chavelEigenvaluesRiemannianGeometry1984}, which is opposite to the convention in \cite[pp.~18]{Rosenberg:1997aa}, \cite[pp.~496]{jostRiemannianGeometryGeometric2017}. Consequently, the operator $\Delta_h$ is \textit{negative semi-definite} with a non-positive spectrum. 
\end{remark}

\begin{remark}[harmonic maps in theoretical physics]\label{rem:theo-phys-sigma}
    We briefly comment on the role of harmonic maps in theoretical physics. 
    Physical theories describing maps $f:M\to N$ between manifolds $M,N$ are generally referred to as \textit{sigma models} \cite[pp.~146]{Hori:2003aa} (the nomenclature is due to historical reasons), where it is often assumed that the metric $h$ is not fixed but dynamical, however. The harmonic energy functional \eqref{eq:def-EH-general} is known as the \textit{non-linear sigma model action with target $N$} in the context of quantum field theory and string theory \cite[Sec.~3.7]{Polchinski:1998rq}. 
    A distinguished member of the family of non-linear sigma models is the \textit{Polyakov action} which assumes the special case $N = \R^{d}$ with the Euclidean metric or Lorentzian pseudo-metric. This case is of great interest in bosonic string theory as laid out in \cite[Sec.~1.2]{Polchinski:1998rq} and \cite{polyakovQUANTUMGEOMETRYBOSONIC1981}, allowing for tractable quantization.
\end{remark}

\subsection{Beltrami Flow and Variants}\label{sec:Betrami-Variants}
\subsubsection{Beltrami Flow}
The \textit{Beltrami flow approach} \cite{sochenGeneralFrameworkLow1998} considers images as mappings from surfaces to the RGB color space: A given image array $\mathsf{f} \in \R^{N\times 3}$ arises as discretization of a mapping $f : M \to \R^{3}$ where, for simplicity, we assume $M$ to be a smooth closed two-dimensional manifold. A basic example are images on a torus $M = \mb{T}^{2}$ with periodic boundary conditions. 

Choosing Riemannian metrics 
\begin{equation}
(M,h)\qquad\text{and}\qquad (\R^{3},g)
\end{equation}
yields an instance of the harmonic map setting \eqref{eq:Mh-Ng-general} and one may consider the harmonic energy $E_{H}(f)$ of $f$ given by \eqref{eq:def-EH}. The Beltrami flow approach amounts to process $f$ by minimizing $E_{H}$ and to integrate the corresponding gradient descent equation. Setting for a given $T > 0$
\begin{equation}\label{eq:def-MT}
    M_{T} \coloneqq (0,T) \times  M
\end{equation}
and for $u \in C^{\infty}(M_T,\R^{3})$ and $t \in (0,T)$ fixed, we write 
\begin{equation}\label{eq:def-ut}
    u_{t}=u(t,\cdot)\in C^{\infty}(M,\R^{3}), \qq{and} u_{0} := \lim_{t \to 0^{+}} u(t,\cdot).
\end{equation}
The \textbf{basic Beltrami flow} system reads
\begin{equation}
    u \in C^{\infty}(M_{T},\R^{3}) \qq{s.t.} 
    \begin{dcases}
        \partial_t u_t = \tau(u_t; g,h), \quad  \forall  t \in (0,T), \\  
        u_0  = f, 
    \end{dcases}
\end{equation}
with $\tau$ given by \eqref{eq:tension_field}. A common choice is the Euclidean metric 
\begin{equation}\label{eq:color-g-delta-metric}
    g_{ij}=\delta_{ij}
\end{equation}
for the color space. See, e.g., \cite{Resnikoff:1974aa,Provenzi:2016aa} for color spaces that better conform to human color perception.
    
\subsubsection{Variants: Dynamic Metrics}\label{sec:Variants}
Variants of the Beltrami flow approach result from coupling the metric $h$ and the function $u$ via a differential equation. Instead of a single metric $h$, we consider a family of metrics $h_t$ depending on $u_t, t \in (0,T)$. A natural choice is the system 
\begin{equation}\label{eq:Beltrami-ht}
    u \in C^{\infty}(M_{T},\R^{3}) \qq{s.t.} 
    \begin{dcases}
        \partial_t u_t = \tau(u_t; g, h_{t}), \quad & \forall t > 0, \\  
        h_t = u_t^{*} \delta, \quad & \forall t > 0,\\ 
        u_0  = f,
    \end{dcases}
\end{equation}
with $h_{t}$ is determined by pulling back the Euclidean metric \eqref{eq:color-g-delta-metric} via $u_{t}$. A particular case concerns mappings
\begin{equation}
f\colon M\to\R^{3},\qquad
f(x) = f(x_{1},x_{2}) = \big(x_{1},x_{2}, k(x_{1},x_{2})\big)
\end{equation}
defined as graph of a function $k\colon M\to\R$. Then $u_{t}$ solving \eqref{eq:Beltrami-ht} defines a family of surfaces governed by the \textbf{mean curvature flow} equation
\begin{equation}\label{eq:def-MCF}
    \partial _t u_t = 2 \mc H(u_t) \mc N(u_t),
\end{equation}
where $\mc{H}(u_{t})$ is the Gaussian mean curvature and $\mc{N}(u_{t})$ is the unit normal to the surface defined by $u_t$. For a derivation of this equation, see \cite[Sec. 4.3]{sochenGeneralFrameworkLow1998}. 
We refer, e.g., to \cite{Weickert:1998aa,Smets:2021tw} for further reading, to \cite{Morel:2015aa} for connections to local adaptive filtering, to \cite{Garcke:2013aa} for connections to other areas of applied mathematics, and to  \cite[Sec. 1.2 and 3.7]{Polchinski:1998rq} for relations to theoretical physics.

\subsection{Anisotropic Image Diffusion}\label{sec:AID}
The anisotropic diffusion approach to image processing promoted by Weickert \cite{Weickert:1998aa} adopts a somewhat complementary viewpoint. We briefly elucidate differences to, and common aspects with, the Beltrami flow approach.

For the basic case of a gray value image function $f\colon \R^{2}\supset\Omega\to\R$, the system of \textbf{anisotropic diffusion equations} reads
\begin{equation}\label{eq:AnisDiff}
    u \in C^{\infty }(\Omega_{T}) \qq{s.t.} 
    \begin{dcases}
        \partial_t u_t = \mathrm{div}(V_{t}\nabla u_t) , \quad & \forall  0 < t < T, \\  
        V_t = \mc{O}(u_t) ,\quad & \forall t > 0,\\ 
        u_0  = f, 
    \end{dcases}
\end{equation}
with partial differential operator $\mc{O}$ and a matrix-valued \textit{diffusion tensor} $V\colon\Omega_{T}\to\R^{2\times 2}$ satisfying a uniform positive definiteness constraint $\mc{O}(f) \succeq C(\mc{O})\II$, for all $f \in C^{\infty}(\Omega)$ and a positive constant $C(\mc{O})$. The class of operators $\mc{O}$ considered in \cite{Weickert:1998aa} have the form
\begin{subequations}\label{eq:AnisDiff-def-mcL}
\begin{align}
    (\mc{O}(f))(x) 
    &= D\big( K_{\rho} * ( \nabla f_{\sigma}  \otimes \nabla f_{\sigma}  )(x)\big) 
    \\ \label{eq:D-ast-delta}
    &= D \big(K_{\rho} * (f_{\sigma}^{\ast} \delta )(x)\big),
    \qquad\qquad
   f_{\sigma} := K_{\sigma}\ast f,\qquad
   \rho,\sigma>0,
 \end{align}
\end{subequations}
where $D\in C^{\infty}(\R^{2\times 2},\R^{2\times 2})$ preserves symmetry and positive definiteness of the matrix argument, $\ast$ denotes spatial convolution and $K_{\rho},K_{\sigma}$ are lowpass (typically: Gaussian) filter kernels at scales $\rho$ and $\sigma$, respectively.

The approach \eqref{eq:AnisDiff} has a more narrow scope in that possible manifold structures on $\Omega$ are ignored. On the other hand, in view of the second equation of \eqref{eq:AnisDiff} governing $V_{t}$, Equation \eqref{eq:D-ast-delta} generalizes the role of $h_{t}$ in \eqref{eq:Beltrami-ht}.

In order to bring the anisotropic diffusion approach closer to the Beltrami flow approach, we introduce an additional positive warp factor $\kappa\colon\Omega_{T}\to\R_{>}$ and define the \textbf{warped anisotropic diffusion (WAD)}, which reads
\begin{equation}\label{eq:WAD-system}
    u_{t} \in C^{\infty }(\Omega_{T}) \qq{s.t.} 
    \begin{dcases}
        \partial_t u = \kappa_t \mathrm{div}(V_{t} \nabla u_t) , \quad & \forall t > 0, \\  
        V_t = \mc{O}(u_t) ,\quad & \forall t > 0,\\ 
        u_0  = f. 
    \end{dcases}
\end{equation}
If we replace the Euclidean domain $\Omega$ by a domain manifold $M$, then the Beltrami flow becomes a special case of the WAD with
\begin{equation}
    V_{t}   = h_{t}^{-1}\sqrt{ |h_t| }  \qq{and} \kappa_t = \frac{1}{\sqrt{ |h_t|}},
\end{equation}
assuming the matrices $h_{t}\succ 0$ are symmetric and positive definite. This characterizes the Beltrami flow as WAD with a diffusion tensor that has a unit determinant, and it enables to exploit established numerical methods for anisotropic diffusion after choosing a coordinate system on $M$.

Alternatively, we may generalize the Beltrami flow system to produce WAD systems with more general diffusion tensors. For a $C^{1}$ function $B\colon\R\to\R$ with $B'>0$, consider the generalized harmonic energy
\begin{equation}
    \label{eq:def-EB}
E_{B}\colon C^{\infty}(M,N)\to\R,\qquad
E_{B}(f) := \frac{1}{2}\int_{M} B(\|\dd f\|^{2}).
\end{equation}
    Calculating the functional derivative yields
    \begin{equation}
        (\delta E_B)(f)[v] = - \int_M \Braket{\tau_B(f; g,h),v}, \quad v = \pdv{f_t}{t}\Big|_{t=0},
    \end{equation}
    where $\tau(f; g,h) \in \Gamma (f^{\ast} TN)$ is locally given by
    \begin{equation}\label{eq:tau-Beltrami-WAD}
        \tau_B(f; g,h)^{i} = 
        \frac{1}{\sqrt{|h|}} \partial _\mu \qty(B^{\prime} (\|\dd f\|^2) \sqrt{ |h| } \, h^{\mu \nu } \partial _\nu  f ^{i}) + B^{\prime} (\|\dd f\|^2) \Gamma ^{i}_{jk} \partial _\mu f^{j} \partial _\nu  f^{k} h^{\mu \nu },\qquad i\in[n].
    \end{equation}
Choosing specifically $(N,g)=(\R^{n},\delta)$, we obtain as a special case of \eqref{eq:tau-Beltrami-WAD}
\begin{equation}
    \tau_B (f; g,h)^{i} = \frac{1}{\sqrt{|h|}} \partial _\mu \qty(B^{\prime} (\|\dd f\|^2) h^{\mu\nu} \sqrt{|h|} \,\partial _\nu  f ^{i}),\qquad i\in[n].
\end{equation}
The corresponding gradient descent system 
\begin{equation}\label{eq:WAD-LB}
    u \in C^{\infty }(M_{T}) \qq{s.t.} 
    \begin{dcases}
        \partial_t u_t = \tau_B(u_t; g,h_{t}), \quad & \forall t \in (0,T), \\  
        h_t = \mc{O}(u_t) ,\quad & \forall t \in (0,T),\\ 
        u_0  = f. 
    \end{dcases}
\end{equation}
Further restricting $M$ to an open domain $\Omega$ yields again a system of the form \eqref{eq:WAD-system}, but with
\begin{equation}
\det V_{t} = B'(\|\dd u_{t}\|^{2}).
\end{equation}
Prominent special cases include \textbf{Perona-Malik denoising} 
\cite{peronaScalespaceEdgeDetection1990,Kichenassamy:2008aa} with
\begin{equation}\label{eq:PM-ht}
    h_t = \delta, \qquad  B^{\prime}(x) = e^{-x} \qq{or} B^{\prime} (x) = \frac{1}{1 + x}
\end{equation}
and \textbf{total variation denoising} \cite{rudinNonlinearTotalVariation1992,chambolleAlgorithmTotalVariation2004} with 
\begin{equation}
    h_t = \delta, \qquad  B(x)= 2\sqrt{x}. 
\end{equation}
This demonstrates the versatility of the Beltrami flow approach and its variants for representing a range of established methods of PDE-based image processing.

\vspace{0.5cm}
We conclude this section by pointing out two more useful properties of the Beltrami flow approach.
\begin{description}
\item[Reparametrization invariance.] The harmonic energy $E_{H}$ \eqref{eq:def-EH-general} is invariant under coordinate transformations, i.e.~under reparametrizations of the domain manifold $M$. This provides a basis for coordinate-invariant discretization of the PDE system $\partial_{t} u_{t} = \tau(u_t)$.  We refer, e.g., to \cite{Gu:2002aa,Wardetzky:2008aa} for related work.
\item[Conformal invariance.] Assume $m=\dim M = 2$. A \textit{conformal transformation} is a rescaling 
\begin{equation}\label{eq:conformal-transformation}
h\to \lambda h,\qquad
\lambda\colon M\to\R_{>0}
\end{equation}
of the metric $h$ on $M$ with respect to some positive function $\lambda$. 
Writing more explicitly $E_{H}(f) = E_{H}(f; g,h)$ for the harmonic energy of the map $f\colon M\to N$ between the Riemannian manifolds $(M,h)$ and $(N,g)$, \textit{conformal invariance} of $E_{H}$ means
\begin{equation}
E_{H}(f;g,h) = E_{H}(f;g,\lambda h),
\end{equation}
which in terms of the functional derivative translates to
\begin{equation}
\delta E_{H}(f;g,\lambda h) = \lambda\delta E_{H}(f;g,h).
\end{equation}
The consequence for the associated diffusion process is
    \begin{equation}
        \partial u_{t} =  \tau(u_{t};g,h) = \frac{1}{\lambda}\tau(u_{t};g,\lambda h),
    \end{equation}
which concerns discretization. Setting $\partial_{t} u\approx \frac{u_{t+\eta}-u_{t}}{\eta}$ with step size $0 < \eta \ll 1$, one has
\begin{equation}
\frac{u_{t+\eta}-u_{t}}{\eta} 
\approx \frac{1}{\lambda}\delta \tau(u_{t};g,\lambda h)
\qquad\gdw\qquad
u_{t+\eta}\approx u_{t} + \frac{\eta}{\lambda}\delta \tau(u_{t};g,\lambda h).
\end{equation}
This shows that the time scale $\eta$ used for discretization is entangled with the scale $\lambda$ of the metric $h$. Since $\lambda$ is a function varying over the domain $M$, this also introduces a \textit{spatially resolved} time scale for discretization. 
\end{description}

\section{Sigma Flow Model}\label{sec:Sigmaflow}
This section presents the main contribution of the paper, the \textit{sigma flow model} for labeling metric data on a smooth compact, oriented closed manifold $M$ equipped with a Riemannian metric $h$. This is achieved by combining the Beltrami flow and the assignment flow frameworks. Regarding image segmentation, the sigma flow model differs from the methodology presented in \cite{sochenGeneralFrameworkLow1998} in that it works for multiple classes and is an inherently geometric approach to data \textit{labeling}.

Section \ref{sec:Harmonic-Simplex} details the Beltrami flow approach for the specific choice 
\begin{equation}
(N,g)=(\S_c,\fr)
\end{equation}
as target manifold equipped with the Fisher-Rao metric and simplex-valued mappings 
\begin{equation}
\vP: M \to \S_c. 
\end{equation}
Section \ref{sec:Regularized-Sigmaflow} introduces the sigma flow model and shows that is constitutes a proper geometric diffusion approach. The extension of the sigma flow model from the metric connection to the $\alpha$-family of connections from information geometry is worked out in Section \ref{sec:alpha-SigmaFlow}. Finally, by additionally taking into account an entropic potential in Section \ref{sec:Entropic-Potential}, the sigma flow model becomes a proper labeling approach. 

This version of the novel sigma flow model for data labeling bears resemblance to basic models of mathematical physics (cf.~Remarks \ref{rem:theo-phys-sigma} and \ref{rem:potentials-in-physics}) and constitutes the natural geometric extension of the continuous-domain formulation of the assignment flow approach presented by \cite{Savarino:2021wt}.

\subsection{Harmonic Energy of Probability Simplex-Valued Mappings}\label{sec:Harmonic-Simplex}

Using the notation of Section \ref{sec:Riemann-Harmonics}, we consider the harmonic energy
\begin{equation}\label{eq:def-EH-simplex}
    E_H(\vP) = \frac{1}{2}\int _M \norm{\dd \vP}^{2}. 
\end{equation} 
The corresponding Beltrami flow is generated by the system
\begin{equation}\label{eq:Bf-simplex}
    \vS \in C^{\infty }(M_T,\S_c) \qq{s.t.} 
    \begin{dcases}
        \partial_t \vS_{t} = \tau(\vS_{t};\fr,h_{t}), \quad & \forall t > 0, \\  
        h_{t} = \mc{O}(\vS_{t}) ,\quad & \forall t > 0,\\ 
        \vS_{0}  = \mfk{P}, 
    \end{dcases}
\end{equation}
with a differential operator $\mc{O}$ specified later and initial condition $\mfk{P} \in C^{\infty}(M,\S_c)$. We use both local coordinates
\begin{equation}
    \label{eq:def-vt-vp}
\theta^{1},\dotsc,\theta^{\cc}
\qquad\text{and}\qquad
p_{1},\dotsc,p_{\cc} 
\end{equation}
of $\S_c$ introduced in Section \ref{sec:Hessian-Simplex}. The corresponding coordinate expressions of $\vS$ are denoted by 
\begin{equation}
\vartheta^{1},\dotsc,\vartheta^{\cc}
\qquad\text{and}\qquad
\vp_{1},\dotsc, \vp_{\cc}, 
\end{equation}
respectively.
\begin{proposition}[harmonic energy on $\S_c$]\label{prop:EH-simplex}
    For a smooth map $\vP : M \to \S_c$ with local $\theta$-coordinate functions $\vartheta^{i},i\in[\cc]$ and $p$-coordinate functions $\vp_i,\, i\in[\cc]$, the harmonic energy \eqref{eq:EH-simplex} evaluates to
    \begin{equation}
        \label{eq:EH-simplex}
        E_H(\vP) = \frac{1}{2} \int _M \fr_{ij} \la \dd \vartheta ^{i} , \dd  \vartheta^{j} \ra_{h} 
        =    \frac{1}{ 2}  \int_M  \la \dd \vp_i,  \dd  \vartheta ^{i} \ra_{h},  
    \end{equation}    
    with respect to a Riemannian metric $h$ on $M$. 
\end{proposition}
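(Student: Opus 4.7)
The proof is essentially an unpacking of the definition \eqref{eq:def-EH-general} of the harmonic energy in the specific local coordinate systems provided on $\S_c$, combined with the duality \eqref{eq:partial-psi}--\eqref{eq:fr_metric} between the two coordinate systems. I would proceed in two short steps.

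First, I would write $\|\dd\vP\|^{2}$ in $\theta$-coordinates on the target. Since the target metric components in these coordinates are precisely $\fr_{ij}$, the general local formula from Section \ref{sec:Riemann-Harmonics} gives
\begin{equation*}
\|\dd\vP\|^{2}(x) = h^{\mu\nu}(x)\,\partial_{\mu}\vartheta^{i}(x)\,\partial_{\nu}\vartheta^{j}(x)\,\fr_{ij}\bigl(\vP(x)\bigr) = \fr_{ij}\bigl(\vP(x)\bigr)\,\la \dd\vartheta^{i},\dd\vartheta^{j}\ra_{h}(x),
\end{equation*}
which, after integration against $\w_{h}$, is the first claimed expression for $E_{H}(\vP)$.

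Second, I would convert this to the mixed $(\theta,p)$-form using \eqref{eq:fr_metric}, which states $\fr_{ij} = \pd_{j} p_{i}$. Applying the chain rule to $\vp_{i} = p_{i}\circ\vP$ yields
\begin{equation*}
\partial_{\mu}\vp_{i} = \pd_{j}p_{i}\bigl(\vP\bigr)\,\partial_{\mu}\vartheta^{j} = \fr_{ij}\bigl(\vP\bigr)\,\partial_{\mu}\vartheta^{j},
\end{equation*}
so contracting with $h^{\mu\nu}\partial_{\nu}\vartheta^{i}$ gives $\fr_{ij}\la \dd\vartheta^{i},\dd\vartheta^{j}\ra_{h} = \la \dd\vp_{i},\dd\vartheta^{i}\ra_{h}$, establishing the second equality.

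There is no real obstacle here; the statement is a direct consequence of the information-geometric duality encoded in \eqref{eq:partial-psi}--\eqref{eq:fr_metric}. The only point worth remarking on is the notational gymnastics involved in distinguishing the coordinate functions $\theta^{i},p_{i}$ on $\S_c$ from their compositions $\vartheta^{i} = \theta^{i}\circ\vP$ and $\vp_{i} = p_{i}\circ\vP$ on $M$, and in keeping track of where $\fr_{ij}$ is evaluated (namely at $\vP(x)$) when it appears under the integral over $M$. Once this is set up cleanly, the calculation is a one-line application of the chain rule.
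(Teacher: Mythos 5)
Your proposal is correct and follows essentially the same route as the paper's proof: express $\|\dd\vP\|^{2}$ in $\theta$-coordinates where the target metric is $\fr_{ij}$, then use $\fr_{ij}=\pd_{j}p_{i}$ from \eqref{eq:fr_metric} together with the chain rule $\partial_{\mu}\vp_{i}=\fr_{ij}\partial_{\mu}\vartheta^{j}$ to obtain the mixed form. The only detail the paper makes explicit that you leave implicit is the use of the symmetry of $\fr_{ij}$ (equivalently of $h^{\mu\nu}$) to match the index placement when contracting, which is immediate.
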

\begin{proof}
By \eqref{eq:fr_metric}, one has $\fr_{ij}=\pd_{j}\vp_{i}$ and hence by the chain rule
\begin{equation}\label{eq:proof-chain-rule-vp}
\partial_{\mu}\vp_{i} = \pd_{j}\vp_{i}\partial_{\mu}\vartheta^{j} = \fr_{ij}\partial_{\mu}\vartheta^{j}.
\end{equation}
    Thus, taking into account the symmetry of $\fr_{ij}$, \eqref{eq:def-EH-general} yields 
    \begin{subequations}
    \begin{align}
    E_{H}(\vP) &= \frac{1}{2}\int_{M} h^{\mu\nu}\partial_{\mu}\vartheta^{i}\partial_{\nu}\vartheta^{j}\fr_{ij}
    = \frac{1}{2}\int_{M} h^{\mu\nu}\partial_{\mu}\vartheta^{j}\partial_{\nu}\vartheta^{i}\fr_{ij}
    = \frac{1}{2}\int_{M} h^{\mu\nu}\partial_{\mu}\vp_{i}\partial_{\nu}\vartheta^{i}
    \\
    &= \frac{1}{2}\int_{M}\la\dd\vp_{i},\dd\vartheta^{i}\ra_{h}.
    \end{align}
    \end{subequations}
\end{proof}
We compute the key ingredient of the sigma flow, the tension field $\tau$ from Eq. \eqref{eq:tension_field}, in local coordinates.
\begin{proposition}[tension field in coordinates]
The tension field of $\vP : M \to \S_c$ with $\theta$ coordinate functions $\vt^{i},i\in[\cc]$ is given locally by
    \begin{equation}\label{eq:tension-coordinates}
        \tau(\vP; \fr,h)^{i}
        = \Delta _h  \vartheta^{i} + \frac{1}{2} \la \dd  (\vartheta^{i} - \Psi) , \dd (\vartheta^{i} - \Psi) \ra_h  - \frac{1}{2} \la \dd \Psi , \dd \Psi \ra_h,\qquad i\in[\cc],
    \end{equation}
    where $\Psi = \psi \circ \vP : M \to \R$ from \eqref{eq:def-psi-partition}. 
\end{proposition}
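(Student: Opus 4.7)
The plan is to substitute the $\theta$-coordinate Christoffel symbols from Eq.~\eqref{eq:gamma0} into the general local expression \eqref{eq:tension-f-explicitly} for the tension field and then reorganize the resulting quadratic terms by completing a square.

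Concretely, I would first write
\begin{equation}
\tau(\vP;\fr,h)^{i} = \Delta_h \vartheta^{i} + \Gamma^{i}_{jk}\bigl(\vP(x)\bigr)\, \partial_{\mu}\vartheta^{j}\partial_{\nu}\vartheta^{k}\, h^{\mu\nu},
\end{equation}
and then expand $\Gamma^{i}_{jk} = \tfrac{1}{2}(\delta^{i}_{j}\delta_{jk} - \delta^{i}_{j}p_{k} - \delta^{i}_{k}p_{j})$ in the contracted expression. The first term contracts (for each fixed $i$, no sum on $i$) to $\tfrac{1}{2}\la \dd\vartheta^{i},\dd\vartheta^{i}\ra_{h}$. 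For the other two terms, the key identity is the chain rule combined with \eqref{eq:partial-psi}:
\begin{equation}
\partial_{\mu}\Psi = \partial_{\mu}(\psi \circ \vP) = (\pd_{j}\psi)\bigl(\vP\bigr)\, \partial_{\mu}\vartheta^{j} = p_{j}\,\partial_{\mu}\vartheta^{j},
\end{equation}
so that $p_{j}\partial_{\mu}\vartheta^{j}$ collapses to $\partial_{\mu}\Psi$. Exploiting symmetry of $h^{\mu\nu}$, the two $p$-terms combine to $-\la \dd\vartheta^{i}, \dd\Psi\ra_{h}$, giving
\begin{equation}
\Gamma^{i}_{jk}\,\partial_{\mu}\vartheta^{j}\partial_{\nu}\vartheta^{k}\, h^{\mu\nu} = \tfrac{1}{2}\la \dd\vartheta^{i},\dd\vartheta^{i}\ra_{h} - \la \dd\vartheta^{i}, \dd\Psi\ra_{h}.
\end{equation}

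The last step is purely algebraic: complete the square by adding and subtracting $\tfrac{1}{2}\la \dd\Psi,\dd\Psi\ra_{h}$,
\begin{equation}
\tfrac{1}{2}\la \dd\vartheta^{i},\dd\vartheta^{i}\ra_{h} - \la \dd\vartheta^{i}, \dd\Psi\ra_{h} = \tfrac{1}{2}\la \dd(\vartheta^{i}-\Psi), \dd(\vartheta^{i}-\Psi)\ra_{h} - \tfrac{1}{2}\la \dd\Psi, \dd\Psi\ra_{h},
\end{equation}
which yields the claimed formula after adding $\Delta_{h}\vartheta^{i}$.

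The main subtlety, and the only thing requiring real care, is index bookkeeping in the first Christoffel term: $\delta^{i}_{j}\delta_{jk}$ is meant to contribute only when $i=j=k$, so in the contraction against $\partial_{\mu}\vartheta^{j}\partial_{\nu}\vartheta^{k}$ one must not further sum over $i$ (the $i$ is the free tension-field index). Once this is handled, the rest of the computation is an application of the chain rule plus symmetry of $h$ and completing the square; no analytic input is needed beyond Eqs.~\eqref{eq:gamma0} and \eqref{eq:partial-psi} already recalled in the paper.
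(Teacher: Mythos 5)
Your proposal is correct and follows essentially the same route as the paper's own proof: substitute the Christoffel symbols \eqref{eq:gamma0} into \eqref{eq:tension-f-explicitly}, use the chain rule $\partial_{\mu}\Psi = \vp_{j}\partial_{\mu}\vartheta^{j}$ via \eqref{eq:partial-psi} to collapse the two $p$-terms into $-\la \dd\vartheta^{i},\dd\Psi\ra_{h}$, and complete the square. You even make explicit the final completing-the-square step, which the paper states without spelling out, and your remark on the free index $i$ in the $\delta^{i}_{j}\delta_{jk}$ contraction is exactly the bookkeeping point the paper's computation handles implicitly.
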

\begin{proof}
\eqref{eq:tension_field} reads
\begin{equation}\label{eq:proof-tension-locally}
\tau(\vP; \fr,h)^{i} = \Delta_h \vartheta^{i} + \Gamma^{i}_{jk}\partial_{\mu}\vartheta^{j}\partial_{\nu}\vartheta^{k} h^{\mu\nu}.
\end{equation}
Using the relation $\vp_{i}=\pd_{i}\psi$ by \eqref{eq:partial-psi} and hence by the chain rule
\begin{equation}
\partial_{\mu}\Psi = \pd_{i}\psi\partial_{\mu}\vartheta^{i} = \vp_{i}\partial_{\mu}\vartheta^{i},
\end{equation}
substitution of the Christoffel symbols \eqref{eq:gamma0} yields
\begin{subequations}
\begin{align}
\tau(\vP; \fr,h)^{i} 
&= \Delta_h \vartheta^{i} +
\frac{1}{2}(\delta^{i}_{j}\delta_{jk} - \delta^i_j \vp_k - \delta^i_k \vp_j) \partial_{\mu}\vartheta^{j}\partial_{\nu}\vartheta^{k} h^{\mu\nu}
\\
&= \Delta_h \vartheta^{i} + \frac{1}{2}(
\partial_{\mu}\vartheta^{i}\partial_{\nu}\vartheta^{i} 
- \vp_{k}\partial_{\mu}\vartheta^{i}\partial_{\nu}\vartheta^{k} - \vp_{j}\partial_{\mu}\vartheta^{j}\partial_{\nu}\vartheta^{i}) h^{\mu\nu}
\\
&= \Delta_h \vartheta^{i} + \frac{1}{2}(
\partial_{\mu}\vartheta^{i}\partial_{\nu}\vartheta^{i} 
- \partial_{\mu}\vartheta^{i}\partial_{\nu}\Psi 
- \partial_{\mu}\Psi\partial_{\nu}\vartheta^{i}
) h^{\mu\nu}
\\
&= \Delta_h \vartheta^{i} + \frac{1}{2}\big( \la \dd\vartheta^{i}, \dd\vartheta^{i}\ra_{h} - 2\la \dd\vartheta^{i}, \dd \Psi\ra_{h}\big)
\end{align}
\end{subequations}
which is equal to \eqref{eq:tension-coordinates}.
\end{proof}
Since mathematically equivalent expressions may behave differently when they are evaluated numerically, we derive another expression for the tension field in local coordinates.
\begin{proposition}[alternative form of the tension field]\label{prop:tau-vp-alternative}
The tension field of $\vP:M\to\S_c$ with $\theta$-coordinate functions $\vt^{i},i\in[\cc]$ and $p$-coordinate functions $\vp_i,i\in[\cc]$ is given locally, with respect to the $\theta$-coordinate system, by   
    \begin{equation}\label{eq:tau-vp-alternative}
        \tau(\vP; \fr,h)^{i} = \frac{1}{2}(\Delta _h \vt^{i} + \fr^{ij}\Delta _h \vp_j),\qquad i\in[\cc]. 
    \end{equation}
\end{proposition}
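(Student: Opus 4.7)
The plan is to reduce the right-hand side $\tfrac{1}{2}\bigl(\Delta_h\vt^i + \fr^{ij}\Delta_h\vp_j\bigr)$ of \eqref{eq:tau-vp-alternative} to the coordinate expression of the tension field from \eqref{eq:tension-f-explicitly}, by converting $p$-derivatives of $\vP$ into $\theta$-derivatives and identifying the Christoffel terms that emerge from the product rule.

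First I would apply the chain-rule identity $\partial_\mu\vp_i = \fr_{ij}\partial_\mu\vt^j$ of \eqref{eq:proof-chain-rule-vp} inside the Laplace-Beltrami definition \eqref{eq:delta-EH}. A single product-rule step yields
\begin{equation*}
\Delta_h \vp_i = \fr_{ij}\,\Delta_h\vt^j + h^{\mu\nu}\,\partial_\mu\fr_{ij}\,\partial_\nu\vt^j,
\end{equation*}
since $\fr_{ij}$ may be pulled out of the divergence $\tfrac{1}{\sqrt{|h|}}\partial_\mu(\sqrt{|h|}h^{\mu\nu}\,\cdot)$ at the cost of this commutator correction. Contracting with the inverse Fisher-Rao metric $\fr^{ki}$ and writing $\partial_\mu\fr_{ij} = \pd_l\fr_{ij}\,\partial_\mu\vt^l$ gives
\begin{equation*}
\fr^{ki}\Delta_h\vp_i = \Delta_h\vt^k + \fr^{ki}\,\pd_l\fr_{ij}\, h^{\mu\nu}\partial_\mu\vt^l\partial_\nu\vt^j.
\end{equation*}

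Next I would invoke the Hessian structure $\fr_{ij} = \pd_i\pd_j\psi$ from \eqref{eq:fr_metric}, which makes $\pd_l\fr_{ij} = \pd_l\pd_i\pd_j\psi$ totally symmetric in the three indices. Consequently the Koszul formula for the Levi-Civita Christoffels collapses to a single term, reproducing \eqref{eq:gamma0} in the equivalent form $2\Gamma^k_{lj} = \fr^{ki}\pd_l\fr_{ij}$. Substitution then produces
\begin{equation*}
\fr^{ki}\Delta_h\vp_i = \Delta_h\vt^k + 2\Gamma^k_{lj}\, h^{\mu\nu}\partial_\mu\vt^l\partial_\nu\vt^j,
\end{equation*}
and comparing with the tension field expression $\tau(\vP;\fr,h)^k = \Delta_h\vt^k + \Gamma^k_{lj}\,h^{\mu\nu}\partial_\mu\vt^l\partial_\nu\vt^j$ obtained by specializing \eqref{eq:tension-f-explicitly} to $f=\vP$ with Fisher-Rao Christoffels, the claim \eqref{eq:tau-vp-alternative} is immediate after averaging both sides with $\Delta_h\vt^k$.

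The only delicate point is verifying that the commutator term generated by the product rule matches \emph{exactly} twice the nonlinear piece of $\tau$. This coefficient hinges on the dually-flat Hessian structure of $(\S_c,\fr)$ in $\theta$-coordinates, which makes all three permutations of $\pd_l\fr_{ij}$ coincide and reduces the Koszul cyclic sum to a single term; for a generic (non-Hessian) target metric the clean identity \eqref{eq:tau-vp-alternative} would not hold.
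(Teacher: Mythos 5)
Your proof is correct, but it takes a genuinely different route from the paper's. You verify the identity pointwise: the product rule inside the divergence gives $\Delta_h \vp_i = \fr_{ij}\Delta_h\vt^j + \la \dd \fr_{ij}, \dd\vt^j\ra_h$, and the total symmetry of $\pd_l\fr_{ij} = \pd_l\pd_i\pd_j\psi$ (the Hessian/Codazzi structure) collapses the Koszul formula to $2\Gamma^k_{lj} = \fr^{ki}\pd_l\fr_{ij}$, so contracting with $\fr^{ki}$ and comparing with the coordinate expression of the tension field from \eqref{eq:tension-f-explicitly} (equivalently \eqref{eq:proof-tension-locally}) yields \eqref{eq:tau-vp-alternative}; all steps check out. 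The paper instead argues variationally: it computes the first variation of the harmonic energy in the mixed form $E_H(\vP) = \frac{1}{2}\int_M \la \dd\vp_i,\dd\vt^i\ra_h$ from Proposition \ref{prop:EH-simplex}, integrates by parts twice, observes that the two $\la\dd\fr_{ij},\dd\vt^i\ra_h$ terms cancel, and reads off $\tau$ from the Euler--Lagrange pairing $\delta E_H = -\int_M \fr_{ij}\,\tau(\vP;\fr,h)^i\,\eta^j$. Your computation is shorter and purely algebraic, makes explicit exactly where the dually flat structure of $(\S_c,\fr)$ enters (your closing remark that the identity fails for a generic non-Hessian target metric is apt), and it is essentially the same product-rule manipulation the paper later carries out, for general $\alpha$, in proving the sigma-$\alpha$ flow coordinate formula \eqref{eq:alpha-sigma-coordinates-e} -- so your argument generalizes immediately to the $\alpha$-family by weighting the correction term with $\tfrac{1-\alpha}{2}$. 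What the paper's variational route buys is that it does not presuppose the coordinate formula \eqref{eq:tension-f-explicitly} for $\tau$: it re-derives the tension field directly from the energy of Proposition \ref{prop:EH-simplex}, thereby doubling as a consistency check of that mixed-coordinate energy expression against the definition \eqref{eq:def-EH}.
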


\begin{proof}
In this proof, we use explicitly the volume measure $\omega_h$ \eqref{eq:int-M}, which we denote by $\omega_h = \sqrt{ |h| } \dd x$.
We compute the first variation of the harmonic energy given by Proposition \ref{prop:EH-simplex}. For any smooth functions $f, g\colon M\to\R$, one has
\begin{subequations}\label{eq:laplace-adjoint}
\begin{align}
    \int_{M} \la df, dg\ra_{h} \,\omega _h 
    &= 
    \int_M \partial_\mu f \partial_\nu g h^{\mu \nu } 
    \sqrt{|h|} \dd x \\ 
    &= 
    - \int_M  f \, \partial_\mu( h^{\mu \nu } \sqrt{|h|} \partial_\nu g)
    \dd x\\ 
    &= 
    - \int_M  f \, \partial_\mu( h^{\mu \nu } \sqrt{|h|} \partial_\nu g)
    \frac{\sqrt{|h|} }{\sqrt{|h|} }
    \dd x\\ 
    &= 
    -\int_{M} (f \Delta_{h} g) \,\omega _h 
\end{align}
\end{subequations}
with 
\begin{equation}\label{eq:proof-Delta-h-expression}
    \Delta_{h} g = 
    \frac{1 }{\sqrt{|h|} }
    \partial_\mu( h^{\mu \nu } \sqrt{|h|} \partial_\nu g).
\end{equation}
Thus
\begin{subequations}
    \begin{align}
        \dv{t}  E_H(\vt + t \eta  ) \big|_{t=0}
        &= \dv{t} \frac{1}{2}\int _M \la \dd \vp_i(\vt +  t \eta) ,{\dd} (\vt^{i} + t \eta ^{i}) \ra_h \big|_{t=0}\\
        &= \frac{1}{2} \int _M \la \dd (\eta^{j} \pd_j \vp_i ) ,\dd \vt^{i} \ra_h  + \frac{1}{2} \int _M \la\dd \vp_i ,\dd \eta ^{i} \ra_h 
        \qquad(\text{using $\pd_{j}\vp_{i}=\fr_{ji}=\fr_{ij}$})
        \\ \label{eq:proof-dt-EH-alternative-c}
        &= 
        \frac{1}{2} \int _M \eta ^{j} \la \dd \fr_{ij} , \dd \vt^{i}\ra_h  
        + \frac{1}{2} \int _M \fr_{ij} \la \dd \eta ^{j} ,\dd \vt^{i} \ra_h 
        - \frac{1}{2} \int _M \eta ^{i} \Delta _h \vp_i 
    \end{align}
\end{subequations}
We rewrite the second integral using partial integration.
\begin{subequations}
    \begin{align}
        \int _M \fr_{ij} \la \dd \eta ^{j} ,\dd \vt^{i} \ra_h         
        \overset{\eqref{eq:int-M}}{=}  
        \int _M \fr_{ij} \la \dd \eta ^{j} ,\dd \vt^{i} \ra_h \omega_h
        & = 
        \int _M  \fr_{ij} \partial _\mu \eta ^{j}  \partial _\nu  \vt^{i} h^{\mu \nu } \sqrt{|h|} \dd x    \\ 
        &= 
        -\int _M  \eta ^{j}  \partial _\mu \fr_{ij} \partial _\nu  \vt^{i} h^{\mu \nu } \sqrt{|h|} \dd x    
        -
        \int _M  \eta ^{j}  \fr_{ij} \partial _\mu ( h^{\mu \nu } \sqrt{|h|} \partial _\nu  \vt^{i}) \dd x 
        \\ 
        &\overset{\eqref{eq:proof-Delta-h-expression}}{=} 
        -\int _M  \eta ^{j}  \la \dd \fr_{ij} , \dd  \vt^{i} \ra_h \sqrt{|h|} \dd x    
        -\int _M  \eta ^{j}  \fr_{ij} \Delta_h \vt^{i} \sqrt{|h|} \dd x.
    \end{align}
\end{subequations}
Substitution in \eqref{eq:proof-dt-EH-alternative-c} yields
\begin{subequations}
    \begin{align}
    \dv{t} & E_H(\vt + t \eta  ) \big|_{t=0} \\
        &= 
        \frac{1}{2} \int _M \eta ^{j}  \la \dd \fr_{ij} ,\dd \vt^{i} \ra_h
        - \frac{1}{2} \int _M \eta ^{j}  \la \dd \fr_{ij},  \dd \vt^{i} \ra_h  
        - \frac{1}{2} \int _M \fr_{ij} \eta ^{j} \Delta _h \vt^{i}  
        - \frac{1}{2} \int _M \eta ^{i} \Delta _h \vp_i \\
        &=
        - \frac{1}{2} \int _M \fr_{ij} \eta ^{j} \Delta _h \vt^{i}  
        - \frac{1}{2} \int _M \eta ^{j} \Delta _h \vp_j 
        = -\frac{1}{2}\int_{M} \fr_{ij}(\Delta_{h}\vartheta^{i}+\fr^{ij}\Delta_{h}\vp_{j}) \eta^{j} \\
        &= 
        -\int _M \fr_{ij} \tau(\vP; \fr,h)^{i} \eta^{j},
    \end{align}
\end{subequations}
which proves \eqref{eq:tau-vp-alternative}.
\end{proof}

\begin{remark}[Fisher-Rao metric becomes singular]
We note that the Fisher-Rao metric
\begin{equation}\label{eq:def-fr-ij}
    \fr_{ij} = p_ i \delta _{ij} - p_i p_j,\qquad i,j\in[\cc]
    \end{equation}
converges to a singular matrix along paths approaching the boundary of $\S_c \subset \triangle_c$. 
As a remedy, in the rest of this section, we use a regularized metric $\fr_{\varepsilon}$ given in the $\theta$-coordinate system by
\begin{equation}\label{eq:def-fr-eps}
(\fr_{\varepsilon} )_{ij} = \fr_{ij} + \varepsilon \delta_{ij}, \qquad  i,j\in[\cc]
\end{equation}
that is bounded from below by $\varepsilon \II$. The derivatives of the metric are preserved, however. 
\end{remark}
\begin{proposition}[Christoffel symbols of $\fr_{\varepsilon}$]
    \label{prop:Gamma-Eps}
    The Christoffel symbols of $\fr_{\varepsilon }$, denoted by $\Gamma _\varepsilon $, are given by 
    \begin{subequations}\label{eq:def-Gamma-veps}
    \begin{align}
        (\Gamma _\varepsilon )_{ijk} &= \Gamma _{ijk} =\frac{1}{2} \pd_i \pd_j \pd_k \psi 
    \intertext{and}
        (\Gamma _\varepsilon )^{i}_{jk} &= (\fr_\varepsilon )^{il} \Gamma _{ljk},\qquad
        i,j,k\in[\cc].
        \end{align}
    \end{subequations}
\end{proposition}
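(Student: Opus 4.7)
The plan is to exploit two observations: that the perturbation $\varepsilon\II$ is a \emph{constant} tensor in the $\theta$-coordinate system, and that $\fr$ inherits a Hessian structure from the log-partition function $\psi$ via \eqref{eq:fr_metric}. Together these reduce the claim to a one-line computation.

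First I would recall the coordinate formula for first-kind Christoffel symbols of an arbitrary metric $g$, namely
\begin{equation}
(\Gamma_g)_{ijk} = \tfrac{1}{2}\big(\pd_j g_{ki} + \pd_k g_{ij} - \pd_i g_{jk}\big),
\end{equation}
and apply it to $g = \fr_\varepsilon = \fr + \varepsilon \II$ from \eqref{eq:def-fr-eps}. Because $(\fr_\varepsilon)_{ij} = \fr_{ij} + \varepsilon\delta_{ij}$ and $\delta_{ij}$ does not depend on the $\theta$-coordinates, $\pd_l(\fr_\varepsilon)_{ij} = \pd_l \fr_{ij}$ for every $l$. Hence $(\Gamma_\varepsilon)_{ijk} = \Gamma_{ijk}$, establishing the first equality.

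Next I would invoke the Hessian structure \eqref{eq:fr_metric}, which gives $\fr_{ij} = \pd_i\pd_j\psi$, and therefore $\pd_l \fr_{ij} = \pd_l\pd_i\pd_j\psi$. By Schwarz's theorem this third partial derivative is fully symmetric in $(i,j,l)$, so in the first-kind Christoffel formula the two positive terms both equal $\pd_i\pd_j\pd_k\psi$ and one of them is canceled by the negative term, leaving $\Gamma_{ijk} = \tfrac{1}{2}\pd_i\pd_j\pd_k\psi$, consistent with \eqref{eq:gamma0}. The second equation in \eqref{eq:def-Gamma-veps} then follows immediately from the standard relation between first- and second-kind Christoffel symbols, $(\Gamma_\varepsilon)^i_{jk} = (\fr_\varepsilon)^{il}(\Gamma_\varepsilon)_{ljk}$, combined with the first equation $(\Gamma_\varepsilon)_{ljk} = \Gamma_{ljk}$ already proved.

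There is no real obstacle here; the content of the statement is precisely that the Hessian character of $\fr$ makes its first-kind Christoffel symbols depend only on $\psi$, so any additive constant-coefficient perturbation of $\fr$ leaves them unchanged, and only the inverse metric used to raise the index is affected. The only thing to be mindful of is that the identity-type perturbation is constant in the \emph{$\theta$}-coordinates specifically; this is why the proposition is stated and the subsequent analysis is carried out in that chart.
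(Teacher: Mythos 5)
Your proof is correct and follows essentially the same route as the paper's: both use the constancy of $\varepsilon\delta_{ij}$ in $\theta$-coordinates to reduce the first-kind Christoffel symbols of $\fr_\varepsilon$ to those of $\fr$, then exploit the full permutation symmetry of $\pd_i\pd_j\pd_k\psi$ (the paper cites \cite[Section 3.3]{amariMethodsInformationGeometry2000}, you invoke Schwarz's theorem directly) to obtain $\Gamma_{ijk}=\tfrac{1}{2}\pd_i\pd_j\pd_k\psi$, and finally note that the second equation is just the defining relation between first- and second-kind symbols. No gaps; your closing remark about the perturbation being constant specifically in the $\theta$-chart is the same observation the paper relies on implicitly.
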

\begin{proof}
    The relation $(\Gamma _\varepsilon )_{ijk} = \Gamma _{ijk} = \frac{1}{2} \pd_i \pd_j \pd_k \psi$ follows by the definition of the connection and $\pd_i (\fr_{\varepsilon})_{jk} = \pd_i \fr _{jk}$:
    \begin{equation}
        (\Gamma _\varepsilon )_{ijk}  
        =
        \frac{1}{2}
        (
            \pd_k (\fr_{\varepsilon } )_{ij} 
            +
            \pd_j (\fr_{\varepsilon } )_{ki}
            - 
            \pd_i (\fr_{\varepsilon } )_{jk}
        )
        =
        \frac{1}{2}
        (
            \pd_k \fr_{ij} 
            +
            \pd_j \fr_{ki}
            - 
            \pd_i \fr_{jk}
        )
        = \Gamma _{ijk}.
    \end{equation}
    Further noting \cite[Section 3.3]{amariMethodsInformationGeometry2000}
    \begin{equation}
        \pd_i \fr _{jk}
        =
        \pd_i \pd_j \pd_k \psi 
    \end{equation}
    is permutation invariant shows $\Gamma _{ijk} =\frac{1}{2} \pd_i \pd_j \pd_k \psi$. The second claim is the defining relation of $(\Gamma_{\varepsilon } )^{i}_{jk} $.
\end{proof}

\subsection{Sigma Flows}\label{sec:Regularized-Sigmaflow}

The following definition introduces our new model, defined as a Beltrami flow with dynamic metric $h$ and target manifold $(\S_c,\fr_{\veps})$.
\begin{definition}[sigma flow]
    Let $\mfk{P} \in C^{\infty }(M,\S_c)$ and $T> 0 ,\varepsilon > 0$ be fixed. 
    The \textit{sigma flow} is the system of PDEs
    \begin{equation}
        \label{eq:sigmaflow}
        \tag{$\sigma$-flow}
        \vS \in C^{\infty }(M_T,\S_c) \qq{s.t.} 
        \begin{dcases}
            \partial_t \vS_{t} = \tau(\vS_{t}; \fr_{\veps},h_{t}), \quad & \forall t \in (0,T), \\
            h_{t} = \mc{O}(\vS_{t}) ,\quad & \forall t \in (0,T),\\ 
            \vS_0  = \mfk{P},
        \end{dcases}
    \end{equation}
    where
    \begin{equation}
        \mc{O} : C^{\infty }(M,\S_c) \to \Gamma(T ^{*}M \otimes T^* M)
    \end{equation}
    maps to the set of positive definite symmetric 2-tensors such that, for all $\mfk P \in C^{\infty}(M,\S_c)$ and $x \in M$,
    \begin{equation}\label{eq:ass-uniform-boundedness}
        \mc{O}(\mfk P)(x) \succeq C(\mc{O}) \, \II_{m}, \quad\text{for some}\; C(\mc{O}) > 0.
    \end{equation}
\end{definition}    
The condition on $\mc{O}$ is called \textit{uniform positive definiteness criterion} in \cite{Weickert:1998aa}. It is trivially satisfied by choosing a fixed metric $h$ independent of the state $\vS$.  
\\
In the following, we denote the coordinate expressions of $\vS$ by $\vt$ and $\vp$ respectively. To avoid cluttered formulae, we do not indicate the time dependence $t\mapsto \vt_t,t\mapsto \vp_t$.  
\begin{proposition}[sigma flow in coordinates]\label{prop:sigmaflow-coordinates}
Set $\veps=0$ in \eqref{eq:def-fr-eps}. 
    If $\vt, \vp$ denote the coordinate expressions of $\vS$ in $\theta$ and $p$ coordinates respectively, the first equation of the system \eqref{eq:sigmaflow} takes the form 
    \begin{subequations}
    \begin{align}
        \partial _t \vt^{i} &= \frac{1}{2}(\Delta_{h_{t}} \vt^{i} + \fr^{ij} \Delta_{h_{t}} \vp_j),\qquad i\in[\cc],
    \intertext{and}
        \partial _t \vp_i &= \frac{1}{2}(\fr_{ij} \Delta_{h_{t}} \vt ^{j} + \Delta_{h_{t}} \vp_i),\qquad i\in[\cc].
        \end{align}
    \end{subequations}
\end{proposition}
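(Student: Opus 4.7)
The plan is to leverage Proposition \ref{prop:tau-vp-alternative}, which already supplies the coordinate form of the tension field with respect to the $\theta$-system, and then obtain the $p$-coordinate equation by a routine change of coordinates.

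First I would handle the $\theta$-coordinate equation. By the defining equation of \eqref{eq:sigmaflow} we have $\partial_t \vS_t = \tau(\vS_t;\fr_\veps,h_t)$, which in $\theta$-coordinates reads $\partial_t \vt^i = \tau(\vS_t;\fr_\veps,h_t)^i$. Setting $\veps = 0$ as the proposition prescribes and invoking the formula \eqref{eq:tau-vp-alternative} of Proposition \ref{prop:tau-vp-alternative}, one obtains
\begin{equation*}
    \partial_t \vt^i \;=\; \tfrac{1}{2}\bigl(\Delta_{h_t} \vt^i + \fr^{ij}\Delta_{h_t}\vp_j\bigr), \qquad i \in [\cc],
\end{equation*}
which is precisely the first claim. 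No further computation is needed here.

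For the second equation I would use the chain rule linking the two coordinate systems. Since $\vp_i = p_i \circ \vS_t$ and $\vt^j = \theta^j \circ \vS_t$, the relation $\pd_j p_i = \fr_{ij}$ from \eqref{eq:fr_metric} gives
\begin{equation*}
    \partial_t \vp_i \;=\; \pd_j p_i \cdot \partial_t \vt^j \;=\; \fr_{ij}\,\partial_t \vt^j.
\end{equation*}
Substituting the first equation and contracting via $\fr_{ij}\fr^{jk} = \delta_i^{\,k}$ yields
\begin{equation*}
    \partial_t \vp_i \;=\; \tfrac{1}{2}\fr_{ij}\bigl(\Delta_{h_t}\vt^j + \fr^{jk}\Delta_{h_t}\vp_k\bigr) \;=\; \tfrac{1}{2}\bigl(\fr_{ij}\Delta_{h_t}\vt^j + \Delta_{h_t}\vp_i\bigr),
\end{equation*}
as required.

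I do not anticipate a genuine obstacle: both steps are direct consequences of previously established material. The only thing worth flagging is that the proposition's hypothesis $\veps = 0$ must be used so that Proposition \ref{prop:tau-vp-alternative} (which is stated for the unregularized Fisher-Rao metric $\fr$) applies verbatim; otherwise one would have to carry $\fr_\veps$ and its derivatives through, which by Proposition \ref{prop:Gamma-Eps} only affects the inverse metric $\fr_\veps^{\,ij}$ appearing in the contraction step.
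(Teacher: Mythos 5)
Your proof is correct and follows essentially the same route as the paper's: the $\theta$-equation is read off directly from Proposition \ref{prop:tau-vp-alternative} with $\veps=0$, and the $p$-equation follows from the chain-rule identity $\partial_t\vp_i=\fr_{ij}\,\partial_t\vt^{j}$ (the paper's \eqref{eq:proof-chain-rule-vp} with $\partial_t$ in place of $\partial_\mu$) together with the contraction $\fr_{ij}\fr^{jk}=\delta_i^{\,k}$. Your closing remark on why the hypothesis $\veps=0$ is needed for Proposition \ref{prop:tau-vp-alternative} to apply verbatim is a sensible point the paper leaves implicit.
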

\begin{proof}
The first form follows directly from Proposition \ref{prop:tau-vp-alternative}. It yields the second form after applying equation \eqref{eq:proof-chain-rule-vp} with $\partial_{t}$ in place of $\partial_{\mu}$.
\end{proof}
Our next goal is to devise a Lyapunov functional for the sigma flow, after two preparatory Lemmata; see Proposition \ref{prop:lyapunov} below.
\begin{lemma}[spectrum of Laplace-Beltrami operator]\label{lem:LB-spectrum}
    For any metric $h$ on $M$, the Laplace-Beltrami operator $\Delta _h$ is diagonalizable. The eigenfunctions of $\Delta _h$, $\phi _n \in C^{\infty}(M), n \in \N_{0}$ exist and form an orthonormal Hilbert basis of $L^{2}(M)$. 
    Furthermore, let $\lambda _n, n \in \N$ denote the eigenvalues of $\Delta _h$, i.e. 
\begin{equation}\label{eq:Delta-h-spectrum}
\Delta_{h}\phi_{n} = \lambda_{n}\phi_{n},\qquad n\in\N_{0}.
\end{equation}
    Then $\lambda_0 = 0$ and $\lambda_i < 0$ for all $i > 0$. 
\end{lemma}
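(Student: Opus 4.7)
The plan is to invoke the standard spectral theory of self-adjoint elliptic operators on compact Riemannian manifolds, which is a classical package; the lemma is essentially an application of that theory, so rather than redoing the functional-analytic machinery I would cite it and verify the hypotheses.

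First I would check the two analytic properties that drive everything: self-adjointness and negative semi-definiteness of $\Delta_h$ on $L^2(M,\omega_h)$. Both follow from the integration-by-parts identity already established in \eqref{eq:laplace-adjoint}. Namely, for any $f,g \in C^{\infty}(M)$,
\begin{equation}
\int_M (\Delta_h f)\, g\, \omega_h = -\int_M \langle df, dg\rangle_h\, \omega_h = \int_M f\, (\Delta_h g)\, \omega_h,
\end{equation}
which gives symmetry, and setting $g=f$ yields $\langle \Delta_h f, f\rangle_{L^2} = -\|df\|_{L^2}^2 \leq 0$, proving negative semi-definiteness. Here compactness of $M$ (with no boundary) is crucial, since it eliminates boundary terms in the integration by parts.

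Next I would appeal to the standard spectral theorem for Laplace-Beltrami operators on closed Riemannian manifolds, see e.g.\ \cite[Ch.~I]{chavelEigenvaluesRiemannianGeometry1984} or \cite[Ch.~1]{Rosenberg:1997aa}. The key facts are: $\Delta_h$ is an elliptic, essentially self-adjoint operator on $C^{\infty}(M) \subset L^2(M,\omega_h)$; its resolvent (equivalently $(\mathrm{Id}-\Delta_h)^{-1}$) is a compact operator on $L^2(M)$ by Rellich-Kondrachov compactness of Sobolev embeddings on a compact manifold; hence by the spectral theorem for compact self-adjoint operators, $\Delta_h$ has a purely discrete spectrum and an orthonormal $L^2$-basis of eigenfunctions. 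Elliptic regularity (applied to $(\Delta_h-\lambda)\phi_n = 0$) upgrades each eigenfunction to $C^{\infty}(M)$. This yields the first two claims.

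For the sign statement I would argue directly: from $\Delta_h \phi_n = \lambda_n \phi_n$ and the identity above,
\begin{equation}
\lambda_n \|\phi_n\|_{L^2}^2 = \langle \Delta_h \phi_n, \phi_n \rangle_{L^2} = -\|d\phi_n\|_{L^2}^2 \leq 0,
\end{equation}
so every eigenvalue is nonpositive. Equality holds iff $d\phi_n \equiv 0$, i.e.\ iff $\phi_n$ is locally constant; since $M$ is connected, this forces $\phi_n$ to be constant. The constant functions form a one-dimensional eigenspace with eigenvalue $0$, which I label $\lambda_0 = 0$ with $\phi_0 = \mathrm{vol}(M)^{-1/2}$, and all remaining eigenvalues are strictly negative. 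The main obstacle is really just bookkeeping: one must be careful that the sign convention adopted in Remark~\ref{rem:sign-laplace} is consistent with the computation above (it is, since we have chosen $\Delta_h$ to be the divergence form operator, making $-\Delta_h$ positive), and that connectedness of $M$ (part of the standing hypotheses on $M$) is used precisely at the step identifying the kernel.
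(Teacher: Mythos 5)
Your proposal is correct and matches the paper's approach: the paper's entire proof is the citation ``See \cite[Thm.1]{chavelEigenvaluesRiemannianGeometry1984}'', which is precisely the classical spectral-theory package (essential self-adjointness, compact resolvent, discrete nonpositive spectrum, smooth orthonormal eigenbasis) that you invoke and whose hypotheses you verify. Your explicit integration-by-parts sign computation and the use of connectedness to identify the kernel with the constants are exactly the content that the cited theorem encapsulates, so you have simply unpacked the reference rather than taken a different route.
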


\begin{proof}
    See \cite[Thm.1]{chavelEigenvaluesRiemannianGeometry1984}.
\end{proof}
\begin{lemma}[upper/lower uniform boundedness]
    \label{lem:b}
    The mapping $\mathcal{B} : \S_c \to \R^{\cc \times \cc}$ defined in the $\theta $-coordinate system by 
    \begin{equation}\label{eq:def-mcB-theta}
        \mathcal{B}_{ij}(\theta) = \fr_{ij}(\theta) + \frac{1}{2} \pd_k \fr_{ij}(\theta) \theta ^{k},\qquad i,j\in[\cc]
    \end{equation}
    maps into the set of symmetric matrices and admits the bounds
    \begin{equation}\label{eq:bounds-mcB}
    \forall\theta,\qquad
        c_{2} \II \succeq \mathcal{B}(\theta) \succeq c_{1}\II,\qquad
        c_{1}=-\frac{c ^{2} -1}{2 e},\quad
        c_{2}=\frac{1}{2}\Big(1+\frac{c^{2} -1}{e}\Big),
    \end{equation}
    where $c_1, c_2>0$ only depend on $\dim \S_c = \cc = c-1$.  
\end{lemma}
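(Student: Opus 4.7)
My plan is to recast $\mathcal{B}$ as a half-Hessian of a known scalar quantity in order to reduce both bounds to a single operator-norm estimate. Symmetry of $\mathcal{B}_{ij}$ is immediate: $\fr_{ij}$ is symmetric, and by Proposition~\ref{prop:Gamma-Eps} the coefficient $\pd_k\fr_{ij} = \pd_i\pd_j\pd_k\psi$ is a mixed third partial of the smooth potential $\psi$ and is therefore fully symmetric in all three indices.

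The structural step will be to establish the identity $2\mathcal{B} = \fr + \Hess_\theta\varphi$. Starting from $\pd_j(p_k\theta^k) = p_j + \fr_{jk}\theta^k$ and differentiating again gives $\pd_i\pd_j(p_k\theta^k) = 2\fr_{ij} + \pd_i\fr_{jk}\theta^k$; the full symmetry of $\pd_i\pd_j\pd_k\psi$ together with the Legendre identity $p_k\theta^k = \psi + \varphi$ from \eqref{eq:HH} and $\Hess_\theta\psi = \fr$ converts this into the desired reformulation. This reduces the lemma to two independent estimates: (i) $0 \preceq \fr \preceq \II$, and (ii) $\|\Hess_\theta\varphi\|_{\mathrm{op}} \le (c^2-1)/e$. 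Combining them with $\mathcal{B} = \tfrac{1}{2}(\fr + \Hess_\theta\varphi)$ produces exactly $c_2 = \tfrac{1}{2}(1+\tfrac{c^2-1}{e})$ and $c_1 = -\tfrac{c^2-1}{2e}$.

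Claim (i) is standard: $\fr = \diag(p) - pp^\top$ is positive semidefinite as a variance matrix, and the matrix-determinant lemma applied to $\lambda\II - \fr$ shows that its largest eigenvalue does not exceed $\max_{i\in[\cc]}p_i \le 1$. For (ii), I will compute
\[
\Hess_\theta\varphi_{ij} = p_i\delta_{ij}(1+\hat\theta^i) - p_ip_j(1+\hat\theta^i+\hat\theta^j), \qquad \hat\theta^i := \theta^i - S,\ \ S := \sum\nolimits_{k\in[\cc]}p_k\theta^k,
\]
and exploit the key coordinate identity $\hat\theta^i = \log p_i + H(p)$, which follows from $\theta^i = \log(p_i/p_0)$, $\psi = -\log p_0$, and $S = -H(p) - \log p_0$, where $H(p) \in [0,\log c]$ is the categorical entropy.

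The hardest part will be extracting the specific constant $(c^2-1)/e = \cc(\cc+2)/e$ from the operator-norm estimate. My plan is to use the sharp one-dimensional bounds $|x\log x|\le 1/e$ and $x^2(\log x)^2 \le 1/e^2$ (both attained at $x = 1/e$), combined with $H\le \log c$ and $\sum p_i \le 1$, together with the decomposition $\Hess_\theta\varphi - \fr = \diag(q) - pq^\top - qp^\top$ where $q_i := p_i\hat\theta^i$, which splits the estimate into a diagonal contribution from $\diag(q)$ and a rank-two symmetric contribution from $pq^\top + qp^\top$. A Frobenius-type bound $\|A\|_{\mathrm{op}} \le \|A\|_F$ over the $\cc\times\cc$ index range should then produce the polynomial prefactor $\cc(\cc+2)$, while the sharp one-dimensional constants contribute the factor $1/e$.
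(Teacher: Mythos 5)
Your proposal is correct and reaches the paper's exact constants by a genuinely different route, so let me compare the two. The paper works with the raw split $\mc{B} = \fr + \frac{1}{2}\mc{A}$, $\mc{A}_{ij} = \pd_k \fr_{ij}\,\theta^k$, bounds the vector $\theta_i = \fr_{ij}\theta^j$ by shifting with $\gamma = \max_i \Theta^i$ and tracking $x \mapsto x e^{-x}$, converts the resulting entrywise bound $|\mc{A}_{ij}| < (c+1)/e$ into a spectral one via the row-sum norm $\rho(\mc{A}) \le \|\mc{A}\|_{\infty} \le \cc(c+1)/e$, and handles $\fr$ separately ($\fr \succeq 0$ below, Gerschgorin $\rho(\fr) < \frac{1}{2}$ above). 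Your reformulation $2\mc{B} = \fr + \Hess_\theta \HH$ checks out (it follows from $p_k\theta^k = \psi + \HH$, $\Hess_\theta\psi = \fr$, and the total symmetry of $\pd_i\pd_j\pd_k\psi$), as do your Hessian formula and the identity $\hat\theta^i = \log p_i + H(p)$; the latter is the real gain, since it replaces the paper's $\gamma$-shift and $\mathrm{xexp}$ bookkeeping by the two-line bound $q_i = p_i\hat\theta^i > p_i \log p_i \ge -1/e$ and $q_i < p_i H \le \log c$, which is exactly the paper's interval for $\theta_i$ after one more observation. That observation is where your sketch is over-optimistic: the slogan that ``the sharp one-dimensional constants contribute the factor $1/e$'' is only half true, because the entropy part of $q_i$ contributes $\log c$, not $1/e$; the per-entry constant $(c+1)/e$ emerges only via the elementary conversions $\log c \le c/e$ (equality at $c = e$) and $\frac{1}{4} < \frac{1}{e}$. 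Concretely, $|(\Hess_\theta\HH)_{ii}| = |p_i(1-p_i) + q_i(1-2p_i)| < \frac{1}{4} + \log c$ and $|(\Hess_\theta\HH)_{ij}| = |p_ip_j + q_i p_j + q_j p_i| < \frac{1}{4} + \log c$ for $i \ne j$, so every entry is $< (c+1)/e$ and your Frobenius step gives $\|\Hess_\theta\HH\|_{\mathrm{op}} \le \|\Hess_\theta\HH\|_{F} < \cc(c+1)/e = (c^2-1)/e$. Note that your claim (ii) is strictly stronger than what the paper proves (you bound $\fr + \mc{A}$, not $\mc{A}$), and it survives only because the slack $\frac{1}{e} - \frac{1}{4}$ absorbs the entries of $\fr$ — make these two conversions explicit rather than leaving them to ``should produce.'' Finally, claim (i) needs no matrix-determinant lemma: $\fr \preceq \Diag(p) \preceq \II$ suffices, and with $\mc{B} = \frac{1}{2}(\fr + \Hess_\theta\HH)$ you recover exactly $c_1 = -(c^2-1)/(2e)$ and $c_2 = \frac{1}{2}\big(1 + (c^2-1)/e\big)$.
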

\begin{proof}
In the proof we suppress the dependence on $\theta$ for all quantities.
Eq. \eqref{eq:fr_metric} yields
\begin{equation}
    \pd_k \fr_{ij} = \pd_{k}\pd_{i}\pd_{j} \psi         
\end{equation}
and implies the symmetry of $\mc{B}$ given by \eqref{eq:def-mcB-theta}. In the context of Hessian geometry, this relation is referred to as the \textit{Codazzi equation} \cite[Prop.~2.1]{shimaGeometryHessianStructures2007}. 
To establish the convexity bounds of $\mc B$, we compute first its entries. Introducing the notation
\begin{equation}\label{eq:proof-def-Theta-gamma}
    \gamma \coloneqq \max (0, \theta^1,\dots ,\theta^\cc),
\end{equation}
we define 
\begin{equation}\label{eq:proof-def-tilde-theta}
    \tilde{\theta}^{i} \coloneqq \theta^{i} - \gamma \leq 0 
    \qq{such that}
    \theta^{i} = \gamma + \tilde{\theta}^{i}.
\end{equation}
Recall from \eqref{eq:partial-psi} and \eqref{eq:def-Sc} the relations  
\begin{equation}
    \label{eq:proof-mcb-p}
    p_i = e^{\theta^i -\psi}, \; i\in[\cc] 
    \qquad\text{and}\qquad
    p_0 := 1 - \sum_{j \in [\cc]} p_j = e^{-\psi} 
\end{equation}
where by \eqref{eq:def-psi-partition} 
\begin{equation}\label{eq:proof-psi-logexp}
\psi = \logexp(\theta) = \log\bigg(1+\sum_{j\in[\cc]}e^{\theta^{j}}\bigg).
\end{equation}
Using $\gamma\geq 0$ and $\tilde\theta^{i},\; i\in[\cc]$ as defined by \eqref{eq:proof-def-Theta-gamma} and \eqref{eq:proof-def-tilde-theta}, we rewrite 
\begin{subequations}    
    \begin{align}
        \psi = \log\bigg(1+\sum_{j\in[\cc]}e^{\theta^{j}}\bigg) 
        &= 
        \log\bigg(e^{\gamma}(e^{-\gamma}+\sum_{j\in[\cc]}e^{\theta^{j}-\gamma})\bigg) \\
        &= 
        \gamma 
        +
        \log\bigg(e^{-\gamma} + \sum_{j \in [\cc]} e^{\tth^{i}}\bigg)
    \end{align}
\end{subequations}
and note that the set $\{ -\gamma,\tth^{1} , \dots, \tth^{\cc} \}$ contain only non-positive elements and that at least one of them must be 0 by \eqref{eq:proof-def-Theta-gamma}, \eqref{eq:proof-def-tilde-theta}. Thus $e^{\tilde{\theta}^{i}}\in (0,1),\;i\in[\cc]$ and $e^{-\gamma} \in (0,1)$, and at least one of them must be 1 such that
\begin{equation}\label{eq:proof-psi-bracket-bounds}
    \log\bigg(e^{-\gamma} + \sum_{j \in [\cc]} e^{\tth^{i}}\bigg) \in (0, \log c). 
\end{equation}
Consequently, we can write 
\begin{equation}\label{eq:proof-psi-gamma-xi}
    \psi = \gamma + \zeta \qq{for some} \zeta \in (0,\log c) 
\end{equation}
with
\begin{equation}
\zeta\to 0 \qquad\text{as}\qquad \gamma\to\infty,
\end{equation}
since then $\psi$ given by \eqref{eq:proof-psi-logexp} approaches the maximal component of the argument vector  \eqref{eq:proof-def-Theta-gamma}. We finally define the function
\begin{equation}
    \label{eq:proof-mcb-xexp}
    \mathrm{xexp} : \R_{>0} \to \R_{>0}, \qquad 
    x \mapsto x e^{-x} \;\in\; \Big(0,\frac{1}{e}\Big).
\end{equation} 
Now, rewriting the equation \eqref{eq:def-mcB-theta} defining $\mc{B}$ in the form
\begin{equation}\label{eq:proof-mcB-theta-mcA}
    \mc B = \fr + \frac{1}{2}\mc A, 
\end{equation}
we have
\begin{subequations}
\begin{align}\label{eq:proof-mcA-ij}
\mc{A}_{ij} 
&= \pd_k \fr_{ij} \theta ^{k}
= \theta_i \delta_{ij} - \theta_i p_{j} - \theta_j p_{i},
\intertext{with} \label{eq:proof-mcA-theta-i}
\theta_i 
&= \fr_{ij} \theta^j 
\overset{\eqref{eq:proof-def-tilde-theta}}{=} 
\gamma\sum_{j\in[\cc]} \fr_{ij} + \fr_{ij} \tth^j.
\end{align}
\end{subequations}
Regarding the first term, we compute
\begin{equation}
    \gamma \sum_{j\in[\cc]} \fr_{ij}
    = 
    \gamma 
    \sum_{j\in[\cc]} p_i(\delta_{ij} - p_j)
    = \gamma  p_i \Big(1 - \sum_{j\in[\cc]} p_j\Big) 
    = \gamma p_i p_0.
\end{equation}
Invoking the relations \eqref{eq:proof-def-tilde-theta}, \eqref{eq:proof-mcb-p} and \eqref{eq:proof-psi-gamma-xi}, we have
\begin{equation}\label{eq:proof-mcB-pi-expression}
    p_i = e^{\theta^{i} - \psi(\theta)}
    = 
    e^{\tth^i} e^{-\zeta}
    \qquad\text{and}\qquad
    p_0 = e^{-\psi(\theta)} = e^{-\gamma -\zeta} 
\end{equation}
and thus obtain for the first sum on the right-hand side of \eqref{eq:proof-mcA-theta-i}
\begin{equation}
    \gamma \sum_{j\in[\cc]} \fr_{ij} = 
    \gamma  p_i p_0 
    = \gamma e^{-\gamma} e^{-2\zeta } e^{\tilde \theta^{i}}
    \overset{\eqref{eq:proof-mcb-xexp}}{=} \mathrm{xexp}(\gamma) e^{-2\zeta } e^{\tilde \theta^{i}} \in\Big(0,\frac{1}{e}\Big),
\end{equation}
where the bounds follow from the bounds of \eqref{eq:proof-psi-bracket-bounds}, \eqref{eq:proof-mcb-xexp} and $e^{\tilde{\theta}^{i}}\in(0,1),\;i\in[\cc]$.

As for the second term on the right-hand side of \eqref{eq:proof-mcA-theta-i}, we have
\begin{subequations}
\begin{align} 
    \fr_{ij} \tilde \theta^{j} 
    &\overset{\eqref{eq:def-fr-ij}}{=}
    \tilde \theta^{j} p_i(\delta_{ij} - p_j)
    \overset{\eqref{eq:proof-mcB-pi-expression}}{=}
    \tilde \theta^{i} e^{\tilde \theta^{i} - \zeta } 
    - \sum_{j\in[\cc]} \tilde \theta^{j} e^{\tilde \theta^{j} - \zeta}  e^{\tilde \theta ^{i} - \zeta}
    \\
    &\overset{\tilde\theta^{i}\leq 0}{=}
    -\mathrm{xexp}( | \tth^i |) e^{-\zeta}
    + 
    \sum_{j\in[\cc]} \mathrm{xexp}(| \tth^j|)  e^{\tilde \theta ^{i}} e^{- 2\zeta}
    \quad
    \in
    \Big(-\frac{1}{e},\frac{\cc}{ e}\Big),
    \end{align}
\end{subequations}
Putting together, we obtain for \eqref{eq:proof-mcA-theta-i}
\begin{equation}
\theta_i = \gamma \sum_j \fr_{ij}  +  \tilde \theta^{j} \fr_{ij} \in 
\Big(-\frac{1}{e},\frac{c}{e}\Big), \quad c = \cc + 1.
\end{equation}
From $p_i \in (0,1)$ follows then
\begin{equation}
\mc{A}_{ij} 
\overset{\eqref{eq:proof-mcA-ij}}{=} 
\theta_i \delta_{ij} - \theta_i p_{j} - \theta_j p_{i}
\in
    \Big(-\frac{c+1}{e},\frac{c+1}{e}\Big)
    =: (-q, q),\qquad q = \frac{c+1}{e}
\end{equation}
which implies $\|\mc{A}\|_{\infty}\leq \cc q$ and in turn the bounds of the spectral radius of $\mc{A}$
\begin{equation}
\rho\big(\mc{A}\big) \leq \|\mc{A}\|_{\infty} \leq \cc q
\qquad\implies\qquad
-\cc q \II \preceq \mc{A} \preceq \cc q \II.
\end{equation}

\vspace{0.2cm}
Returning to \eqref{eq:proof-mcB-theta-mcA}, we infer
\begin{equation}
\forall v\in\R^{\cc},\qquad
\mc{B}_{ij} v^{i}v^{j}
= \fr_{ij} v^{i}v^{j} + \frac{1}{2}\mc{A}_{ij} v^{i}v^{j}
\overset{\fr\succeq 0}{\geq} \frac{1}{2}\mc{A}_{ij} v^{i}v^{j} \geq -\frac{\cc q}{2}\|v\|^{2}
\end{equation}
which gives the lower bound
\begin{equation}
\mc{B} \succeq -\frac{\cc q}{2} \II.
\end{equation}
As for an upper bound, we first note that with $p_{i}\in(0,1),\; i\in[\cc]$,
\begin{subequations}
\begin{align}
\fr_{ii} &= p_{i}-p_{i}^{2} 
= p_{i}(1-p_{i}) \in\Big(0,\frac{1}{4}\Big),\qquad i\in[\cc],
\\
\sum_{j\in[\cc]\setminus\{i\}}|\fr_{ij}|
&= p_{i}\sum_{j\in[\cc]\setminus\{i\}}p_{j}
= p_{i} (1-p_{0}-p_{i})
\leq p_{i}(1-p_{i}) \in\Big(0,\frac{1}{4}\Big),\qquad i\in[\cc]
\end{align}
\end{subequations}
such that by Gerschgorin's circle theorem \cite[Thm.~6.1.1]{Horn:2013aa}
\begin{equation}
\rho\big(\fr\big) \in \Big(0,\frac{1}{2}\Big).
\end{equation}
Consequently,
\begin{equation}
\mc{B}_{ij} v^{i} v^{j}
\leq \frac{1}{2}\|v\|^{2} + \frac{1}{2}\mc{A}_{ij} v^{i} v^{j}
\leq \frac{1}{2}(1 + \cc q) \|v\|^{2},
\end{equation}
that is
\begin{equation}
\mc{B} \preceq \frac{1}{2}(1 + \cc q) \II
\end{equation}
which together with
\begin{equation}
    \cc q = (c-1)\frac{c+1}{e} = \frac{c^{2}-1}{e}
\end{equation}
concludes the proof.
\end{proof}
\vspace{0.5cm}
\noindent
We are now in a position to devise a Lyapunov functional for the sigma flow.
\begin{proposition}[Lyapunov functional]
\label{prop:lyapunov}
Let $\HH\colon\S_{C}\to \R$ be given by \eqref{eq:HH} and $\varepsilon > 0$ be fixed. Then the functional 
\begin{equation}\label{eq:def-lyapunov}
    \Phi : C^{\infty }(M,\S_c) \to  \mathbb{R}_{\geq}, 
    \qquad  
    \Phi (\mfk P) = \int_M \big(\HH(\vP) + \frac{\varepsilon}{2} \delta _{ij} \vt^{i} \vt^{j} \big)
    \quad 
    \vt = \chi_{e} \circ \vP : M \to \R^{\cc}
\end{equation}
is a Lyapunov functional for the sigma flow \eqref{eq:sigmaflow} if 
\begin{equation}\label{eq:Lyapunov-def-beta}
    \varepsilon  + c_{1} = \beta  > 0,\qquad  c_{1}=-\frac{c ^{2} -1}{2 e}, 
\end{equation}   
with $c_{1}$ from \eqref{eq:bounds-mcB}.
\end{proposition}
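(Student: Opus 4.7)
The plan is to show $\frac{d}{dt}\Phi(\vS_t) \leq 0$ along any smooth solution of \eqref{eq:sigmaflow}. First, by the chain rule, differentiating the integrand of $\Phi$ with respect to the $\theta$-coordinate expression $\vt$, and using $\pd_i \HH = \fr_{ij}\theta^j$ from \eqref{eq:partial-vphi}, gives
\begin{equation*}
\frac{d}{dt}\Phi(\vS_t) \;=\; \int_M \bigl(\fr_{ij}(\vt) + \veps\delta_{ij}\bigr)\vt^j\,\partial_t\vt^i \;=\; \int_M (\fr_\veps)_{ij}\vt^j\,\partial_t\vt^i.
\end{equation*}
I then substitute the sigma flow in coordinates. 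Writing the tension field via the regularized Christoffel symbols from Proposition \ref{prop:Gamma-Eps},
\begin{equation*}
\partial_t\vt^i = \Delta_{h_t}\vt^i + (\fr_\veps)^{il}\Gamma_{ljk}\,h_t^{\mu\nu}\partial_\mu\vt^j\partial_\nu\vt^k,
\end{equation*}
the key cancellation $(\fr_\veps)_{ij}(\fr_\veps)^{il}=\delta^l_j$ reduces the nonlinear contribution to $\Gamma_{jkl}\vt^j = \tfrac{1}{2}\pd_j\fr_{kl}\vt^j$, by the Codazzi symmetry from Proposition \ref{prop:Gamma-Eps}.

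The core of the argument is the treatment of the Laplace--Beltrami term. Since $M$ is closed, integration by parts (as in \eqref{eq:laplace-adjoint}), combined with $d[(\fr_\veps)_{ij}\vt^j] = \pd_k\fr_{ij}\vt^j\,d\vt^k + (\fr_\veps)_{ij}\,d\vt^j$, yields
\begin{equation*}
\int_M (\fr_\veps)_{ij}\vt^j\,\Delta_{h_t}\vt^i
= -\int_M \pd_k\fr_{ij}\vt^j\,\langle d\vt^k,d\vt^i\rangle_{h_t}\;-\;\int_M (\fr_\veps)_{ij}\langle d\vt^i,d\vt^j\rangle_{h_t}.
\end{equation*}
Using the full symmetry of $\pd_k\fr_{ij}=\pd_i\pd_j\pd_k\psi$ together with the symmetry of $\langle d\vt^k,d\vt^i\rangle_{h_t}$ in $(k,i)$, the boundary-type term and the Christoffel term combine with coefficients $-1$ and $+\tfrac{1}{2}$ into $-\tfrac{1}{2}\pd_k\fr_{ij}\vt^k\,\langle d\vt^i,d\vt^j\rangle_{h_t}$. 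Reading off the matrix $\mc{B}$ from Lemma \ref{lem:b}, I arrive at
\begin{equation*}
\frac{d}{dt}\Phi(\vS_t) = -\int_M \bigl(\mc{B}_{ij}(\vt) + \veps\delta_{ij}\bigr)\,h_t^{\mu\nu}\partial_\mu\vt^i\partial_\nu\vt^j.
\end{equation*}

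To conclude, I invoke Lemma \ref{lem:b}: $\mc{B}(\vt)+\veps\II \succeq (c_1+\veps)\II = \beta\II \succ 0$ pointwise. On the other hand, for every $x\in M$, the Gram-type matrix $A^{ij}(x):=h_t^{\mu\nu}(x)\partial_\mu\vt^i(x)\partial_\nu\vt^j(x)$ is positive semi-definite, since for any $v\in\R^\cc$ the contraction $A^{ij}v_iv_j$ equals $\|d(v_i\vt^i)\|_{h_t}^2\geq 0$ by positive definiteness of $h_t^{-1}$. The trace inequality $\tr\bigl(A(\mc{B}+\veps\II)\bigr)\geq \beta\,\tr(A)$, valid for symmetric $A\succeq 0$ and $\mc{B}+\veps\II\succeq\beta\II$, then gives $\frac{d}{dt}\Phi \leq -\beta\int_M \sum_i \|d\vt^i\|_{h_t}^2 \leq 0$, and $\Phi$ is bounded below by $-|M|\log c$ since $\HH \geq -\log c$. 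The main obstacle is the precise index bookkeeping that isolates exactly the matrix $\mc{B}$ of Lemma \ref{lem:b}: the critical factor $\tfrac{1}{2}$ in $\mc{B}_{ij}=\fr_{ij}+\tfrac{1}{2}\pd_k\fr_{ij}\theta^k$ emerges only after using Codazzi symmetry to match the integration-by-parts remainder against the Christoffel contribution, where otherwise the two cubic terms would not align.
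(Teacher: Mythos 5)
Your proof is correct and follows essentially the same route as the paper: differentiate $\Phi$ to get $\int_M(\fr_\veps)_{ij}\vt^j\partial_t\vt^i$, substitute the regularized tension field, integrate the Laplace--Beltrami term by parts, use the Codazzi symmetry $\pd_k\fr_{ij}=2\Gamma_{ijk}$ to assemble exactly the matrix $\mc{B}+\veps\II$ of Lemma \ref{lem:b}, and conclude with the trace inequality and the lower bound $\mc{B}+\veps\II\succeq\beta\II$. The only difference is that the paper goes one step further, expanding $\vt^i$ in eigenfunctions of $\Delta_{h_t}$ and combining the uniform bound $\mc{O}(\cdot)\succeq C(\mc{O})\II$ with a Poincar\'e inequality to extract a time-uniform strict decay rate (strict decrease whenever $\vS_t$ is non-constant, with a constant independent of the evolving metric $h_t$), whereas your pointwise bound $-\beta\int_M\sum_i\|\dd\vt^i\|_{h_t}^2\leq 0$ yields monotone decrease, which suffices for the stated Lyapunov property.
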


\begin{proof}
Let $\vS\in C^{\infty}(M_T,\S_c)$ solve the sigma flow system \eqref{eq:sigmaflow} for $T>0$ fixed and let $\vt_t$ be the $\theta$-coordinate functions of $\vS_t$ and $h_t = \mc{O}(\vS_t)$ the time dependent metric. In order to show that $\Phi$ is a Lyapunov functional, we show that $t \mapsto \Phi(\vS_t)$ is bounded from below, continuous, differentiable and monotonically decreasing in time. 
    
Due to convexity of the integrand function ($\HH$ is the negative entropy), the functional is bounded from below by 
\begin{equation}
\Phi(0) = \int_M \HH(0) \overset{\eqref{eq:HH}}{=} - \int_M \psi(0) \overset{\eqref{eq:def-psi-partition}}{=} - \mathrm{vol}(M)\log c,
\end{equation}
where $0 \in C^{\infty}(M,\S_c)$ is the $0$ function in the $\theta$-coordinate system. As for the continuity at $0$, we have
\begin{subequations}
\begin{align}
\big|\Phi(\vS_t) &- \Phi(\vS_0)\big|
        \leq 
        \mathrm{vol}(M) \max_{x\in M}
        \Big(
            |\HH(\vt_t(x)) - \HH(\vt_0(x)) |
            + 
            | \delta_{ij} \vt_{t}^{i}(x) \vt_{t}^{j}(x) -  \delta_{ij} \vt_{0}^{i}(x) \vt_{0}^{j}(x) | 
        \Big)
        \\
        &\leq \mathrm{vol}(M) \max_{x\in M}
        \Big(
            |\HH(\vt_t(x)) - \HH(\vt_0(x)) |
            + \big(\|\vt_{t}(x)\|+\|\vt_{0}(x)\|\big)\big| 
            \|\vt_{t}(x)\|-\|\vt_{0}(x)\|\big|
           \Big).
\end{align}
\end{subequations}
Since $\vphi$ is continuous and $M$ is compact, the right-hand side goes to $0$ as $\vt_{t}\to\vt_{0}$, which shows the continuity of $\Phi$ at $0$. Further, note that the integrand of $\Phi(\vS_t)$ is continuously differentiable in time. The compactness of $M$ then implies that $\Phi(\vS_t)$ is differentiable for all $t \in (0,T)$. 

We do not indicate the time dependence of quantities in the rest of this proof to alleviate notation. We show now that $\Phi$ is monotonically decreasing in time.
\begin{subequations}
\begin{align}
    \dv{t} \Phi(\vS)
    &= \int _M \pd_i \HH \pdv{t} \vt^{i}+ \varepsilon \delta _{ij} \vt^{j} \pdv{t} \vt^{i}\\ 
    \label{eq:dt-Phi-proof}
    &\overset{\substack{\eqref{eq:HH} \\ \eqref{eq:partial-vphi}}}{=} \int _M (\fr_{ij} + \varepsilon \delta _{ij}) \vt ^{j} \pdv{t} \vt ^{i} 
    = 
    \int _M (\fr_{\varepsilon } )_{ij} \vt ^{j} \pdv{t} \vt ^{i} 
    \intertext{and using $\partial_{t}\vt^{i}=\tau(\vS;\fr_{\varepsilon},h)^{i} = \Delta_h \vt^i + (\Gamma_{\varepsilon } )^i_{jk}\Braket{\dd \vt^j , \dd \vt^k}_h$ by \eqref{eq:proof-tension-locally} (with the metric $\fr_{\veps}$ in place of $\fr$)
    }
    &= \int_M (\fr_{\varepsilon })_{ij} \vt^{j} \Delta _h \vt ^{i} +  \int _M (\fr_{\varepsilon} )_{ij} (\Gamma_{\varepsilon})^{i}_{kl} \la \dd \vt^{k}, \dd \vt ^{l} \ra_ h \vt^{j}.  
    \intertext{
            By Prop.~\ref{prop:Gamma-Eps} we have $(\fr_{\varepsilon} )_{ij} (\Gamma_{\varepsilon})^{i}_{kl} = \Gamma_{klj}$ and $\Gamma_{klj} = \Gamma_{jkl}$ for any order of the indices. Thus
    } \label{eq:dt-Phi-b-proof}
    \dv{t} \Phi(\vS) &= 
    \int_M (\fr_{\varepsilon})_{ij} \vt^{j} \Delta _h \vt ^{i} +  \int _M \Gamma_{ijk} \vt^{i} \la \dd \vt^{j} ,\dd \vt ^{k}\ra_h.  
\end{align}
\end{subequations}
Regarding the first integral on the right-hand side, we apply partial integration and use again the volume measure $\omega_h = \sqrt{ |h| } \dd x$ explicitly
\begin{subequations}
\begin{align}
    \int_M (\fr_{\varepsilon})_{ij} \vt^{j} \Delta _h \vt ^{i} 
    &\overset{\eqref{eq:int-M}}{=}
    \int_M (\fr_{\varepsilon})_{ij} \vt^{i} \Delta _h \vt ^{j} \sqrt{ |h| } \dd x
    \overset{\eqref{eq:delta-EH}}{=} \int_M (\fr_\varepsilon )_{ij} \vt^i \partial_\nu \qty(h^{\mu \nu} \sqrt{|h|}\partial_\mu \vt^j) \dd x \\
    &= 
    -\int_M 
    \partial_\nu (\fr_\varepsilon )_{ij} \vt^i  h^{\mu \nu} \partial_\mu \vt^j \sqrt{|h|}\dd x
    - \int_ M 
    (\fr_\varepsilon )_{ij} \partial_\nu  \vt^i  h^{\mu \nu} \partial_\mu \vt^j \sqrt{|h|} \dd x. 
\end{align}
\end{subequations}
Taking into account the chain rule $\partial_{\nu}(\fr_{\veps})_{ij} = \partial_\nu \fr_{ij} = \pd_k \fr_{ij} \partial_\nu\vt^k$, we obtain
\begin{subequations}
\begin{align}
\dv{t} \Phi(\vS)
&= - \int_M \pd_k \fr_{ij} \la \dd \vt^{j} , \dd \vt^{k} \ra_h \vt^{i}
- \int_M (\fr_\varepsilon)_{ij} \la \dd \vt^{j} , \dd \vt ^{i} \ra_h
+ \int _M \Gamma_{ijk} \vt^{i} \la \dd \vt^{j} , \dd \vt ^{k} \ra_h
\intertext{and using the relation $\pd_k \fr_{ij} = 2\Gamma_{kij} = 2\Gamma_{ijk}$}
\dv{t} \Phi(\vS) 
&= - \int_M (\fr_\varepsilon)_{ij} \la \dd \vt^{j} , \dd \vt ^{i} \ra_h
- \frac{1}{2} \int_M \pd_k \fr_{ij} \la \dd \vt^{j} , \dd \vt^{k} \ra_h \vt^{i}
\\
&= -\int_{M}\Big(
\fr_{ij}+\veps\delta_{ij} + \frac{1}{2}\vt^{k}\pd_k \fr_{ij}\Big) \la \dd \vt^{i} , \dd \vt ^{j} \ra_h
\\
\label{eq:dt-Phi-proof-B}
&\overset{\eqref{eq:def-mcB-theta}}{=}
-\int_{M}\big(\mc{B}(\vt)+\veps\II\big)_{ij} \la \dd \vt^{i} , \dd \vt ^{j} \ra_h.
    \end{align}
\end{subequations}
The integrand has the form $\tr(AB)$ where $A,B$ are symmetric positive semi-definite matrices ($\varepsilon$ was chosen so that this is true). Invoking the lower bound $\lambda_{\min}(B) \tr A \leq \tr(AB)$ implied by a trace inequality \cite[p.~341,~H.1.h]{Marshall:2011vn} and taking into account \eqref{eq:Lyapunov-def-beta} gives
\begin{equation}\label{eq:proof-dtPhiqt}
    \dv{t} \Phi(\vS) \leq -\beta  \int_M \delta _{ij}\la \dd \vt^{i} , \dd \vt^{j} \ra_h.
\end{equation}
By virtue of Lemma \ref{lem:LB-spectrum} and \eqref{eq:Delta-h-spectrum}, we expand the functions $\vt^{i}$ in the orthonormal basis $(\phi_{n})_{n\in\N_{0}}$ provided by the Laplacian $\Delta_{h}$,
\begin{equation}\label{eq:vti-Laplacian-decomposition}
\vt^{i} = \sum_{n\geq 0} a^{i}_{n}\phi_{n},\qquad
a^{i}_{n} = \int_{M}\vt^{i}\phi_{n}
\end{equation}
to obtain
\begin{subequations}\label{eq:dt-Phi-proof-laplacian-adjoint}
\begin{align}
\int_M \la \dd \vt^{i} , \dd \vt^{j} \ra_h
&\overset{\eqref{eq:laplace-adjoint}}{=} -\int_{M}\vt^{i}\Delta_{h}\vt^{j}
= -\sum_{n_{1},n_{2}\geq 0} a_{n_{1}}^{i} a_{n_{2}}^{j}\lambda_{n_{2}}\underbrace{\int_{M}\phi_{n_{1}}\phi_{n_{2}}}_{=\delta_{n_{1} n_{2}}}
\\
&= -\sum_{n\geq 0} a^{i}_{n} a^{j}_{n}\lambda_{n}
\overset{\lambda_{0}=0}{=} -\sum_{n\geq 1} a^{i}_{n} a^{j}_{n}\lambda_{n}.
\end{align}
\end{subequations}
Returning to \eqref{eq:proof-dtPhiqt}, we thus have
\begin{subequations}
\begin{align}\label{eq:proof-dotPhi}
\dv{t} \Phi(\vS) 
&\leq \beta\sum_{n\geq 1}(\delta_{ij} a^{i}_{n} a^{j}_{n})\lambda_{n}
 = \beta\sum_{n\geq 1}\|a_{n}\|^{2}\lambda_{n}
 \intertext{with}
a_{n} &= (a_{n}^{1},\dotsc, a_{n}^{\cc})^{\T}.
\end{align}
\end{subequations}
The eigenvalues $\lambda_n$ of the Laplacian $\Delta_{h}$ depend on the state $\vS$ via the coupling $h = \mc{O}(\vS)$. We can however bound the metric uniformly from below by $C(\mc{O})\II, c > 0$ due to \eqref{eq:ass-uniform-boundedness}. This allows to give a uniform bound on $\lambda_n$ as follows.
For any $n \geq 1$, invoke the identity \cite[Eq.~(46)]{chavelEigenvaluesRiemannianGeometry1984}
\begin{equation}
    \abs{\lambda_n} = \int_M h^{\mu \nu} \partial_\mu \phi_n \partial_\nu \phi_n
\end{equation}  
with $\int_M \phi_n = 0$, which follows from the fact that eigenfunctions of the Laplacian are either constant or have mean 0 and the constant eigenfunctions are associated with $\lambda_0 = 0$. Since we know that $h^{-1} \succeq \frac{1}{C(\mc{O})} \II$ we can apply a trace inequality as in \eqref{eq:proof-dtPhiqt} 
\begin{equation}
    \abs{\lambda_n} \geq \frac{1}{C(\mc{O})} \int_M \delta^{\mu \nu} \partial_\mu \phi_n 
    \partial_\nu \phi_n = \frac{1}{C(\mc{O})} \norm{\dd{\phi_{n}}}_{\delta}^{2}.
\end{equation}   
Then, the Poincaré lemma \cite[Cor.~A.1.1]{jostRiemannianGeometryGeometric2017} guarantees the existence of a constant $C(M) > 0$ such that
\begin{equation}
    \norm{\phi_n}_{L^{2}(M)} \leq C(M) \norm{\dd \phi_n}_{\delta}
\end{equation}
which implies
\begin{equation}
    \abs{\lambda_n} 
    \geq 
    \frac{1}{C(\mc{O})} \norm{\dd \phi_n}_{\delta}^{2}
    \geq 
    \frac{1}{C(M)^{2} C(\mc{O})}
    \norm{\phi_n}^{2}_{L^{2}(M)}. 
\end{equation}
By normalization of the eigenfunctions $\norm{\phi_n}_{L^{2}(M)} = 1$ we 
we thus obtain $\lambda_{n}< -\frac{1}{C(M) C(\mc{O})}$. From \eqref{eq:proof-dotPhi}, we finally infer
\begin{equation}
    \dv{t} \Phi(\vq) \leq 
    -\frac{\beta}{C(M)^{2} C(\mc{O})}
    \sum_{n\geq 1}\|a_{n}\|^{2}. 
\end{equation}
Because $\|a_{n}\|$ measures the non-constant part of $\vq$ if $n>0$, it follows that $\Phi(\vS)$ monotonically decays as long as $\sum_{n\geq 1}\|a_{n}\|^{2} > 0$, and $\sum_{n\geq 1}\|a_{n}\|^{2} = 0$ holds if and only if $\vS$ is constant.
\end{proof}
\begin{remark}[sigma flow: existence and convergence]\label{rem:existence}
Proposition \ref{prop:lyapunov} shows that the convex functional $\Phi$ given by \eqref{eq:def-lyapunov} is monotonically decreasing as long as $\vq_{t}$ is not constant. After discretizing the domain manifold $M$ which is required for numerical experiments, this characterizes the sigma flow as proper geometric diffusion process, i.e.~$\lim_{t\to\infty} \vq_{t}$ is constant. However, to rigorously  show  existence and global convergence in the general case, a weak set-up with a feasible set of $\vq_{t}$ containing $C^{\infty}(M,\S_c)$ as dense subspace would have to be considered, as discussed in Section \ref{sec:Harmonic-Maps-Gradient-Flows}.
\end{remark}

\begin{remark}[harmonic maps into spheres]
    A similar conclusion could have been drawn along a different line of reasoning, when considering that harmonic maps into the sphere orthant must be constant, as implied by the general theory of harmonic maps into spheres \cite{solomonHarmonicMapsSpheres1985}. The only point to note is  that $\S_c$ is isometric to the positive orthant of a sphere. 
\end{remark}

\subsection{Sigma-\texorpdfstring{$\alpha$}{ɑ} Flow}\label{sec:alpha-SigmaFlow}
The sigma flow system \eqref{eq:sigmaflow} involves the metric connection $\Gamma = \,^{0}\Gamma $ of the Fisher-Rao metric. In this section, we consider the extension to the family of $\alpha$-connections $\,^{\alpha}\Gamma$ from information geometry given by \eqref{eq:def-alpha-Gamma}. As a consequence, the tension field \eqref{eq:tension_field} given explicitly by \eqref{eq:tension-f-explicitly} will take the form (cf.~also \eqref{eq:proof-tension-locally})
\begin{equation}\label{eq:tau-alpha-sigma}
\,^{\alpha}\tau(\vS_t;\fr,h_{t})^{i} = \Delta_{h_{t}}\vt^{i} + \,^{\alpha}\Gamma^{i}_{jk}\la \dd\vt^{j},\dd\vt^{k}\ra_{h_{t}},\qquad i\in[\cc].
\end{equation}

\begin{definition}[sigma-$\alpha$ flow]\label{def:sigma-alpha-flow}
    Let $\mfk{P} \in C^{\infty}(M,\S_c)$ and $\alpha \in \R, T>0$ be given. The sigma-$\alpha$ flow is generated by the tension field \eqref{eq:tau-alpha-sigma} and the system 
    \begin{equation}
        \label{eq:asgmflw}
        \tag{$\sigma$-$\alpha$ flow}
        \vS \in C^{\infty }(M_{T},\S_c) \qq{s.t.} 
        \begin{dcases}
            \partial_t \vS_t = \,^{\alpha}\tau (\vS_t;\fr,h_t), \quad & \forall t \in (0,T), \\
            h_t = \mc{O}( \vS_t) ,\quad & \forall t \in (0,T),\\ 
            \vq_0  = \mfk{P} ,
        \end{dcases}
    \end{equation}
    where $\mc{O}$ satisfies the uniform positive definiteness condition \eqref{eq:ass-uniform-boundedness}. 
\end{definition}
The following proposition generalizes Proposition \ref{prop:sigmaflow-coordinates} accordingly. It reveals, in particular, that the sigma-$\alpha$ flow combines two linear flows corresponding to the two extreme cases of the $\alpha$-connections, viz.~the case $\alpha=1$,
\begin{equation}
    \partial _t \vt ^{i} = \Delta _{h_t} \vt ^{i},\qquad i\in[\cc]
\end{equation}
and the case $\alpha=-1$,
\begin{equation}
    \partial _t \vp_i = \Delta _{h_t} \vp_i,\qquad i\in[\cc].
\end{equation}

\begin{proposition}[sigma-$\alpha$ flow in coordinates]
    The first equation of the system \eqref{eq:asgmflw} is given with respect to the $\theta $ coordinates by
    \begin{equation}\label{eq:alpha-sigma-coordinates-e}
        \partial _t \vt^{i} = \frac{1+\alpha }{2} \Delta _{h_t} \vt^{i} + \frac{1-\alpha }{2} \fr^{ij} \Delta _{h_t} \vp_j,\qquad i\in[\cc]
    \end{equation}
    and with respect to the $p$ coordinates by
    \begin{equation}\label{eq:alpha-sigma-coordinates-m}
        \partial _t \vp_i = \frac{1 + \alpha }{2} \fr_{ij} \Delta _{h_t} \vt ^{j} + \frac{1-\alpha }{2} \Delta _{h_t} \vp_i,\qquad i\in[\cc].
    \end{equation}
\end{proposition}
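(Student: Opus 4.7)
The plan is to derive both coordinate expressions by combining the $\alpha$-Christoffel symbol formula \eqref{eq:def-alpha-Gamma} with the identity already established in Proposition \ref{prop:tau-vp-alternative}. The key observation is that the $\alpha$-connection only rescales the single Christoffel term in the tension field, while the Laplace-Beltrami part is untouched, so we can reuse the algebraic identity extracted from the $\alpha=0$ case.

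First I would write out \eqref{eq:tau-alpha-sigma} using \eqref{eq:def-alpha-Gamma}, so that
\begin{equation*}
    \,^{\alpha}\tau(\vS_t; \fr, h_t)^{i} = \Delta_{h_t} \vt^{i} + (1-\alpha)\,\Gamma^{i}_{jk}\la \dd \vt^{j}, \dd \vt^{k}\ra_{h_t}.
\end{equation*}
The standard ($\alpha=0$) tension field admits the two equivalent expressions \eqref{eq:proof-tension-locally} and \eqref{eq:tau-vp-alternative}, namely
\begin{equation*}
    \Delta_{h_t} \vt^{i} + \Gamma^{i}_{jk}\la \dd \vt^{j}, \dd \vt^{k}\ra_{h_t} = \tfrac{1}{2}\bigl(\Delta_{h_t}\vt^{i} + \fr^{ij}\Delta_{h_t}\vp_j\bigr).
\end{equation*}
Solving this for the Christoffel term yields the key identity
\begin{equation*}
    \Gamma^{i}_{jk}\la \dd \vt^{j}, \dd \vt^{k}\ra_{h_t} = \tfrac{1}{2}\bigl(\fr^{ij}\Delta_{h_t}\vp_j - \Delta_{h_t}\vt^{i}\bigr).
\end{equation*}

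Substituting this back into the expression for $\,^{\alpha}\tau^{i}$ and collecting the $\Delta_{h_t}\vt^{i}$ terms gives
\begin{equation*}
    \,^{\alpha}\tau(\vS_t; \fr, h_t)^{i} = \Bigl(1-\tfrac{1-\alpha}{2}\Bigr)\Delta_{h_t}\vt^{i} + \tfrac{1-\alpha}{2}\fr^{ij}\Delta_{h_t}\vp_j = \tfrac{1+\alpha}{2}\Delta_{h_t}\vt^{i} + \tfrac{1-\alpha}{2}\fr^{ij}\Delta_{h_t}\vp_j,
\end{equation*}
which is precisely \eqref{eq:alpha-sigma-coordinates-e}. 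To obtain \eqref{eq:alpha-sigma-coordinates-m}, I would apply the chain rule in the form \eqref{eq:proof-chain-rule-vp} with $\partial_t$ in place of $\partial_\mu$, i.e.~$\partial_t \vp_i = \fr_{ij}\,\partial_t \vt^{j}$, and substitute \eqref{eq:alpha-sigma-coordinates-e} into the right-hand side. Using $\fr_{ij}\fr^{jk} = \delta_i^{k}$ to contract the mixed term then produces \eqref{eq:alpha-sigma-coordinates-m} immediately.

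Since every ingredient is either a definition or a result already proven earlier in the excerpt, there is no real obstacle; the only thing to be careful about is the sign bookkeeping when solving for $\Gamma^{i}_{jk}\la \dd \vt^{j},\dd \vt^{k}\ra_{h_t}$ and making sure the factor $(1-\alpha)/2$ combines correctly with the constant $1$ coming from $\Delta_{h_t}\vt^{i}$ in the tension field.
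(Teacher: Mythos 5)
Your proof is correct, and it takes a somewhat different route from the paper's. The paper's proof does not invoke Proposition \ref{prop:tau-vp-alternative} at all: it verifies \eqref{eq:alpha-sigma-coordinates-e} by a direct coordinate computation, expanding $\fr^{ij}\Delta_{h_t}\vp_j$ via the explicit formula \eqref{eq:delta-EH} for $\Delta_{h_t}$, substituting $\partial_\nu \vp_j = \fr_{jk}\partial_\nu\vt^k$, applying the product rule, and recombining the two resulting terms using $\fr^{ij}\fr_{jk}=\delta_{ik}$ and $\Gamma^i_{jk}=\frac{1}{2}\fr^{il}\pd_l\fr_{jk}$ to recover $\Delta_{h_t}\vt^i + (1-\alpha)\Gamma^i_{jk}\la\dd\vt^j,\dd\vt^k\ra_{h_t}$. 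You instead equate the two already-established expressions \eqref{eq:proof-tension-locally} and \eqref{eq:tau-vp-alternative} for the $\alpha=0$ tension field, solve for the Christoffel term $\Gamma^i_{jk}\la\dd\vt^j,\dd\vt^k\ra_{h_t} = \frac{1}{2}\bigl(\fr^{ij}\Delta_{h_t}\vp_j - \Delta_{h_t}\vt^i\bigr)$, and substitute; this is legitimate because \eqref{eq:tau-vp-alternative} is stated as a pointwise identity (the variational argument in its proof holds for arbitrary test variations), and the $\alpha$-flow uses the same unregularized metric $\fr$. What your approach buys is brevity and a transparent interpolation structure: it makes manifest that the sigma-$\alpha$ flow is the affine combination $\frac{1+\alpha}{2}$ times the $\alpha=1$ flow plus $\frac{1-\alpha}{2}$ times the $\alpha=-1$ flow, which the paper only remarks on informally before the proposition. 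What the paper's computation buys is self-containedness: it re-derives the underlying identity $\fr^{ij}\Delta_h\vp_j = \Delta_h\vt^i + 2\Gamma^i_{jk}\la\dd\vt^j,\dd\vt^k\ra_h$ pointwise without routing through the integral argument behind Proposition \ref{prop:tau-vp-alternative}. Your treatment of \eqref{eq:alpha-sigma-coordinates-m} via $\partial_t\vp_i=\fr_{ij}\partial_t\vt^j$ and the contraction $\fr_{ij}\fr^{jk}=\delta_i^k$ coincides exactly with the paper's.
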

\begin{proof}
By \eqref{eq:def-alpha-Gamma}, Eq.~\eqref{eq:tau-alpha-sigma} reads
    \begin{equation}\label{eq:tau-alpha-sigma-rhs-proof}
        \,^{\alpha}\tau(\vS_t;\fr,h_t)^{i} 
        = \Delta_{h_t} \vt^{i} + (1-\alpha)\,\Gamma ^{i}_{jk} \la \dd \vt ^{j} , \dd \vt ^{k} \ra_{h_t}
    \end{equation}
    and we show below equivalence to \eqref{eq:alpha-sigma-coordinates-e}. Equation \eqref{eq:alpha-sigma-coordinates-m} then follows directly using 
    \begin{equation}\label{eq:proof-partial-pi}
        \partial _\mu  \vp_i = \fr_{ij} \partial _\mu \vt ^{j}.
    \end{equation}
    Using
    \begin{equation}\label{eq:proof-Gamma-ijk}
        \Gamma ^{i}_{jk} = \frac{1}{2}\fr^{il} \pd_l \fr_{jk}
    \end{equation}
    we transform the right-hand side of \eqref{eq:alpha-sigma-coordinates-e} into \eqref{eq:tau-alpha-sigma-rhs-proof}.
    \begin{subequations}
    \begin{align}
        \frac{1 + \alpha }{2} \Delta _{h_t} \vt ^{i} + \frac{1-\alpha }{2} \fr^{ij} \Delta _{h_t} \vp_j 
        &\overset{\eqref{eq:delta-EH}}{=} 
        \frac{1 + \alpha }{2} \Delta _{h_t} \vt ^{i} 
        + 
        \frac{1-\alpha }{2} \fr^{ij} \frac{1}{\sqrt{|h_t|} } \partial _\mu ( 
            h_t^{\mu \nu } \sqrt{|h_t|}  \partial _\nu  \vp_j
        ) \\ 
        &\overset{\eqref{eq:proof-partial-pi}}{=} 
        \frac{1 + \alpha }{2} \Delta _{h_t} \vt ^{i} 
        + 
        \frac{1-\alpha }{2} \fr^{ij} \frac{1}{\sqrt{|h_t|} } \partial _\mu  (h_t^{\mu \nu } \sqrt{|h_t|}  \fr_{jk} \partial _\nu  \vt^k)  \\ 
        &= 
        \frac{1 + \alpha }{2} \Delta _{h_t} \vt ^{i} 
        + 
        \frac{1-\alpha }{2} \underbrace{\fr^{ij} \fr_{jk}}_{\delta_{ik}} \frac{1}{\sqrt{|h_t|} } \partial _\mu  (h_t^{\mu \nu } \sqrt{|h_t|}   \partial _\nu  \vt^k)  \\
        & \qquad + 
        \frac{1-\alpha }{2} \fr^{ij} \underbrace{\pd_l\fr_{jk} \partial _\mu \vt ^{l}}_{\partial_{\mu}\fr_{jk}}   \frac{1}{\sqrt{|h_t|} } h_t^{\mu \nu } \sqrt{|h_t|}   \partial _\nu  \vt^k  \\
        &= 
        \qty(\frac{1 + \alpha }{2} + \frac{1-\alpha }{2}) \Delta _{h_t} \vt ^{i} 
        + 
        (1 - \alpha)\frac{1}{2} \fr^{ij} \pd_l\fr_{jk} \la \dd   \vt ^{l}  , \dd   \vt^ k \ra_{h_t}  \\
        &\overset{\eqref{eq:proof-Gamma-ijk}}{=} 
        \Delta _{h_t} \vt ^{j} 
        + 
        (1 - \alpha) \Gamma ^{i}_{lk} \la \dd \vt ^{l}  , \dd    \vt^ k \ra_{h_t} .
    \end{align}
    \end{subequations}
\end{proof}

\begin{remark}[regularized metric $\fr_{\veps}$]\label{rem:fr-eps-entropic-flow}
    Convergence of these flows to constant solutions under the assumption of Proposition \ref{prop:lyapunov}, and with the reservation concerning the general case expressed as Remark \ref{rem:existence}, can be shown by minor adaption of the arguments. We omit the details but we note that, if the regularized metric $\fr_{\veps}$ given by \eqref{eq:def-fr-eps} is to be used instead of $\fr$, this entails the replacements
    \begin{equation}
        \fr \to \fr_\varepsilon , \quad  \Gamma \to \Gamma_\varepsilon,
        \qq{and}
        \HH(\vp) \to \HH(\vp) + \frac{\varepsilon}{2} \delta _{ij} \vt^i \vt ^j,
    \end{equation}
with $\fr_\varepsilon, \Gamma_\varepsilon$ defined by \eqref{eq:def-fr-eps}, \eqref{eq:def-Gamma-veps}.
\end{remark}

\subsection{Entropic Potential and Convergence to the Boundary}\label{sec:Entropic-Potential}
The geometric diffusion equations introduced so far produce constant solutions in the infinite time limit. This is at odds with the goal of achieving a labeling of observed data at every point $x\in M$, that is an assignment of a definite label. 
We modify the sigma flow system \eqref{eq:sigmaflow} to achieve such labelings by including a term that drives the flow to the boundary of the target manifold $(\S_c,\fr)$.

\begin{definition}[entropic harmonic energy]\label{def:entropic-harmony-energy}
    Let $m^2 > 0$ be fixed. We define the \textit{entropic harmonic energy} as 
    \begin{equation}\label{eq:entropic-harmony-energy}
        E_{\HH}: C^{\infty}(M,\S_c) \to  \R, \qquad 
        \mfk{P} \mapsto \int _M \qty(\frac{1}{2}\norm{\dd \mfk{P}}^2 - m^2\HH(\mfk{P})),
    \end{equation}
with $\HH$ given by \eqref{eq:HH}.
\end{definition}
As a consequence of including the entropy term, the expressions \eqref{eq:proof-tension-locally} and \eqref{eq:tau-vp-alternative} for the tension field in $\theta $ coordinates change to
\begin{subequations}\label{eq:tau-sigma-HH}
\begin{align}
    \tau_{\HH}(\vS_t;\fr,h)^{i} &= \Delta _h \vt ^{i} +  \Gamma ^{i}_{jk} \la \dd  \vt ^{j}  , \dd \vt ^{k} \ra_h + m^2 \vt ^{i}
    \\
    &= \frac{1}{2}(\Delta_h \vt^i + \fr^{ij} \Delta _h \vp_i  ) + m^2 \vt ^{i},\qquad i\in[\cc].
    \end{align}
\end{subequations}
See Remark \ref{rem:fr-eps-entropic-flow} for minor modifications if the metric $\fr$ is replaced by the $\varepsilon$-regularized metric $\fr_{\veps}$.

\begin{remark}[potentials in physics]
\label{rem:potentials-in-physics}
The modification of the sigma flow according to Definition \ref{def:entropic-harmony-energy} is reminiscent of adding a potential function to models of physical systems. In the present case, the potential $-\HH$ is concave, contrary to most of the common cases in physics. 
However, scenarios where locally concave potentials appear also in physics include the \textit{Higgs potential} \cite[Ch.~8]{hamiltonMathematicalGaugeTheory2017} or the \textit{Landau-Ginzburg potential} \cite[Ch.~11]{jostRiemannianGeometryGeometric2017}, where the potential has the shape of a `mexican hat' depicted by  Figure \ref{fig:mex-hat}. 
\begin{figure}[t]
\centering
\begin{subfigure}{0.3\textwidth}
\centering
    \includegraphics{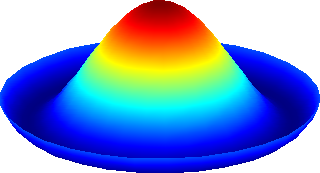}
    \caption{'Mexican hat' potential from the Ginzburg-Landau functional, which plays a major role for diffuse $\&$ threshold dynamics, cf.~\cite{Bertozzi:2016aa}.}
\end{subfigure}
\hfill
\begin{subfigure}{0.3\textwidth}
    \centering
    \includegraphics{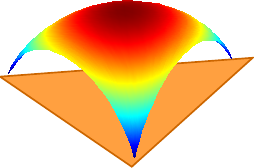}
        \caption{Entropic potential over the simplex $-\HH$, which is concave. This shape is desired because the PDE system should generate a flow towards the extreme points of the simplex.}
\end{subfigure}
\hfill
\begin{subfigure}{0.3\textwidth}
    \centering
    \includegraphics{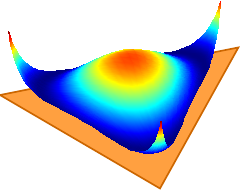}
        \caption{Adaptation of the entropic potential over the simplex to the 'mexican hat'-like potential $(0.5 - (\HH / \log(c) +1))^{2}$. Away from the barycentric point, the potential is now convex and consequently extreme points of the simplex are no longer stable stationary points.}
\end{subfigure}
\caption{Mexican hat potential and entropic potential on the simplex (colored orange).} 
\label{fig:mex-hat}
\end{figure}
A similar shape could be also achieved in the sigma flow setting by the replacement $-\HH \to (0.5 - (\HH / \log c + 1))^2$ producing a potential that is concave around the origin but convex when approaching the boundary of the simplex. 
    
The shape of the above-mentioned `mexican hat' potential may remind some readers of the `mexican hat' shaped convolution masks for edge detection in image data, generated by Laplacian-of-Gaussian operators, which have a long history in early computer vision and as physiological models of simple cells \cite{Marr:1980aa}. Our class of models introduced in this paper is PDE-based, however, rather than based on convolution followed by thresholding. Specifically, the metric $h_{t}$ on $M$ is coupled to the evolving state $\vS$, which may be used for -- in comparison to basic convolution and thresholding: sophisticated -- edge detection, as in \eqref{eq:PM-ht}, for instance. For a detailed study of the connection between models based on PDEs and on convolution, respectively, we refer to \cite{Buades:2006ue}, and to \cite{Bertozzi:2016aa} for advanced approaches combining diffusion and threshold dynamics. 
\end{remark}

\begin{definition}[entropic sigma flow]\label{def:entropic-sigma-flow}
    Let $\mfk{P} \in C^{\infty }(M,\S_c) ,\varepsilon > 0, m^{2} > 0$ be fixed. The regularized \textit{entropic sigma flow system} is given by 
    \begin{equation}
        \label{eq:hsigmaflow}
        \tag{$\sigma $-$\HH$-flow}
        \vS \in C^{\infty }(M_T,\S_c) \qq{s.t.} 
        \begin{dcases}
            \partial_t \vS_t = \tau_{\HH}(\vS_t; \fr_{\veps},h_t), \quad & \forall t \in (0,T), \\
            h_t = \mc{O}(\vS_t) ,\quad & \forall t \in (0,T),\\ 
            \vS_0  = \mfk{P} ,
        \end{dcases}
    \end{equation}
    where $\mc{O}$ satisfies the uniform positive definiteness condition \eqref{eq:ass-uniform-boundedness}.  
\end{definition}

\begin{theorem}[entropic sigma flow: convergence]\label{thm:entropic-sigma-flow-convergence}
    Assume that $\vS \in C^\infty (M_\infty , \S_c)$ is a solution of \eqref{eq:hsigmaflow} existing for all time and let $m^2,\veps > 0$ be fixed but arbitrary. Let $t>0$ be fixed and $\vt$ denote the $\theta$ coordinate expression of $\vS_{t}$, and let $\vt^i = \sum_{n\geq 0} a^i_n\phi_n$ be the decomposition of $\vt$ into eigenfunctions of the Laplacian $\Delta_{h_t}$, analogous to \eqref{eq:vti-Laplacian-decomposition}. 
    Define the set of low frequencies as
    \begin{equation}\label{eq:def-aleph}
        \aleph = \qty{n \in \N_0 \colon c_2\lambda_n + \varepsilon(\lambda_n + m^2) > 0}
    \end{equation} 
    with $c_{2}$ from \eqref{eq:bounds-mcB} and assume
    \begin{equation}\label{eq:cond-aleph}
        a^i_n = 0 \qq{for all} n \notin \aleph, \quad i \in [\cc]. 
    \end{equation} 
    If condition \eqref{eq:cond-aleph} holds for all $t>0$, then the $L^{2}(M,\S_c)$-norm of $\vS_t$ is unbounded as a function of time.
\end{theorem}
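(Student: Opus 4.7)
The approach is to adapt the Lyapunov computation of Proposition~\ref{prop:lyapunov} to the entropic case, and then use the spectral condition~\eqref{eq:cond-aleph} to flip the sign of $\dv{t}\Phi$ from non-positive to strictly positive, forcing blow-up of $\Phi$ along the flow and consequently of $\|\vS_t\|_{L^2(M,\S_c)}$.

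First, I would replay the derivation \eqref{eq:dt-Phi-proof}--\eqref{eq:dt-Phi-proof-B} verbatim, exploiting that $\tau_{\HH}$ differs from $\tau$ only by the linear term $m^{2}\vt^{i}$ and that $\pd_{i}\HH = \fr_{ij}\vt^{j}$ by \eqref{eq:partial-vphi}. Using the symmetry identity $(\fr_{\veps})_{ij}(\Gamma_{\veps})^{i}_{kl} = \Gamma_{jkl}$ from Proposition~\ref{prop:Gamma-Eps} together with integration by parts against $\Delta_{h_{t}}$, the same algebra that produced \eqref{eq:dt-Phi-proof-B} should yield
\begin{equation*}
\dv{t}\Phi(\vS_{t}) = -\int_{M}\bigl(\mc{B}(\vt)+\veps\II\bigr)_{ij}\la\dd\vt^{i},\dd\vt^{j}\ra_{h_{t}} + m^{2}\int_{M}(\fr_{\veps})_{ij}\vt^{i}\vt^{j},
\end{equation*}
i.e.\ the Lyapunov decrease of Proposition~\ref{prop:lyapunov} plus a positive entropic contribution.

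Next, apply $\mc{B}(\vt)\preceq c_{2}\II$ from Lemma~\ref{lem:b} and $\fr_{\veps}\succeq\veps\II$ via the trace inequality already used in Proposition~\ref{prop:lyapunov}, replacing both pointwise quadratic forms by $\delta_{ij}$-contractions. Expanding $\vt^{i} = \sum_{n\geq 0} a^{i}_{n}\phi_{n}$ in the Laplace-Beltrami eigenbasis (Lemma~\ref{lem:LB-spectrum}) and using the spectral identity \eqref{eq:dt-Phi-proof-laplacian-adjoint} transforms the gradient integral into $-\sum_{n}\|a_{n}\|^{2}\lambda_{n}$, and the entropic term produces $m^{2}\veps\sum_{n}\|a_{n}\|^{2}$. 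The resulting Fourier-side inequality is
\begin{equation*}
\dv{t}\Phi(\vS_{t}) \;\geq\; \sum_{n\geq 0} \|a_{n}\|^{2}\bigl[c_{2}\lambda_{n} + \veps(\lambda_{n}+m^{2})\bigr].
\end{equation*}
The defining inequality of $\aleph$ in \eqref{eq:def-aleph} makes every bracket strictly positive for $n\in\aleph$, and condition~\eqref{eq:cond-aleph} eliminates all other contributions. In particular $\lambda_{0}=0\in\aleph$ with bracket $m^{2}\veps$, so $\dv{t}\Phi(\vS_{t})>0$ whenever $\vt\not\equiv 0$.

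Finally, I would translate this strict monotonicity into unboundedness. Since $\HH$ is uniformly bounded on $\S_{c}$, the functional $\Phi$ differs from $\tfrac{\veps}{2}\|\vt\|_{L^{2}}^{2}$ by at most a constant depending only on $\mathrm{vol}(M)$ and $c$, so unboundedness of $\Phi(\vS_{t})$ is equivalent to that of $\|\vS_{t}\|_{L^{2}(M,\S_{c})}$. Under \eqref{eq:cond-aleph}, $\vt(t)$ lies in the finite-dimensional eigenspace $\mathrm{span}\{\phi_{n}:n\in\aleph(t)\}$, which is finite by Weyl's law and the uniform ellipticity \eqref{eq:ass-uniform-boundedness}; hence the minimum bracket $\delta_{\ast}(t):=\min_{n\in\aleph(t)}\bigl[c_{2}\lambda_{n}+\veps(\lambda_{n}+m^{2})\bigr]$ is strictly positive, giving $\dv{t}\Phi\geq \delta_{\ast}(t)\|\vt\|_{L^{2}}^{2}$; together with the comparison $\Phi\leq \tfrac{\veps}{2}\|\vt\|_{L^{2}}^{2}$ a Gronwall-type argument drives $\Phi$ to $+\infty$. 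The main obstacle I anticipate is ruling out soft convergence of $\vS_{t}$ to the barycentric fixed point $\vt\equiv 0$ (which would keep both sides bounded): this is the step where one must use that $n=0\in\aleph$ with the maximal bracket $m^{2}\veps$, yielding the unconditional lower bound $\dv{t}\Phi\geq m^{2}\veps\,V\,\|\bar{\vt}(t)\|^{2}$ on the spatial mean, combined with the fact that the $n=0$ ODE for $\bar\vt$ has linear part $m^{2}\bar\vt$ since $\int_{M}\Delta_{h_{t}}\vt=0$ on the closed manifold $M$.
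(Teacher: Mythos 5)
Your proposal matches the paper's proof essentially step for step: the same Lyapunov functional $\Phi$, the same computation yielding $\dv{t}\Phi(\vS_t) = -\int_M(\mc{B}(\vt)+\veps\II)_{ij}\la \dd\vt^i,\dd\vt^j\ra_{h_t} + m^2\int_M(\fr_\veps)_{ij}\vt^i\vt^j$, the same bounds via $\mc{B}\preceq c_2\II$ (Lemma \ref{lem:b}) and $\fr_\veps\succeq\veps\II$, and the same eigenfunction expansion giving $\dv{t}\Phi \geq \sum_{n\in\aleph}\big((c_2+\veps)\lambda_n+\veps m^2\big)\norm{a_n}^2 > 0$ under condition \eqref{eq:cond-aleph}. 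Your closing Gronwall and barycentric-fixed-point discussion is, if anything, more careful than the paper, which passes from strict monotonicity of $\Phi$ together with $\HH\leq 0$ to divergence of the $L^2$-norm in a single sentence without addressing those points.
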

It is clear that the set $\aleph$ is never empty; it always contains 0. For $\veps$ fixed, $\aleph$ can be enlarged by increasing $m^2$. 
\begin{proof}
To alleviate notation, we drop the subscript $t$ in this proof for all time dependent quantities.
Consider the functional $\Phi : C^{\infty }(M,\S_c) \to \R$ as in Proposition \ref{prop:lyapunov}. The proof consists in showing that $\Phi(\vS)$ is strictly increasing in time.
Hence we compute $\dv{t} \Phi(\vS)$ in the $\theta$-coordinate system, where $\vS$ satisfies the differential equation \eqref{eq:hsigmaflow}
\begin{equation}
    \pdv{t} \vS = \tau_{\HH}(\vS;\fr,h), \quad h = \mc{O}(\vS)
\end{equation}
which evaluates in $\theta$-coordinates to
\begin{equation}
    \pdv{t} \vt^{i}
    \overset{\eqref{eq:tau-sigma-HH}}{=}
    \Delta_{h} \vt ^{i} +  \Gamma ^{i}_{jk} \la \dd  \vt ^{j}  , \dd \vt ^{k} \ra_{h} + m^2 \vt ^{i}. 
\end{equation}
Similarly to Eq.~\eqref{eq:dt-Phi-proof} one derives
\begin{equation}
    \dv{t} \Phi(\vS) = 
    \int_{M} (\pd_i \HH + \varepsilon \delta _{ij}\vt^{j})\pdv{t} \vt^{i} 
    = 
    \int_{M} (\fr_{\varepsilon })\vt^{j}\pdv{t} \vt^{i}
\end{equation}
yielding
\begin{equation}
    \dv{t} \Phi(\vS) = 
    \int_M (\fr_{\varepsilon })_{ij}\vt^{j}\big(
    \Delta_{h} \vt ^{i} +  \Gamma ^{i}_{jk} \la \dd  \vt ^{j}  , \dd \vt ^{k} \ra_{h} + m^2 \vt ^{i}\big). 
\end{equation}
The first two terms can then be treated as in the proof of Prop.~\ref{prop:lyapunov} where the equation
\begin{equation}\label{eq:Phi-to-mcB}
    \int_{M} (\fr_{\varepsilon })\vt^{j}\big(
    \Delta_{h} \vt ^{i} +  \Gamma ^{i}_{jk} \la \dd  \vt ^{j}  , \dd \vt ^{k} \ra_{h} \big)
    \overset{\eqref{eq:dt-Phi-proof-B}}{=}
    - \int_{M} (\mc{B} + \veps \II)_{ij} 
    \la \dd \vt ^{i} , \vt ^j \ra_{h}  
\end{equation} 
is shown with $\mc B$ defined as in Lemma \ref{lem:b}. The last expression can be bounded by 
\begin{equation}\label{eq:lower-bound-mcb}
    - \int_M (\mc{B} + \veps \II)_{ij} 
    \la \dd \vt ^{i} , \dd \vt ^j \ra_{h}  
    \geq
    - (c_2 + \varepsilon ) \delta_{ij}\int _M 
    \la \dd \vt ^{i} ,\dd \vt ^j \ra_{h}  
\end{equation}
since $c_2 \II$ is an upper bound to $\mc B$ by \eqref{eq:bounds-mcB} and due to a trace inequality argument similar to the one used in Eq.~\eqref{eq:proof-dtPhiqt}. 
The last expression can be further simplified when expressed in terms of the expansion coefficients \eqref{eq:vti-Laplacian-decomposition}
\begin{equation}\label{eq:lower-bound-dirichlet-norm}
    - \int_M
    \la \dd \vt ^{i} , \dd \vt ^j \ra_{h}  
    \overset{\eqref{eq:dt-Phi-proof-laplacian-adjoint}}{=} 
        \sum_{n \geq 0} 
    a_n^i a_n^j \lambda_n 
    \overset{\eqref{eq:cond-aleph}}{=}
    \sum_{n\in\aleph} 
    a_n^i a_n^j \lambda_n.
\end{equation} 
    Putting together the results from above yields
    \begin{subequations}\label{eq:lower-bound-first}
        \begin{align}
            \int_{M} (\fr_{\varepsilon })\vt^{j}\big(
                \Delta_{h} \vt ^{i} +  \Gamma ^{i}_{jk} \la \dd  \vt ^{j}  , \dd \vt ^{k} \ra_{h} \big)
            & 
            \overset{\eqref{eq:Phi-to-mcB}}{=} 
            - \int_M (\mc{B} + \veps \II)_{ij} 
            \la \dd \vt ^{i} , \dd \vt ^j \ra_{h} \\
            & \overset{\eqref{eq:lower-bound-mcb}}{\geq}
            - (c_2 + \varepsilon ) \delta_{ij}\int _M 
            \la \dd \vt ^{i} ,\dd \vt ^j \ra_{h} \\
            & \overset{\eqref{eq:lower-bound-dirichlet-norm}}{=}
            (c_2 + \varepsilon ) \delta_{ij}
            \sum_{n\in\aleph} 
            a_n^i a_n^j \lambda_n.
        \end{align}
    \end{subequations}
A for the second term, we invoke the positive definiteness of $\fr \succeq 0$ and obtain a lower bound
\begin{equation}
    m^2\int_M (\fr_{\varepsilon })_{ij}\vt^{j}
        \vt ^{i}
    = 
    m^2\int_M (\fr + \veps \II)_{ij}\vt^{j}
    \vt ^{i}
    \geq
    m^{2}\varepsilon \delta_{ij}  \int_M \vt^{j}
    \vt ^{i}. 
\end{equation}
Furthermore, substituting the expansion coefficients gives
\begin{equation}
    \int_M \vt^{j}
    \vt ^{i}
    =
    \sum_{n\geq 0} a_n^{i } a_n^j 
    \overset{\eqref{eq:cond-aleph}}{=}
    \sum_{n\in \aleph} a_n^{i } a_n^j 
\end{equation}
and hence the lower bound
\begin{equation}\label{eq:lower-bound-second}
    m^2\int_M (\fr_{\varepsilon })_{ij}\vt^{j}
    \vt ^{i} \geq \varepsilon m^2  
    \delta _{ij}
    \sum_{n\in \aleph} a_n^{i } a_n^j
    = \veps m^{2} \sum_{n \in \aleph}\|a_{n}\|^{2}.
\end{equation}
Combining the bounds \eqref{eq:lower-bound-first}, \eqref{eq:lower-bound-second}, we thus obtained
\begin{subequations}
\begin{align}
    \dv{t} \Phi (\vS)
    &=
    \int (\fr_{\varepsilon })\vt^{j}\big(
    \Delta_{h} \vt ^{i} +  \Gamma ^{i}_{jk} \la \dd  \vt ^{j}  , \dd \vt ^{k} \ra_{h} \big)
    +
    m^2\int_M (\fr_{\varepsilon })_{ij}\vt^{j}
    \vt ^{i} 
    \\
    &\geq 
    \sum_{n\in \aleph} 
    ((c_{2} + \varepsilon )\lambda_n + \varepsilon m^2) \norm{a_n}^2 > 0,
    \end{align}
\end{subequations}
because for all $n\in \aleph$ by \eqref{eq:def-aleph}, we have $(c_{2} + \varepsilon )\lambda_n + \varepsilon m^2 > 0$. 
This shows that $\Phi$ is monotonously increasing in time. Since $\HH$ is a bounded from above by $0$, however, we conclude that the $L^{2}$-norm of coordinate functions $\vt$ diverges as a function of time.    
\end{proof}

\begin{remark}[relevance for convergence in practice]
The interpretation of Theorem \ref{thm:entropic-sigma-flow-convergence} is that the entropic sigma flow converges to the simplex boundary $\partial \S_c$ (in the $L^{2}$ sense) if only low frequency modes are present in the state $\vS$.  
This can be expected to hold due to the diffusion part of the tension field \eqref{eq:tau-sigma-HH}. Numerical experiments substantiate this result in Section \ref{sec:Experiments}. 
\end{remark}

\subsection{Comparison to the Continuum Limit of the S Flow}
We compare the entropic harmonic energy functional \eqref{eq:entropic-harmony-energy} and the S flow functional on the continuous domain, as described by equation \eqref{eq:cont-domain-s-flow}. In order to establish a common ground between the two models, we set $M = X \subset \R^{2}$ an open domain which is a Riemannian manifold with the Euclidean metric $h=\delta$ induced by $\R^{2}$. The S flow functional $E_S$ considers $\vP : X \to \S_c\subset\R^{c}$ as a mapping to $\R^{c}$ and evaluates to   
\begin{equation}
    E_S(\vP) = \frac{1}{2}\int_X (\norm{\dd \vP}^{2} - \norm{\vP}^{2})\dd[2]{x}. 
\end{equation}
The harmonic energy is given in $\theta$-coordinates as
\begin{equation}
    E_{\HH}(\vP) = \int_X \qty (\frac{1}{2} \fr_{ij} \la \dd \vt ^{i}, \dd \vt^{j} \ra_{\delta}  - \HH(\vP)) \dd[2]{x}. 
\end{equation}
This last expression can be simplified by employing the sphere map defined next.
\begin{definition}[{sphere map \cite{Kass:1989aa,amariMethodsInformationGeometry2000}}]
    The sphere map 
    \begin{equation}
        \label{eq:sphere-map}
        \Lambda : \S_c \to 2S^{\cc}; \quad 
        p \mapsto 2 \sqrt{p}
    \end{equation}
    is a smooth isometric immersion that identifies the simplex $\S_c$ with the positive orthant of the sphere $2S^{\cc}$ of radius 2, equipped with the round metric \cite{astromImageLabelingAssignment2017,ay2017information}. 
\end{definition}

\begin{proposition}[spherical representation]
    Let $\vP : X \to \S_c$ be a smooth map with $\theta$-coordinate functions $\vt^{i}$ and let $\mfk s := \Lambda \circ \vP : X \to S^{\cc}$ the mapping of $\vP$ to the 2-sphere. Then one has
    \begin{equation}
        \label{eq:def-vs}
        \int_X \fr_{ij}\la \dd \vt^{i} , \dd \vt^{j} \ra_{\delta} \dd[2]{x}
        = 
        \int_X \norm{ \dd \mfk s}^{2} \dd[2]{x}
    \end{equation}
\end{proposition}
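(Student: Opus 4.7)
The result is essentially immediate from the statement in \eqref{eq:sphere-map} that $\Lambda:(\S_c,\fr)\to(2S^{\cc},g_{\mathrm{round}})$ is a smooth isometric immersion, i.e.\ $\Lambda^{\ast}g_{\mathrm{round}}=\fr$ as symmetric $2$-tensors on $\S_c$. My strategy is to translate this global statement into a pointwise equality of the two integrand densities appearing in \eqref{eq:def-vs}; the equality of integrals then follows at once by integrating over $X$.

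More concretely, by the chain rule, $\dd\mfk{s}_x=\dd\Lambda|_{\vP(x)}\circ\dd\vP_x$ as linear maps $T_xX\to T_{\mfk{s}(x)}(2S^{\cc})$, and the norm $\|\dd\mfk{s}\|^{2}$ on the right-hand side of \eqref{eq:def-vs} is the natural norm on $\Omega^{1}(X,\mfk{s}^{\ast}T(2S^{\cc}))$ built from the Euclidean metric $\delta$ on $X$ and the round metric on $2S^{\cc}$. Picking any $\delta$-orthonormal basis $(e_\mu)$ of $T_xX$, the isometry property of $\Lambda$ yields
\begin{equation}
    \|\dd\mfk{s}\|^{2}(x)=\sum_{\mu}\big\|\dd\Lambda\big(\dd\vP_x(e_\mu)\big)\big\|^{2}_{\mathrm{round}}=\sum_{\mu}\fr\big(\dd\vP_x(e_\mu),\dd\vP_x(e_\mu)\big)=\fr_{ij}\big(\vP(x)\big)\la\dd\vt^{i},\dd\vt^{j}\ra_{\delta}(x),
\end{equation}
which is exactly the pointwise equality of integrands; integrating over $X$ concludes.

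The only real obstacle, if one wishes to give a self-contained derivation rather than invoke \eqref{eq:sphere-map} as a black box, is to verify $\Lambda^{\ast}g_{\mathrm{round}}=\fr$ by direct computation. Writing $\mfk{s}^{i}=2\sqrt{p_i}$ for $i\in[\cc]_0$, one has $\partial_\mu\mfk{s}^{i}=(\partial_\mu p_i)/\sqrt{p_i}$, so $\|\dd\mfk{s}\|^{2}_{\delta}=\sum_{i\in[\cc]_0}\delta^{\mu\nu}(\partial_\mu p_i)(\partial_\nu p_i)/p_i$. I would then use the constraint $p_0=1-\sum_{j\in[\cc]}p_j$ (hence $\partial_\mu p_0=-\sum_{j\in[\cc]}\partial_\mu p_j$) together with $\partial_\mu p_i=\fr_{ij}\partial_\mu\vt^{j}$ from \eqref{eq:proof-chain-rule-vp} and $\fr_{ij}=p_i\delta_{ij}-p_ip_j$ to reduce both sides to $\fr_{ij}\partial_\mu\vt^{i}\partial_\nu\vt^{j}\delta^{\mu\nu}$. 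The careful bookkeeping between the $[\cc]$-indexing of the $\theta$-coordinates and the $[\cc]_0$-indexing of the ambient sphere coordinates is the one point requiring attention, but it presents no conceptual difficulty and does not affect the main argument.
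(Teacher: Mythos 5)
Your proof is correct and follows essentially the same route as the paper: both invoke the isometric-immersion property $\Lambda^{\ast}g_{\mathrm{round}}=\fr$ of the sphere map together with the chain rule $\dd\mfk{s}=\dd\Lambda\circ\dd\vP$ to obtain the pointwise equality of the two integrand densities, and then integrate over $X$. Your optional self-contained verification of the isometry (reducing $\sum_{i\in[\cc]_0}\delta^{\mu\nu}\,\partial_\mu \vp_i\,\partial_\nu \vp_i/\vp_i$ via $\partial_\mu \vp_0=-\sum_{j\in[\cc]}\partial_\mu \vp_j$ and $\partial_\mu\vp_i=\fr_{ij}\,\partial_\mu\vt^{j}$) also checks out, whereas the paper takes this fact from the cited literature and only records the intermediate pointwise identity $\delta_{ij}w^{i}w^{j}/p_i=\fr_{ij}\mathbf{w}^{i}\mathbf{w}^{j}$.
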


\begin{proof}
    The sphere metric is induced by the Euclidean metric of the ambient $\R^{c}$. Accordingly, the Riemannian norm of a vector $v \in \R^{c}$ tangent to $2 S^{\cc}$ is given by $\delta_{ij} v^{i}v^{j}$. 
    For a vector $w \in \R^{c}$ tangent to $\S_c$, with coordinate representation $\mathbf{w}\in \R^{\cc}$, its Fisher-Rao norm at a point $p \in \S_c$ is given by 
    \begin{equation}
        \frac{\delta_{ij} w^{i} w^{j}}{p_i} = \fr(p)_{ij} \mathbf{w}^{i} \mathbf{w}^{j}.  
    \end{equation}
    The isometry relation yields
    \begin{equation}\label{eq:proof-wi-vi}
        \fr(p)_{ij} \mathbf{w}^{i} \mathbf{w}^{j}
        = 
        \delta_{ij} v^{i}v^{j}  \qq{for}
        v^{i} = \pdv{\Lambda^{i}}{p ^{j}} w^{j}.  
    \end{equation}
    Specifically, if $\mathbf{w} = \partial_\mu \vt$, where $\vt$ is the coordinate vector associated to $\vp$, then $v = \partial_\mu \mfk s$ by the chain rule. Hence
    \begin{subequations}
    \begin{align}
        \int_X \fr_{ij}\la \dd \vt^{i} , \dd \vt^{j} \ra_{\delta} \dd[2]{x}
        &=
        \int_X \fr_{ij}\partial_{\mu}\vt^{i} \partial_{\nu} \vt^{j} \delta^{\mu \nu} \dd[2]{x} 
        = \int_X \fr_{ij}\mbf{w}^{i}  \mbf{w}^{j} \delta^{\mu \nu} \dd[2]{x}
        \\
        &\overset{\eqref{eq:proof-wi-vi}}{=} \int_X \delta_{ij} v^{i} v^{j} \delta^{\mu \nu} \dd[2]{x}
        = \int_X \delta_{ij}  \partial_\mu \mfk s^{i} \partial_\nu \mfk{s}^{j} \delta^{\mu \nu} \dd[2]{x} \\
        &=
        \int_X \norm{\dd \mfk s}^{2} \dd[2]{x}.
    \end{align}
    \end{subequations}
\end{proof}

\begin{remark}[\textbf{generalized S flows}]
    All the steps above still work if we replace $(X,\delta)$ with another Riemannian manifold $(M,h)$. This paves the way for extensions of the S flow to more general base manifolds.  
\end{remark}

\begin{corollary}[entropic harmonic energy on the sphere]
    Let $\vP :M \to \S_c$ be a smooth map and $\mfk s = \Lambda \circ \vp : M \to S^{\cc}$ the associated sphere-valued map. Then the entropic harmonic energy takes the form
    \begin{equation}
        E_{\HH}(\vP) 
        = 
        \int_X \qty (\frac{1}{2} \fr_{ij} \la \dd \vt ^{i}, \dd \vt^{j} \ra_{\delta}  - \HH(\vP)) \dd[2]{x} 
        =
        \int_X \qty (\frac{1}{2} \norm{\dd \mfk s}^{2} - \HH(\vP)) \dd[2]{x}.
    \end{equation}
\end{corollary}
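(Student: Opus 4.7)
The plan is to prove the corollary as a direct two-step repackaging of results already available in the excerpt. First, I would unpack the definition \eqref{eq:entropic-harmony-energy} of the entropic harmonic energy,
\begin{equation}
E_{\HH}(\vP) = \int_X \Big(\tfrac{1}{2}\|\dd\vP\|^{2} - \HH(\vP)\Big)\dd[2]{x},
\end{equation}
and then specialise the coordinate formula of Proposition \ref{prop:EH-simplex} to the domain $(X,\delta)$: this immediately gives
\begin{equation}
\tfrac{1}{2}\|\dd\vP\|^{2}(x) = \tfrac{1}{2}\fr_{ij}\la \dd\vt^{i},\dd\vt^{j}\ra_{\delta}(x),
\end{equation}
which yields the first equality claimed in the corollary.

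For the second equality I would invoke the preceding Proposition (spherical representation). That result asserts the \emph{global} identity $\int_X \fr_{ij}\la \dd\vt^{i},\dd\vt^{j}\ra_{\delta}\dd[2]{x} = \int_X \|\dd\mfk{s}\|^{2}\dd[2]{x}$, but its proof actually establishes the pointwise equality of integrands, since $\Lambda\colon (\S_c,\fr)\to (2S^{\cc},\mrm{round})$ is an isometric immersion and the chain rule gives $\partial_{\mu}\mfk{s} = d\Lambda(\partial_{\mu}\vP)$ for every $\mu$. Hence pointwise
\begin{equation}
\fr_{ij}\partial_{\mu}\vt^{i}\partial_{\nu}\vt^{j}\delta^{\mu\nu} = \delta_{ij}\partial_{\mu}\mfk{s}^{i}\partial_{\nu}\mfk{s}^{j}\delta^{\mu\nu} = \|\dd\mfk{s}\|^{2}.
\end{equation}
Substituting this back into the previous display produces the second equality.

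The only substantive point is to verify that the isometry on target tangent spaces commutes with the contraction against the domain metric — but this is exactly the content of the proposition, since the isometry acts independently of the domain geometry. The main (and essentially only) obstacle is therefore book-keeping: making sure that the pointwise isometry argument from the proof of the spherical representation proposition is stated in a way that extracts integrand equality rather than only the integrated identity, so that combining it with the entropic term $-\HH(\vP)$ (which is untouched by $\Lambda$) yields the stated form. No further analytic work is required; the corollary is a clean rewriting, which also explains why the generalisation noted in the Remark preceding the statement (replacing $(X,\delta)$ by an arbitrary $(M,h)$) is automatic.
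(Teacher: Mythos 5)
Your proof is correct and takes essentially the same route as the paper, which states the corollary without a separate proof precisely because it follows by unpacking Definition \ref{def:entropic-harmony-energy} (with $m^{2}=1$, as used throughout that comparison subsection), specialising Proposition \ref{prop:EH-simplex} to $(X,\delta)$, and invoking the spherical representation identity \eqref{eq:def-vs}. One small simplification: your concern about upgrading \eqref{eq:def-vs} from an integrated identity to a pointwise equality of integrands is unnecessary, since the entropic term $-\HH(\vP)$ is identical on both sides and linearity of the integral lets you split off $\tfrac{1}{2}\int_X \fr_{ij}\la \dd\vt^{i},\dd\vt^{j}\ra_{\delta}\dd[2]{x}$, substitute via the proposition as stated, and recombine.
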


\vspace{0.1cm}
We are now in a position to compare the energy $E_{\HH}$ with the functional \eqref{eq:cont-domain-s-flow}
\begin{equation}
    E_S(\vP) = \frac{1}{2}\int_X (\norm{\dd \vP}^{2} - \norm{\vP}^{2})\dd[2]{x} 
\end{equation} 
governing the S flow (recall Section \ref{sec:s-flows}).  
The two functionals formulate similar goals in that, by minimization, both enforce smoothness of the function $\vp$ while also penalizing configurations that are close to the barycenter in $\S_c$ which is the point $\theta = 0$. The S flow energy pushes to the boundary by maximizing the purity $\norm{p}^{2}$, while the sigma flow energy maximize $\HH$ which means to minimize the entropy $H$. While the S flow energy enforces smoothness by reducing the $L^{2}$-norm of the gradient of $\vP$, the sigma flow reduces the magnitude of the gradient of the corresponding sphere-valued map $\mfk{s}$. 

Thus, both processes induced by minimizing the respective functionals achieve the same goal, namely to generate smooth mappings to the boundary of the simplex, yet in a slightly different manner.

\subsection{Tangent Space Parametrization}
In this section, we generalize the tangent space parametrization of assignment flows from \cite{zeilmannGeometricNumericalIntegration2020} to entropic sigma flows \eqref{eq:hsigmaflow} and an arbitrary $\alpha$-connection, i.e.~for the entropic extension of sigma-$\alpha$ flows \eqref{eq:asgmflw} (Def.~\ref{def:sigma-alpha-flow}). Such parametrizations are essential for numerical computation. We first introduce a convenient representation of the flow extending the tension field \eqref{eq:tension-coordinates} and some further notation.
\begin{definition}[entropic sigma-$\alpha$ flow]\label{def:entropic-sigma-alpha-flow}
    Let $\vP\in C^{\infty}(M,\S_c)$ and $\alpha \in \R,m^{2}\in \R_>$ be given. The entropic sigma-$\alpha$ flow in $\theta$-coordinates is the system
    \begin{equation}
        \label{eq:ah-sigmaflow}
        \tag{$\sigma$-$\HH$-$\alpha$ flow}
        \vS \in C^{\infty }(M_{T},\S_c) \qq{s.t.} 
         \quad 
        \begin{dcases}
            \partial_t \vt^{i} &= 
            \Delta_{h_t} \vt^{i} 
        + \frac{1-\alpha}{2}\Big(  
            \la \dd (\vt^{i} - \Psi) , \dd (\vt ^{ i } - \Psi) \ra_{h_t} 
            \\ &\hspace{3cm}
            -
            \la \dd  \Psi , \dd \Psi \ra_{h_t} 
        \Big)
        + m^{2} \vt^{i}
            \\
            h_t &= \mc{O}(\vS_t) \\ 
            \vS_0  &= \vP,
        \end{dcases}
    \end{equation}
    $\forall t \in (0,T),\,i \in [\cc]$, 
    where $\mc{O}$ satisfies the uniform positive definiteness condition \eqref{eq:ass-uniform-boundedness} and $\vt^{i}$ are the coordinate functions of $\vS_t$ and 
\begin{equation}
    \Psi := \psi \circ \vS_t : M \to \R. 
\end{equation}
\end{definition}

For any vector in $\theta \in \R^{\cc}$, we denote by $(0,\theta) \in \R^{c}$ the vector with 0 prepended as first component. Furthermore, the natural logarithm $\log$ as well as the exponential function apply componentwise to vectors, that is for a vectors $p \in \R^{c}_>$ and $v\in\R^{c}$, one has $(\log p)_{i}=\log p_{i}$ and $(e^{v})_{i}=e^{v_{i}}$, for every $i\in[c]$. We set $\textbf{1}_c:=(1,\dotsc,1)^{\T}\in\R^{c}$. 
The tangent space to $\S_c$ is the linear subspace
\begin{equation}\label{eq:def-T0}
    T_0 \coloneqq \Big\{ v \in \R^{c} \colon \sum_{i \in [c]} v_i = 0 \Big\}
\end{equation} 
with linear orthogonal projection 
\begin{equation}\label{eq:pi-0}
    \Pi_0 : \R^{c} \to T_0, \qquad v \mapsto v - \Big(\frac{1}{c} \sum_{i \in [c]} v_i \Big) \textbf{1}_c.
\end{equation}
The mapping 
\begin{equation}
    \label{eq:softmax}
    \sm : T_0 \to \S_c, \qquad v \mapsto \mathrm{softmax}(v) := \frac{e^{v}}{\la  \textbf{1}_c, e^{v}\ra}
\end{equation}
defines a smooth diffeomorphism between $T_0$ and $\S_c$ with inverse given by
\begin{equation}\label{eq:sm-1}
    \sm^{-1}: \S_c \to T_0, \qquad p \mapsto \Pi_0 \log p.
\end{equation}
Thus, we can uniquely parametrize elements $p$ of $\S_c$ by tangent vectors $v$ through the relation $p = \sm(v)$. We consider furthermore the \textit{replicator mapping} (as special case of the replicator tensor \eqref{eq:replicator-tensor})
\begin{equation}
    R : \S_c \to \R^{c \times c}, \qquad 
    p \mapsto R_p = [p_i(\delta_{ij} - p_j)]_{i,j \in [\cc]_{0}}. 
\end{equation}
It has the properties 
\begin{equation}\label{eq:replicator-props}
    R_p \Pi_0 = R_p, \quad \forall  p \in \S_c 
    \qquad\text{and}\qquad
    \dd\sm (v) = R_{\sm(v)}. 
\end{equation}
The maps $R$ and $\Pi_0$ can be extended to a maps $R : C^{\infty}(M,\S_c) \to C^{\infty}(M,\R^{c \times c})$ and $\Pi_0 : C^{\infty}(M,\R^{c}) \to C^{\infty}(M,T_0)$ by post-composition.

We specify the relations between the tangent space parametrization and $\theta$-coordinates.
\begin{proposition}[tangent coordinates and $\theta$-coordinates]\label{prop:tangent-space-parametrization}
For $p \in \S_c$ with coordinate representation (recall \eqref{eq:def-kappa-chart})
\begin{equation}
\theta = (\theta^1, ...,\theta^\cc) = \chi_{e}(p)
\end{equation}
and tangent space parameters 
\begin{equation}\label{eq:v-by-sm-1}
v = (v^0,\dots,v^{\cc}) = \sm ^{-1}(p) \in \R^{c}, 
\end{equation}
one has the relations
\begin{equation}\label{eq:v-to-theta}
    v^i = \theta^{i} + v^{0},\quad i\in[\cc] \qquad\text{where}\qquad
    v^{0} = - \frac{1}{c} \sum_{j \in [\cc]} \theta^{j}. 
\end{equation}
These relations are summarized by the equations
\begin{subequations}\label{eq:tangent-theta}
\begin{align}
v &= \Pi_0 (0,\theta), \\
(0,\theta) &= v - v^{0} \mathbf{1}_c. 
\end{align}
\end{subequations}
\end{proposition}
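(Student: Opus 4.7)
The proof is essentially a direct computation unwinding the definitions, so I would handle it in two short movements rather than as a long calculation.

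\textbf{Strategy.} The key observation is that by \eqref{eq:sm-1} one has $v = \sm^{-1}(p) = \Pi_0 \log p$, so every component $v^i = \log p_i - \frac{1}{c}\sum_{j\in[\cc]_0}\log p_j$ involves the same additive constant. This constant drops out of \emph{differences} of components, which is exactly the structure of the $\theta$-coordinates since $\theta^i = \log p_i - \log p_0$ by \eqref{eq:def-theta-p}. Hence I would first derive the identity $v^i - v^0 = \theta^i$ for $i\in[\cc]$ directly from these definitions, which immediately gives $v^i = \theta^i + v^0$.

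\textbf{Fixing $v^0$.} To pin down $v^0$, I would use the defining property $v\in T_0$, i.e.\ $\sum_{i\in[\cc]_0} v^i = 0$ from \eqref{eq:def-T0}. Summing the relation $v^i = \theta^i + v^0$ over $i\in[\cc]$ and adding the $i=0$ term $v^0$ yields $0 = cv^0 + \sum_{j\in[\cc]}\theta^j$, whence $v^0 = -\frac{1}{c}\sum_{j\in[\cc]}\theta^j$. This establishes \eqref{eq:v-to-theta}.

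\textbf{Summary identities.} Once \eqref{eq:v-to-theta} is in hand, the two equations in \eqref{eq:tangent-theta} are just rearrangements. For $(0,\theta) = v - v^0\mathbf{1}_c$, the $i=0$ entry reads $0 = v^0 - v^0$ and the entries $i\in[\cc]$ reduce to $\theta^i = v^i - v^0$, both of which hold by the previous step. For $v = \Pi_0(0,\theta)$, I would just apply the definition \eqref{eq:pi-0} of $\Pi_0$ to $(0,\theta)$ and observe that the subtracted mean equals $\frac{1}{c}\sum_{j\in[\cc]}\theta^j = -v^0$, so $\Pi_0(0,\theta) = (0,\theta) + v^0\mathbf{1}_c = v$ componentwise.

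\textbf{Expected obstacles.} There are none of substance; the only thing to watch is the bookkeeping between the indexing sets $[\cc]$ (for $\theta$) and $[\cc]_0$ (for $v$ and $p$), and to keep $\log p_0$ and $\sum_{j\in[\cc]}\log p_j$ clearly separated when writing out $\Pi_0 \log p$. Apart from this, the proof is purely definitional.
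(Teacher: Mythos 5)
Your proposal is correct and follows essentially the same route as the paper's proof: a direct definitional computation of $v = \Pi_0 \log p$ combined with the constraint $v \in T_0$ to pin down $v^{0}$ (the paper writes $v^{i} = \log\frac{p_i}{p_0} - \frac{1}{c}\sum_{j\in[\cc]_0}\log\frac{p_j}{p_0}$ in one chain, whereas you extract the difference $v^{i}-v^{0}=\theta^{i}$ first, a trivial reordering of the same cancellation). Your explicit verification of the summary identities \eqref{eq:tangent-theta} is a small addition the paper leaves implicit, and it is carried out correctly.
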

\begin{proof}
    Equations \eqref{eq:v-by-sm-1}, \eqref{eq:sm-1} and \eqref{eq:pi-0} yield 
    \begin{equation}
        v^{i} = \log p_i - \frac{1}{c} \sum_{j\in[\cc]_{0}} \log p_j
        = \log\frac{p_{i}}{p_{0}} - \frac{1}{c} \sum_{j\in[\cc]_{0}} \log\frac{p_j}{p_{0}}
        \overset{\eqref{eq:def-kappa-chart}}{=}
        \theta^{i}-\frac{1}{c}\sum_{j\in[\cc]}\theta^{j},\qquad i\in[\cc].
    \end{equation}
    The expression for $v^{0}$ follows from $v\in T_{0}$ and \eqref{eq:def-T0}: 
\begin{equation}
    v^{0}=-\sum_{i\in[\cc]} v^{i}=\frac{\cc}{c}\sum_{j\in[\cc]}\theta^{j}-\sum_{i\in[\cc]}\theta^{i}
    = \frac{\cc-c}{c}\sum_{j\in[\cc]}\theta^{j}
    = -\frac{1}{c}\sum_{j\in[\cc]}\theta^{j}.
\end{equation}    
\end{proof}
    We extend the bilinear pairing on the cotangent space $\la \cdot, \cdot \ra_h : \Omega^{1}(M) \otimes \Omega^{1}(M) \to C^{\infty}(M)$ to a bilinear mapping on vector valued forms $\la \cdot, \cdot \ra_h : (\Omega^{1}(M) \otimes \R^{c}) \otimes (\Omega^{1}(M) \otimes \R^{c}) \to C^{\infty}(M,\R^{c})$. For a smooth vector-valued function $f : M \to \R^{c}$ with $\dd f \in \Omega^{1}(M) \otimes \R^{c}$, the pairing is locally given by
    \begin{equation}
        \label{eq:vector-metric-term}
        (\la \dd f , \dd f \ra_h)(x) = \qty(\partial_\mu f^{i}(x) \partial_\nu f^{i}(x) h^{\mu \nu}(x))_{i \in [\cc]_0} \quad x \in M.
    \end{equation}
    With this notation, we describe the tangent space parametrization of the sigma flow model as follows. 
\begin{proposition}[tangent space representation of sigma flow]\label{prop:sigma-flow-tangent-representation}
    Let $T >0$ be given and $\vq : M_T \to \S_c$ be a solution to the entropic sigma-$\alpha$ flow system \eqref{eq:ah-sigmaflow} for an initial condition $\vP$, the mass parameter $m^{2} >0$ and some fixed $\alpha \in \R$. 
    Let $\vv_{t} = \sm^{-1}(\vq)$ denote the tangent space representation of $\vS$. Then $\mfk v$ satisfies the PDE
    \begin{equation}\label{eq:def-ts-parametrization}
        \pdv{t} \mfk v_{t} = \Pi_0 \qty(\Delta_{h_t} \mfk{v}_{t} + \frac{1-\alpha}{2}\la \dd \log \sm(\vv_{t}),\dd \log \sm(\vv_{t}) \ra_{h_t} + m^{2} \vv_{t}), \quad 
        h_t = \mc{O}(\sm(\vv_{t})), 
    \end{equation}
    where $\Pi_0$ acts on the function to its right by post-composition and the second term in the parenthesis is the application of the bilinear pairing $\la\cdot,\cdot\ra_h$ \eqref{eq:vector-metric-term} to the vector valued one-form $\dd \log \sm(\vv_{t}) \in \Omega^{1}(M) \otimes \R^{c}$.
    Furthermore, the function $\vS : M_T \to \S_c \subset \R^{c}$ satisfies the PDE 
    \begin{equation}
        \pdv{t} \vS_t = R_{\vS_t} \qty(\Delta_{h_t} \log  \vS_t + \frac{1-\alpha}{2}\la \dd \log \vS_t,\dd \log \vS_{t} \ra_{h_{t}} + m^{2} \log \vq_t), \quad 
        h_t = \mc{O}(\vS_t).
    \end{equation}
    where the matrix function $R_{\vS_t}$ acts on the function to its right by pointwise matrix-vector multiplication and the second term in the parenthesis is the application of the bilinear pairing $\la\cdot,\cdot\ra_{h_t}$ \eqref{eq:vector-metric-term} to the vector valued one-form $\dd \log \vq_t \in \Omega^{1}(M) \otimes \R^{c}$.
\end{proposition}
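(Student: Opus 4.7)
The plan is to lift the scalar equations in $\vt^{i}$, $i\in[\cc]$, to a single vector equation in $\Theta := (0,\vt)\in C^{\infty}(M,\R^{c})$, then substitute $\Theta = \vv_{t} - v^{0}\mathbf{1}_{c}$ via \eqref{eq:tangent-theta} and apply the projection $\Pi_{0}$. A first consistency check is that \eqref{eq:ah-sigmaflow} extends trivially to the $i=0$ component: since $\Theta^{0}\equiv 0$, the left-hand side vanishes, and the right-hand side vanishes too because $\la \dd(-\Psi),\dd(-\Psi)\ra_{h_{t}} - \la \dd\Psi,\dd\Psi\ra_{h_{t}} = 0$ and $\Delta_{h_{t}} 0 = 0$. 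The lifted system reads
\[
\partial_{t}\Theta = \Delta_{h_{t}}\Theta + \tfrac{1-\alpha}{2}\bigl(\la \dd(\Theta-\Psi\mathbf{1}_{c}),\dd(\Theta-\Psi\mathbf{1}_{c})\ra_{h_{t}} - \la \dd\Psi,\dd\Psi\ra_{h_{t}}\,\mathbf{1}_{c}\bigr) + m^{2}\Theta,
\]
with the vector-valued pairing from \eqref{eq:vector-metric-term}.

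The key algebraic identity to establish next is $\Theta - \Psi\mathbf{1}_{c} = \log\vS_{t}$ componentwise. For $i\in[\cc]$ this follows from $\psi = -\log p_{0}$ and $\vt^{i}=\log(p_{i}/p_{0})$ via \eqref{eq:def-psi-partition} and \eqref{eq:def-theta-p}, giving $\vt^{i}-\Psi = \log p_{i}$; for $i=0$, $0-\Psi = \log p_{0}$. Consequently $\dd(\Theta-\Psi\mathbf{1}_{c}) = \dd\log\vS_{t} = \dd\log\sm(\vv_{t})$. Substituting $\Theta = \vv_{t} - v^{0}\mathbf{1}_{c}$ on both sides, the left-hand side becomes $\partial_{t}\vv_{t} - (\partial_{t}v^{0})\mathbf{1}_{c}$, and by linearity of $\Delta_{h_{t}}$ and of the mass term the right-hand side takes the form $\Delta_{h_{t}}\vv_{t} + \tfrac{1-\alpha}{2}\la \dd\log\sm(\vv_{t}),\dd\log\sm(\vv_{t})\ra_{h_{t}} + m^{2}\vv_{t} + C\,\mathbf{1}_{c}$, where $C$ is a scalar function on $M$ collecting the residual $\mathbf{1}_{c}$-multiples ($-\Delta_{h_{t}}v^{0}$, $-\tfrac{1-\alpha}{2}\la\dd\Psi,\dd\Psi\ra_{h_{t}}$, $-m^{2}v^{0}$). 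Applying $\Pi_{0}$ to both sides annihilates every such multiple, and since $\vv_{t}\in T_{0}$ pointwise we have $\Pi_{0}\partial_{t}\vv_{t} = \partial_{t}\vv_{t}$. This yields exactly \eqref{eq:def-ts-parametrization}.

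For the PDE governing $\vS_{t}$ directly, the chain rule together with \eqref{eq:replicator-props} gives $\partial_{t}\vS_{t} = \dd\sm(\vv_{t})\,\partial_{t}\vv_{t} = R_{\vS_{t}}\,\partial_{t}\vv_{t}$. Inserting the formula derived above and using $R_{\vS_{t}}\Pi_{0}=R_{\vS_{t}}$ removes the projection. A direct computation from $R_{p}=\Diag(p)-pp^{\T}$ gives $R_{p}\mathbf{1}_{c} = p - p = 0$; since $\vv_{t}-\log\vS_{t} = \log\la\mathbf{1}_{c},e^{\vv_{t}}\ra\,\mathbf{1}_{c}$ is a pointwise scalar multiple of $\mathbf{1}_{c}$, $R_{\vS_{t}}$ does not distinguish between $\vv_{t}$ and $\log\vS_{t}$, and the same persists after applying $\Delta_{h_{t}}$ (because $\Delta_{h_{t}}$ commutes with addition of a scalar multiple of $\mathbf{1}_{c}$, which $R_{\vS_{t}}$ then annihilates). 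Replacing $\vv_{t}$ by $\log\vS_{t}$ in the Laplacian and mass terms produces the claimed representation.

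The argument is essentially careful bookkeeping once the identity $\Theta - \Psi\mathbf{1}_{c} = \log\vS_{t}$ is recognized. The only subtlety is to track which contributions lie in $T_{0}$ and which are scalar multiples of $\mathbf{1}_{c}$, since those are precisely the pieces annihilated by $\Pi_{0}$ and by the replicator $R_{\vS_{t}}$; no analytic difficulty arises under the assumed smoothness of $\vS_{t}$.
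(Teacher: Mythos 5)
Your proof is correct and follows essentially the same route as the paper's: both rest on the relations $\vt^{i}-\Psi=\log\vp_{i}$, $-\Psi=\log\vp_{0}$, $\vt^{i}=\vv^{i}-\vv^{0}$ from \eqref{eq:tangent-theta}, the annihilation of $\mathbf{1}_{c}$-multiples by $\Pi_{0}$ (resp.\ by $R_{\vS_{t}}$ via $R_{\vS_{t}}\Pi_{0}=R_{\vS_{t}}$ and $R_{p}\mathbf{1}_{c}=0$), and the chain rule $\partial_{t}\vS_{t}=R_{\vS_{t}}\partial_{t}\vv_{t}$ from \eqref{eq:replicator-props}. Your vectorized lift to $\Theta=(0,\vt)$ with the explicit check of the trivial $i=0$ component, and your careful justification for replacing $\vv_{t}$ by $\log\vS_{t}$ in the Laplacian and mass terms, are only presentational refinements (the latter is in fact spelled out more cleanly than in the paper's terse final step).
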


\begin{proof}
    In the following, we simplify notation and do not indicate the time dependencies $t\mapsto \vv_{t},t\mapsto h_{t}$ and  $t\mapsto \vt_{t}$. From relation \eqref{eq:tangent-theta} follows
    \begin{equation}\label{eq:dt-vv}
        \pdv{t} \vv
        = \Pi_0 \pdv{t} (0,\vt).
    \end{equation}
    Then, due to \eqref{eq:ah-sigmaflow}
    \begin{equation}
        \pdv{t} \vt^{i} = \Delta_{h} \vt^{i} 
        + \frac{1-\alpha}{2}\qty( 
            \la \dd (\vt^{i} - \Psi) , \dd (\vt ^{ i } - \Psi) \ra_{h} 
            -
            \la \dd  \Psi , \dd \Psi \ra_{h} 
        )
        + m^{2} \vt^{i}
    \end{equation}
    We denote the ambient coordinates of $\vS$ by $(\vp_0,\dots,\vp_\cc)$. 
    By virtue of \eqref{eq:v-to-theta}, \eqref{eq:def-theta-p} and \eqref{eq:def-psi-partition} we have
    \begin{equation}
        \vS = (\vp_0,\dots,\vp_\cc), \quad
        (\vt^{i} - \Psi) = \log \vp_{i} ,\quad 
        - \Psi = \log \vp_0, \quad 
        \vt ^{i} = \vv ^{i} - \vv^{0}.
    \end{equation}
    This yields
    \begin{equation}
        \dv{t}\vt^{i} = \Delta_{h} (\vv^{i} - \vv^{0})
        + \frac{1-\alpha}{2}\qty( 
            \la \dd \log \vp_{i} , \dd \log \vp_{i} \ra_{h} 
            -
            \la \dd \log \vp_{0} , \dd \log \vp_{0} \ra_{h} 
        )
        + m^{2} (\vv^{i} - \vv^{0})
    \end{equation}
    and in turn
    \begin{equation}\label{eq:dt-vt-first-comp}
        \dv{t}\vt^{i} = \Delta_{h} \vv^{i} 
        + \frac{1-\alpha}{2}
            \la \dd \log \vp_{i} , \dd \log \vp_{i} \ra_{h} 
        + m^{2} \vv^{i}
        - \Delta_{h} \vv^{0} 
        - \frac{1-\alpha}{2} \la \dd \log \vp_{0} , \dd \log \vp_{0} \ra_{h} 
        - m^{2} \vv^{0}.
    \end{equation}
    The projection $\Pi_0$ is linear, fulfills $\Pi_0 \textbf{1}_{c} = 0$ and allows to write
    \begin{align}
        \Pi_0 \pdv{t}(0, \vt)
        =
        \Pi_0 \qty(\pdv{t}(0,\vt) 
        + 
        \big[\Delta_{h} \vv^{0} 
        + \frac{1-\alpha}{2} \la \dd \log \vp_{0} , \dd \log \vp_{0} \ra_{h} 
        + m^{2} \vv^{0} \big] \textbf{1}_{c} ), 
    \end{align}
    which by \eqref{eq:dt-vv} and \eqref{eq:dt-vt-first-comp} amounts to
    \begin{align}
        \pdv{t}\vv 
        =   
        \Pi_0 \qty(\Delta_{h} \mfk v + \frac{1-\alpha}{2}\la \dd \log \sm(\vv),\dd \log \sm(\vv) \ra_{h} + m^{2} \vv).
    \end{align}
    Using \eqref{eq:replicator-props} and the defining relation $\sm(\vv) = \vq$, the PDE governing $\vq$ reads 
    \begin{equation}
        \pdv{t} \vq = R_\vq \dot{\vv} = 
        R_\vq \Pi_0 \qty(\Delta_{h} \mfk v + \frac{1-\alpha}{2}\la \dd \log \sm(\vv),\dd \log \sm(\vv) \ra_{h} + m^{2} \vv). 
    \end{equation} 
    Using $R_\vq \Pi_0 = R_\vq$ (cf. \eqref{eq:replicator-props}) and $\vq = \Pi_0 \log \vv$ finally leads to
    \begin{equation}
        \pdv{t} \vq = R_\vq \pdv{t} \vv = 
        R_\vq  \qty(\Delta_{h} \log \vq + \frac{1-\alpha}{2}\la \dd \log \vq,\dd \log \vq \ra_{h} + m^{2} \log \vq).
    \end{equation}
\end{proof}
\begin{remark}[advantage of tangent space parametrization]
    Regarding nonlinear flow integration on the probability simplex $(\S_c,\fr)$, the tangent parametrization enables better conditioned numerical computation than the exponential $\theta$-coordinates.
\end{remark}


\section{Experiments and Comparison}\label{sec:Experiments}
This section presents few proof-of-concept experimental results that illustrate the sigma flow model. 
\begin{itemize}
\item
Section \ref{sec:Implementation} describes a discretization of the approach (Definition \ref{def:entropic-sigma-alpha-flow}) based on the tangent space parametrization (Proposition \ref{prop:sigma-flow-tangent-representation}) and implementation details. 
\item
Section \ref{sec:Comparison-S-Flow} compares the resulting semi-discrete sigma flow model and the discretized S flow model from Section \ref{sec:s-flows}. 
\item
The convergence behavior of the numerically integrated sigma flow with and without mass parameter is reported in Section \ref{sec:1-synthetic-benchmark-for-convergence-behavior}, which illustrates Proposition \ref{prop:lyapunov} and Theorem \ref{thm:entropic-sigma-flow-convergence}.
\end{itemize}

Finally, we focus on the expressivity of the sigma flow model and on learning a mapping from data to the Riemannian metric $h$ of the domain manifold $M$. 
\begin{itemize}
\item
Section \ref{sec:2-universal-approximation-behavior-of-sigma-flow} demonstrates that, for any given image, a metric exists which generates the image just out of noise. This raises the question to what extent such metric-valued mappings generalize to an entire \textit{class} of images. 
\item
We demonstrate empirically in Section \ref{sec:3-learning-a-mapping-mco-from-simple-patterns-and-generalizing-to-complex-data} for computer generated labelings of \textit{unseen noisy} 2D \textit{random} polygonal regions, that this is indeed possible using a metric-valued mapping parametrized by a small neural net, even when then input is corrupted with a high level of noise. Details of the implementation are listed in Section \ref{sec:implementation-details}.
\end{itemize}
This finding sheds light on the intriguing problem of the generalization of this generative approach, which is just based on predicting a section of a $2\times 2$ positive definite tensor field, to classes of \textit{real} images.  
A corresponding thorough investigation is beyond the scope of this paper, however.

We therefore touch only briefly on this subject and conclude by illustrating how much the aforementioned map, trained on \textit{synthetic} random Voronoi partitions, \textit{fails} to generalize to \textit{unconstrained real} images. Although the random polygonal scene scenario, that was used for learning, considerably differs from real images, it turns out that real image structure can be recovered remarkably well. Enhancing the metric prediction map which parametrizes the Laplace-Beltrami operator in order to close this gap, is left for future work.

\subsection{Implementation}\label{sec:Implementation}
Numerical computations are based on the semi-discrete problem: the data manifold $M$ is discretized but the time dimension is kept continuous. We assume that $M$ is covered by a single global coordinate chart which is the case, e.g., for the torus $M=\TT^{2}$. 

The equation to be discretized is the sigma flow model with respect to $\alpha$-connections and entropic potential with weight $m^2 \geq 0$ in ambient coordinates, i.e.~the initial value PDE problem
\begin{equation}\label{eq:exp-sigma-model-continuous}
    \vq \in C^{\infty}(M_T,N), \qq{s.t.} \quad
    \begin{dcases}
        \pdv{t} \vq &= R_{\vq_t} \qty(\Delta_{h_t} \log  \vq_t + \frac{1-\alpha}{2}\la \dd \log \vq,\dd \log \vq \ra_h + m^{2} \log \vq),  \\
        h_t &= \mc{O} (\vq_t), \\ 
        \vq_0 &= \vP,
    \end{dcases}
\end{equation} 
$\forall  t \in (0,T)$, 
for a given $\vP \in C^{\infty}(M,\S_c)$ and some chosen fixed time period $[0,T],\, T>0$.  

Spatial discretization entails to replace paths $[0,T] \to C^{\infty}(M,\S_c)$ by paths taking values in the assignment manifold, denoted by
\begin{equation}
    \label{eq:def-fS}
\fq\colon [0,T] \to \mc{W}^N_c,
\end{equation}
where $N$ is the number of points used to discretize $M$. Due to our assumption that $M$ is covered by a single coordinate chart, we can discretize $M$ by a regular grid. $\log \vq$ is discretized by $t \mapsto \log \fq(t)$ where $\log$ acts on all the entries of the matrix $\fq(t)$ separately.
The metric $h \in \Gamma\big(T^{\ast}(M_T)\otimes T^{\ast}( M_T)\big)$ (recall the notation \eqref{eq:def-MT}) is represented by a matrix function 
\begin{equation}
\label{eq:def-fh}
\fh : [0,T] \to \R^{N \times 2 \times 2}.
\end{equation}
We denote the first dimension slices of the tensor $\fh \in \R^{N \times 2 \times 2}$ by 
\begin{equation}\label{eq:ha-2x2}
\fh(t)_a \in \R^{2 \times 2},\quad a \in [N]. 
\end{equation}
Discretization of the operator $\mc{O}$ yields the function 
\begin{equation}\label{eq:mcO-discrete}
\msf O: \mc{W}^N_c \to \R^{N \times 2 \times 2}
\end{equation}
to be further specified below. 
The Laplace-Beltrami operator 
\begin{equation}
    \Delta _h = \frac{1}{\sqrt{\abs{h}} } \partial_{\mu } \qty(\sqrt{\abs{h}} h^{\mu \nu } \partial _\nu )
\end{equation}
is the composition of the differential operator $\partial_{\mu }(\sqrt{\abs{h}} h^{\mu \nu } \partial _\nu )$ with the multiplication of the scalar function $1 / \sqrt{\abs{h}}$, which can be discretized individually. 
As for the differential operator,  we adopt the discretization of \cite[Sec.~3.4.2]{Weickert:1998aa} which yields a sparse matrix $\msf{E}_\fh \in \R^{N \times N}$. Discretizing the scalar function $1 / \sqrt{\abs{h}}:M \to \R$ yields a diagonal matrix $\msf{Q}_\fh \in \R^{N \times N}$ whose entries are given by
\begin{equation}
    (\msf{Q}_\fh)_{aa} = \frac{1}{\sqrt{\abs{\fh_a}}} . 
\end{equation}  
As a result, the discretized Laplace-Beltrami operator is given by the matrix
\begin{equation}
    \label{eq:def-fL}
    \fL_\fh = \msf{Q}_\fh \msf{E}_\fh \in \R^{N \times N}.
\end{equation} 
In view of the second term of the PDE \eqref{eq:exp-sigma-model-continuous}, we describe now a discretization of the operation
\begin{equation}
    \label{eq:bilin-cotan}
    \la \dd \log \vq, \dd \log \vq \ra_h = h^{\mu \nu} \partial_\mu (\log \vq) \partial _\nu (\log \vq) :M \to \R^{c}. 
\end{equation} 
The derivative operators $\partial_\mu$ are discretized by stencil operators that generate matrices $\msf{D}_\mu   \in \R^{N \times N}, \mu \in [\dim M]$. 
For instance, a $3\times 3$ stencil for estimating the partial derivative $\partial_{1}$ in $x$-direction on a 2d grid is
\begin{equation}
    \frac{1}{8}
    \begin{pmatrix}
        -1 & 0 & 1 \\ 
        -2 & 0 & 2 \\
        -1 & 0 & 1
    \end{pmatrix}
    .
\end{equation}
The discrete derivative operators $\msf{D}_\mu$ act on the matrices $\log \fq \in \R^{N \times c}$ by matrix multiplication from the left. As a result, the discretized version of \eqref{eq:bilin-cotan} is given by the matrix 
\begin{equation}
    \label{eq:def-fD}
    \la \msf{D} \log \fq, \msf{D} \log \fq \ra_\fh \in \R^{N \times c}, \quad 
    \big(\la \msf{D} \log \fq, \msf{D} \log \fq \ra_\fh\big)_a
    = 
    \fh_{a}^{\mu \nu} (\msf{D}_\mu \log \fq)_a (\msf{D}_\nu \log \fq)_a, \quad a\in [N].  
\end{equation}
We summarize these discretization rules in the following definition.
\begin{definition}[semi-discrete sigma flow]
    Let $\msf{P} \in \mc{W}^N_c,\alpha \in \R, m^{2} \in \R_>$ be given. The \textit{semi-discrete sigma flow model} is the initial value ODE problem
    \begin{equation}\label{eq:semidiscrete-sigma-model}
        \msf S : [0,T] \to \mc{W}^N_c \qq{s.t.}
        \begin{dcases}
            \dot{\msf{S}} &= R_\fq \qty(\fL_{\fh} \log \fq + \frac{1-\alpha}{2} \la \fD \log \fq , \fD \log \fq \ra_{\fh} + m^{2} \log \fq), \\
            \msf h(t) &= \msf{O} \big(\fq(t)\big), \qquad \forall  t \in [0,T], \\ 
            \msf{S}(0) &= \msf{P}.
        \end{dcases}
    \end{equation}
    The first equation is understood as equality of functions $[0,T] \to T\mc{W}^N_c$, whereas $\msf O : \mc{W}^{N}_c \to \R^{N \times 2 \times 2}$ maps to the open cone of symmetric positive-definite matrices and satisfies the uniform positive definiteness condition \eqref{eq:ass-uniform-boundedness}. 
\end{definition}
The dynamical system \eqref{eq:semidiscrete-sigma-model} evolves on the assignment manifold. 
The papers \cite{zeilmannGeometricNumericalIntegration2020} and \cite{Boll:2024aa} have established numerical methods for the geometric numerical integration of such systems that can also be employed here.

\subsection{Comparison to the Discrete S Flow Model}\label{sec:Comparison-S-Flow}
We compare the discrete S flow model from Section \ref{sec:s-flows} and the semi-discrete sigma flow model \eqref{eq:semidiscrete-sigma-model} with $m^{2}=1,\alpha=1$. This makes sense, because the S flow is intimately related to the e-geometry of the probability simplex (c.f. \cite{amariMethodsInformationGeometry2000}) which is realized by $\alpha=1$. The discrete S flow ODE is given by 
\begin{equation}
    \dot{\fq} = R_{\fq}(\fL_{\Omega} \fq + \fq),
\end{equation}   
whereas the sigma flow ODE with $\alpha =1 ,m^{2} =1$ reads
\begin{equation}
    \dot{\fq} = R_{\fq}(\fL_{\fh} \log \fq +  \log \fq),
\end{equation} 
which reveals the similarity of these two models. Both systems couple replicator equations as the original assignment flow model, yet with different fitness functions. To be specific, the S flow has its Laplacian operator parametrized by a weight matrix $\Omega$, whereas the sigma flow is parametrized by a spatially discretized Riemannian metric $\fh$. Furthermore, the fitness function of the sigma flow acts purely on the logarithms of $\fq$, whereas the S flow acts on $\fq$ directly.

\subsection{Relation with the Transformer Network Architecture}
\label{sec:transformer}

The sigma model has been introduced from a \textit{optimize-then-discretize} viewpoint: Minimizing various generalized harmonic energies (e.g., \eqref{eq:def-EH}, \eqref{eq:def-EB}, \eqref{eq:EH-simplex}, \eqref{eq:entropic-harmony-energy}) generate variants of the sigma model approach. Subsequently, after discretization (Section \ref{sec:Implementation}), the differential equations are numerically integrated to determine the sigma flow.

A viable alternative is the antipodal \textit{discretize-then-optimize} viewpoint that first focuses on a discretization of the harmonic energy functional. We only provide here a brief account of the ideas and refer to \cite{gaster2018computing,wu2020computing} and \cite{desbrun2005discrete}.
 
First, assume that the Riemannian manifold $(M,h)$ has been approximated by a \textit{simplicial complex}, which is plausible at least in the case of image data, see \cite{Gu:2002aa} for instance. This discretization may be represented in terms of a graph $G=(V,E)$ with $N=|V|$ vertices and a weight matrix $\Omega \in \R^{N \times N}$ satisfying
\begin{itemize}
    \item non-negativity: $\Omega_{ab} \geq 0,\; a,b\in V$, 
    \item symmetry: $\Omega_{ab} = \Omega_{ba},\;\forall a,b\in V$,
    \item support on edges: $\Omega_{ab} > 0 \;\gdw\; ab\in E$. 
\end{itemize}
Section \ref{sec:Implementation} provides a basic example: A regular grid is used to discretize a torus $\TT^{2}$, and a local $3\times 3$ symmetric stencil is used to discretize a differential operator of the form $u \mapsto \mathrm{div}(D \nabla u)$. This realizes an elementary simplicial complex where the graph is given by the grid and the weight matrix is encoded by the stencils weights. These weights matrix fulfills the three criteria for a weight matrix of \cite[pp.~76]{Weickert:1998aa}. 

In this set-up, maps $\mfk{P}: M \to \S_c$ are discretized by assignment matrices $\msf{P}\in \mc{W}^{N}_c$, and the harmonic energy functional is succinctly discretized to 
\begin{equation}
    \msf{E} : \mc{W}^{N}_c \to \R, \qquad \msf{P} \mapsto \sum_{a,b \in [N]^{2}} \Omega_{ab} d_{\fr}(\msf{P}_a,\msf{P}_b),
\end{equation}
where $d_\fr$ denotes the distance function on $\S_c$ induced by the Fisher-Rao metric. The variation of this discrete harmonic energy is given by the \textit{discrete tension field} \cite{gaster2018computing}
\begin{equation}
    \Upsilon(\msf{s}, \Omega) \in \R^{N \times c}, \qquad \Upsilon(\msf{s},\Omega)_a = \frac{1}{\mu_a}\sum_{b \in [N]} \Omega_{ab} \exp_{\msf{P}_a}^{-1}(\msf{P}_b) \in T_0,
\end{equation} 
where $\mu_a$ is the vertex weight induced by the simplicial complex; see \cite{desbrun2005discrete} for details. 
This expression can be simplified when we isometrically identify the simplex $\S_c$ with the intersection of the positive orthant and the sphere $S^{c-1}$ (\cite[Lemma 1]{astromImageLabelingAssignment2017}) and replace $\msf{P}$ by a matrix $\msf{s} \in \R^{N \times c}$ whose columns are unit vectors. The discrete tension field takes the explicit form
\begin{equation}
    \Upsilon(\msf{s},\Omega)_a = \frac{1}{\mu_a}\sum_{b \in [N]} \Omega_{ab} \frac{\mathrm{arccos}(\la \msf{s}_a,\msf{s}_b \ra)}{1 - \la \msf{s}_a,\msf{s}_b \ra}
    (\msf{s}_b - \msf{s}_a \la \msf{s}_a,\msf{s}_b \ra),
\end{equation}   
Using this representation, we can formulate an alternative discrete sigma flow equation with fixed weight matrix of the form
\begin{equation}
    \label{eq:mesh_sigma}
    \dot{\msf{s}}(t) = \Upsilon(\msf{s}(t),\Omega), \qq{where} \msf{s}_a(t) \in S^{c-1} \qq{and} \msf{s}_a(t) \geq 0, \quad \forall a \in [N],\; t > 0. 
\end{equation}
To generalize, we can make the weight matrix $\Omega$ dependent on the state $\msf{s}(t)$ and the time $t$, i.e.~we consider time-variant weight matrices. As a result, we obtain a dynamical system evolving on a product of unit spheres, whose solution can be approximated by the geometric Euler scheme (assuming no constraints are violated) 
\begin{equation}
    \msf{s}(t_{k+1}) = \Pi\Big(\msf{s}(t_k) + \delta \Upsilon\big(\msf{s}(t_k),\Omega(\msf{s}(t_k) , t_k)\big) \Big),
\end{equation}  
where $\Pi: \R^{N\times c}\to (S^{c-1})^{N},\; x \mapsto x / \norm{x}$ is the standard projection and $\delta$ is the step size for numerical integration. We analyze this update step in the extreme case $\la \msf{s}_a, \msf{s}_b \ra = 0$ corresponding to the largest possible distance on the part of the sphere contained in the positive orthant. The equations then simplify to
\begin{equation}
    \msf{s}(t_{k+1}) = \Pi\Big(\msf{s}(t_k) + \frac{\delta \pi}{2\mu} \Omega (\msf{s}(t_t),t_k) 
    \msf{s}(t_k) \Big) \qq{(equality of matrices)}. 
\end{equation}
This expression shares many similarities with an \textit{attention block} of the \textit{Transformer} network architecture \cite{vaswani2017attention}. As presented in \cite[pp.~5]{geshkovski2023mathematical}, a simplified version of the attention block can be abstractly understood as the update rule
\begin{equation}
    \msf{s}(t_{k+1}) = \Pi\Big(\msf{s}(t_k) +  \mathrm{SelfAttention}\big(\msf{s}(t_k)\big) 
    \msf{s}(t_k) \Big), 
\end{equation}   
where the so-called \textit{dot product self-attention mechanism}
\begin{equation}
    \mathrm{SelfAttention}\big(\msf{s}(t_t)\big) 
    =
    \mathrm{sm}\Big(\big\la\msf{s}(t_k) , \msf{s}(t_k) \big\ra\Big)
\end{equation}
induces a valid symmetric weight matrix on a graph with $N$ vertices that is fully connected. 
    This simplified version of the attention block ignores fully connected layers usually employed in the transformer architecture. 
The comparison shows however that this version of the discrete sigma flow can be interpreted as a generalized attention block, since the formulation \eqref{eq:mesh_sigma} realizes the simplified attention block as a special case on a fully connected graph. 
Furthermore, the choice 
\begin{equation}
    \Omega (\msf{s}(t_t),t_k) = \mathrm{EdgeMask}\big(\mathrm{SelfAttention}\big(\msf{s}(t_t)\big)\big) 
\end{equation}
would be a valid weight matrix if the entries of the self-attention block not attending to connected vertices are masked out. 
    On the other hand, the formulation \eqref{eq:mesh_sigma} contains only weights on graph edges, because the discretization scheme is only supposed to achieve the first order approximation of the smooth structure. In order to approximate the smooth structures to higher precision, accounting for the contribution of points in two-hop neighborhoods etc. would be required. This produces a fully supported weight matrix in the limit of highest order approximation with a fixed number of discretization points.

We leave the investigation of mutual connections and implications for research from the continuous \textit{optimize-then-discretize} perspective, as adopted in this paper, for future work.

\subsection{Implementation Details}\label{sec:implementation-details}
\textbf{Code.}
The experiments from the Sections \ref{sec:1-synthetic-benchmark-for-convergence-behavior}, \ref{sec:2-universal-approximation-behavior-of-sigma-flow}, \ref{sec:3-learning-a-mapping-mco-from-simple-patterns-and-generalizing-to-complex-data} are documented in code at 
\begin{center}
    \url{https://github.com/IPA-HD/sigmamodels}
\end{center}
The repo provides all data needed to reproduce the figures and the training process.

\begin{wrapfigure}{r}{.5\textwidth}
\includegraphics[width=.24\textwidth]{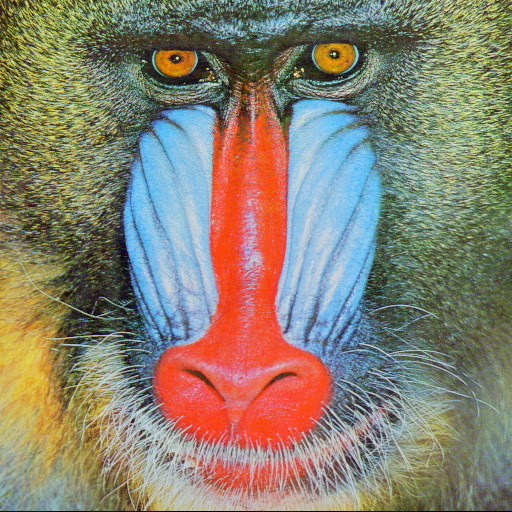}
\includegraphics[width=.24\textwidth]{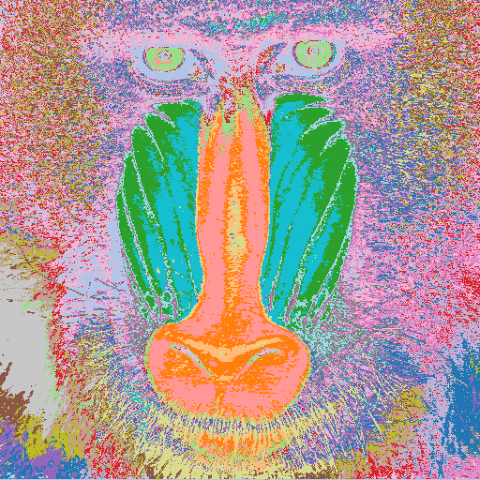}\\[2mm]
\includegraphics[width=.24\textwidth]{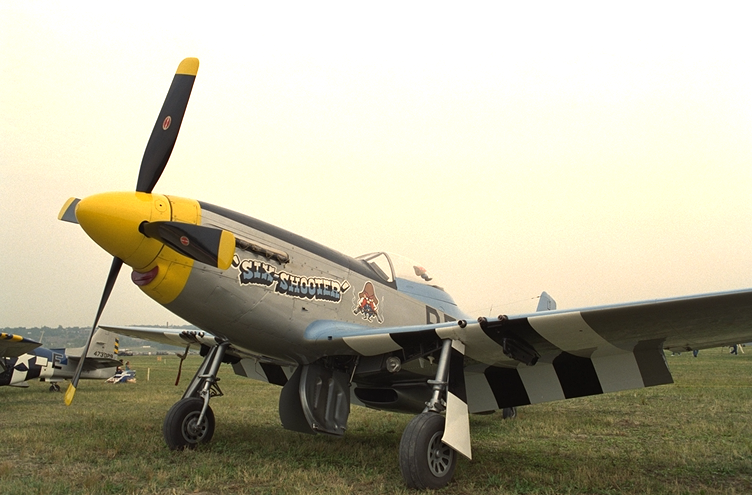}
\includegraphics[width=.24\textwidth]{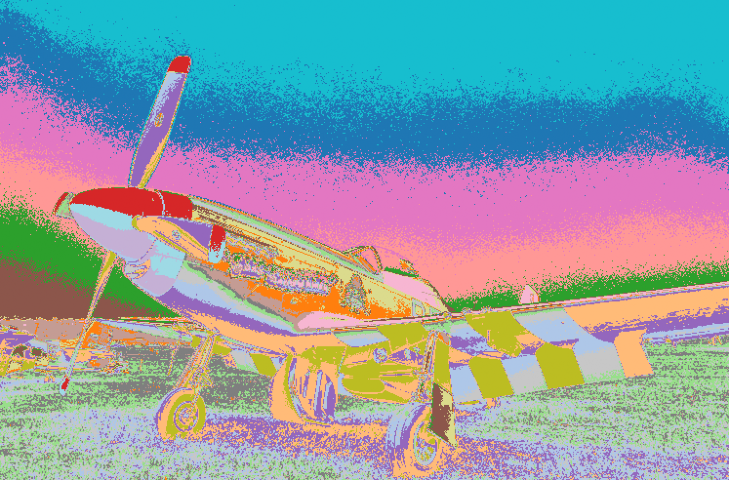}
\caption{Images of the mandrill and the airplane with corresponding labelings, used for training and evaluating the expressivity of the sigma flow method.}
\label{fig:img1}
\end{wrapfigure}

\textbf{Data.} 
We used the images displayed in Figures \ref{fig:img1} and \ref{fig:img2} to test the sigmaflow architecture for various image labeling tasks. The label configurations are displayed using a color code. The figures display the RGB data of the images we used as well as labelings of these images produced by means of $k$-means clustering of the pixel values in CIELAB color space. The image of an airplane is the sample kodim20 from the Kodak database \url{https://r0k.us/graphics/kodak/}. The image of a mandrill was taken from the USC-SIPI Image Database \url{https://sipi.usc.edu/database/database.php}. The images of the buildings taken from different viewpoints and displayed in Figure \ref{fig:img2}, were taken from the Zurich Buildings Database \url{https://vision.ee.ethz.ch/datsets.html}. The original RGB values were modified via a SLIC segmentation method, to simplify the textures in the data and suppress high spatial frequencies. 
\begin{figure}
\includegraphics[width=\textwidth]{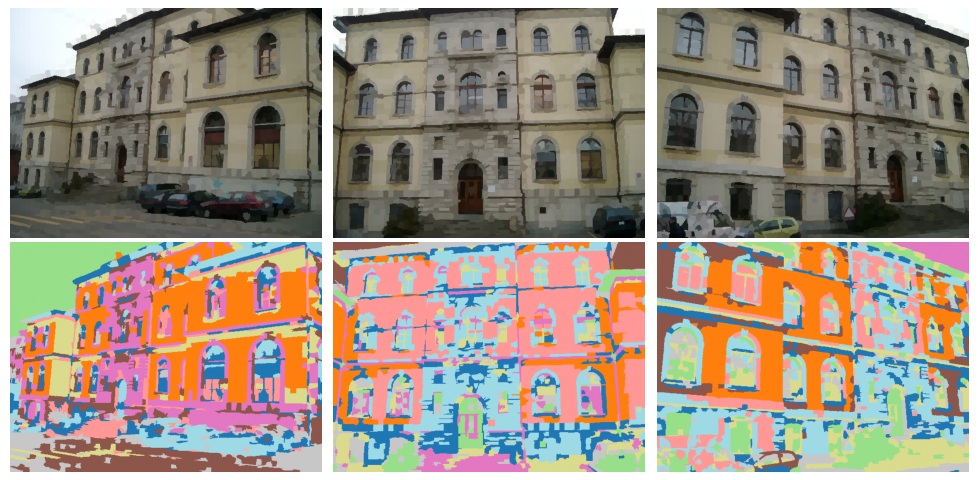}
\caption{Simplified images of buildings with corresponding labelings, used for training the sigma flow approach and an UNet in order to compare expressivity and generalization to novel viewpoints.}
\label{fig:img2}
\end{figure}

\vspace{0.25cm}
\textbf{Parametrization of the metric $\fh$.} 
We specify the parametrization of the positive definite $2 \times 2$ matrices $\fh(t)_a$ (cf.~\eqref{eq:ha-2x2}). For simplicity, we drop the subscript $a$ indicating the dependency on the grid points that are used to discretize the domain manifold $M$. We reuse below the symbol $a$ with a different meaning confined to this section. 

The three degrees of freedom of the metric tensor are given by $x,y,z \in \R$. The inverse metric tensor bounded from below by $0.1 \II_2$ has the form
\begin{equation}
    \fh^{-1} = \frac{1}{v} \mqty(
        a & b \\ 
        b & c 
    ),
\end{equation} 
where the parameters $a,b,c$ must satisfy the relation $a+c>0$ and $ac-b^2 > 0$. We parametrized $a,b,c,v$ in terms of functions $\lambda(x),\alpha(y)$ and $v(z)$ in terms of
\begin{equation}
    a = \lambda + \sin^{2}(\alpha) \Delta, 
    \qquad 
    b = \lambda \sin(\alpha)\cos(\alpha),
    \qquad
    c = \lambda + \cos^{2}(\alpha) \Delta, 
    \qquad 
    \Delta = \frac{1}{\lambda} - \lambda.
\end{equation} 
with $\lambda(x) > 0.1, \alpha(z) \in \R$ and $1/v(z) > 1$. This parametrization guarantees that the resulting matrix $\fh^{-1}$ is symmetric positive definite and its smallest eigenvalue is not smaller than 0.01. The functions $\lambda, \alpha, v$ were varied for the different experiments.

For the experiments reported in Section \ref{sec:2-universal-approximation-behavior-of-sigma-flow}, we applied this function to a parameter matrix of size $\R^{N \times 3}$, described below as part of the neural network architecture, along the last dimension to obtain a metric. For the parameter functions, we chose for the simple label configuration
\begin{equation}
    \lambda(x) = \tilde\sigma(x) + 0.5, \quad \cos^2(\alpha(y)) = \sigma(y), \quad 1/v(z) = 0.5*\sigma(z) + 0.5 
\end{equation}
where $\sigma(x) = e^x / (1+e^x)$ the sigmoid and $\tilde\sigma(x) = \log(1 + e^x)$ the softplus function. For the more complex label configuration we used
\begin{equation}
    \lambda(x) = \tilde\sigma(x) + 1, \quad \cos^2(\alpha(y)) = \sigma(y), \quad 1/v(z) = \sigma(z) + 0.1 
\end{equation}

For the experiments reported in Section \ref{sec:3-learning-a-mapping-mco-from-simple-patterns-and-generalizing-to-complex-data}, we trained a neural network mapping $\R^{N \times d}$ to $\R^{N\times 3}$. The outputs of the network were then interpreted as parameters $x,y,z$ of a metric tensor, which we then transformed with the prescription to transform them to a positive symmetric matrix via the functions
\begin{equation}
    \lambda(x) = 1 - 0.9*\tanh(|x|) , \quad \alpha(y) = \frac{\pi}{2}\tanh(y), \quad v(z) = 1 - 0.9*\tanh(|z|).
\end{equation}

\vspace{0.25cm}
\textbf{Numerics.} For assessing the convergence behavior of the sigma flow (Section \ref{sec:1-synthetic-benchmark-for-convergence-behavior}), we numerically solved the ODE problem corresponding to the semi-discrete sigma flow \eqref{eq:semidiscrete-sigma-model} using a geometric Runge-Kutta scheme of order five with adaptive step size implemented via the Python libraries Diffrax \cite{kidger2021on} and Jax \cite{jax2018github}, and with the standard 5-point stencil for discretizing the flat Laplacian $\Delta$ on $\TT^{2}$. 

For the computationally more expensive experiments reported in Sections \ref{sec:2-universal-approximation-behavior-of-sigma-flow} and \ref{sec:3-learning-a-mapping-mco-from-simple-patterns-and-generalizing-to-complex-data}, we used the simplest geometric Runge-Kutta scheme, i.e.~geometric Euler integration, to speed up the optimization. For both experiments, we used the Adabelief first-order optimizer. 

Regarding the simple label configurations considered in Section \ref{sec:2-universal-approximation-behavior-of-sigma-flow}, we optimized over 2000 steps with learning rate 0.01, using a cosine decay learning rate schedule, end time $T=3$ and step size 0.2, and mass parameter $m^2 = 1$. The metric $\fh$ was parametrized by a single $512\times 512 \times 3$ tensor. For the experiment reported in Section \ref{sec:3-learning-a-mapping-mco-from-simple-patterns-and-generalizing-to-complex-data}, we used the end time $T=2$ with step size $0.5$ and a \textit{learnable mass parameter $m^2$} initialized to 1. We trained both the UNet model and the sigma flow model with the Adabelief optimizer and the learning rate $10^{-4}$ taking 15000 training steps, with a batch size of 2.

\vspace{0.25cm}
\textbf{Noisy labelings.} 
The corrupted labelings for the experiments in Section \ref{sec:3-learning-a-mapping-mco-from-simple-patterns-and-generalizing-to-complex-data} were created by first applying label smoothing with smoothing factor 0.8, followed by applying the logarithm, addition of noise sampled from the standard normal distribution $\mc{N}(0,0.2)$, followed by a linear projection to the unit cube and applying softmax. For visualization, we rounded pixelwise to the label with highest probability and applied a color code.

\vspace{0.25cm}
\textbf{Neural networks.} 
In the experiments of Section \ref{sec:3-learning-a-mapping-mco-from-simple-patterns-and-generalizing-to-complex-data}, we employed two neural network configurations.

We trained an UNet\footnote{
    We took the implementation of the UNet from \url{https://github.com/Benjamin-Walker/diffrax-diffusion-models}
} \cite{ronneberger2015u} as a naive baseline (in the sense that we did not tune or modify this architecture) for comparison to our sigma flow architecture. We chose an UNet as a generic reference model, since it is known to work well for segmentation tasks, while being flexible enough to accept different image resolutions. The reference configuration operated at hidden dimension 16, with two downsampling, two upsampling and two middle blocks, comprising overall 270.716 parameters.

\begin{wrapfigure}[17]{r}{0.3\textwidth}
\centering
\includegraphics[width=.3\textwidth]{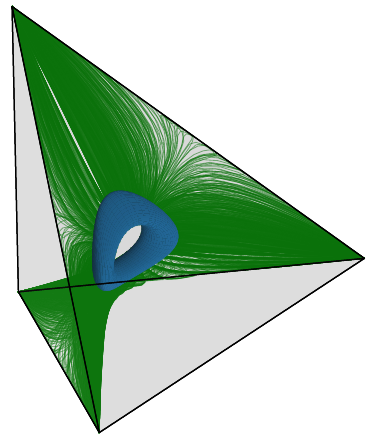}
\caption{Visualization of the torus inside $\S_4$ using the chart $\chi_{m}$ and the flow lines of points on the torus under the sigma flow time evolution.}
\label{fig:torus-a}
\end{wrapfigure}

We compared this baseline model with a parametrized version of the sigma flow architecture. The sigma flow was parameterized with a neural network containing a single convolutional layer with kernel size 15 $\times$ 15 and 64 filters padded so as to preserve the number of vertices in the discretized tori. Following the convolutional layer, we applied a multilayer perceptron with one hidden layer of dimension 64 and output dimension 3 at each pixel separately. Using the 3 dimensional output vectors, the metric $\fh^{-1}$ was constructed by applying the function $\R^{3} \to \R^{2 \times 2}$ described above. Alltogether, the sigma flow model contained 306.948 trainable parameters.

\subsection{Convergence Behavior}\label{sec:1-synthetic-benchmark-for-convergence-behavior}
We demonstrate the convergence behavior of the sigma flow model using the following set-up. The torus $M = \mathbb T^{2}$ serves as base manifold, with the standard flat metric $h = \delta$ and the simplex $\S_{4}$ as the target manifold. 
We used an initial configuration $\vP$ given by 
\begin{equation}
    \vP : \TT^{2} \to \S_{4} , \qquad 
    (\xi_{1},\xi_{2}) \mapsto 
    \mathrm{sm}
     \begin{pmatrix}
        x = 0.2(3+\cos\xi_{1} )\cos \xi_{2}  \\
        y = 0.2(3 + \cos \xi_{1}) \sin \xi_{2}  \\
        z = 0.2 \sin \xi_{1}  \\ 
        x + y + z
    \end{pmatrix},
\end{equation}
where $\mathrm{sm}$ denotes the softmax function $\R^{4} \to \S_4$ defined by \eqref{eq:softmax}. Figure \ref{fig:torus-a} shows a plot of $\chi_{m} \circ \mfk{P}$; see the blue submanifold from which the flow lines emanate. 

\begin{figure}
    \begin{subfigure}{.45\textwidth}
        \centering
        \includegraphics[width=\textwidth]{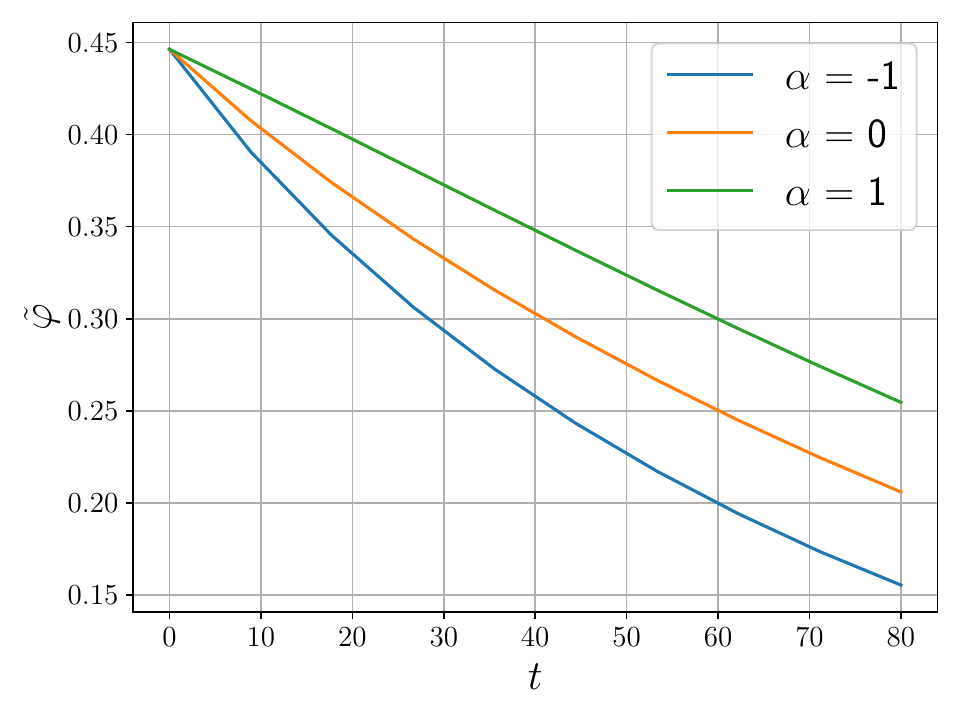}
        \caption{Speed of convergence to the center for different values of $\alpha$ and $m=0$. $\tilde{\varphi}$ is $\varphi$ normalized to the range $[0,1]$.}
        \label{fig:torus-e}
    \end{subfigure}
    \hfill
    \begin{subfigure}{.45\textwidth}
        \centering
        \includegraphics[width=\textwidth]{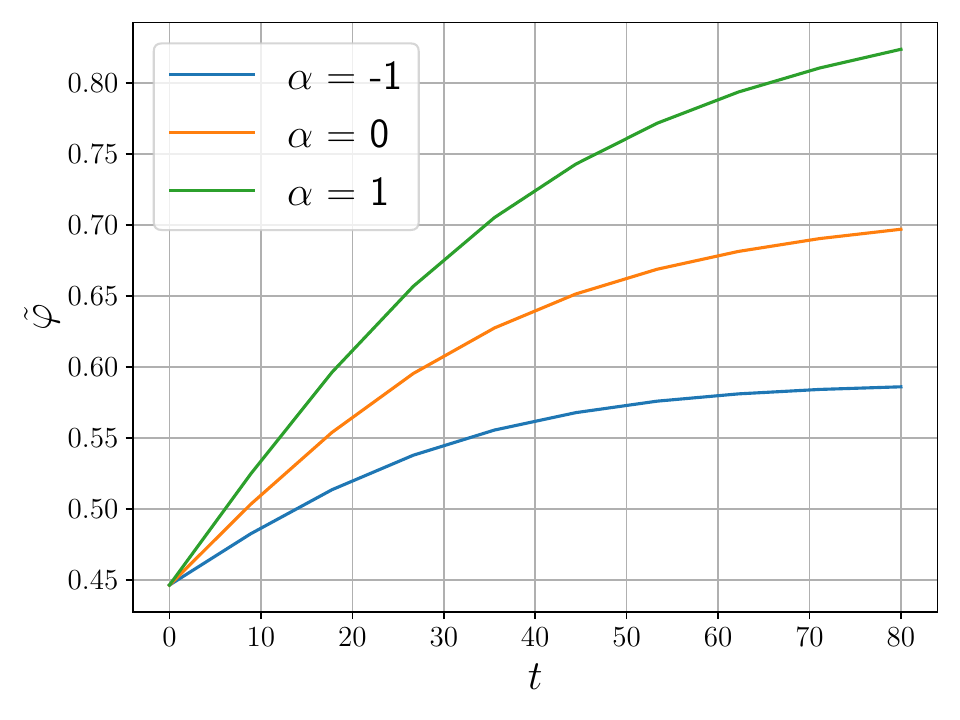}
        \caption{Speed of convergence to the boundary for different values of $\alpha$ and $m=0.1$. $\tilde{\varphi}$ is $\varphi$ normalized to the range $[0,1]$.}
        \label{fig:torus-f}
    \end{subfigure}
\caption{Illustration of the convergence behavior of the sigma flow for a low dimensional toy example.}
\label{fig:torus}
\end{figure}

We first examined with the solution to the PDE system 
\begin{equation}
\pdv{t}\mfk{S}_{t} = \tau(\mfk{S}_{t},\fr,\delta), \quad \mfk{S}_{0} = \mfk{P},
\end{equation}
corresponding to the special case $m^{2}=0$ of the sigma model, which was discretized and numerically solved as explained in Section \ref{sec:Implementation}. The torus $\TT^{2} = S^{1} \times S^{1}$ was discretized by a regular grid with periodic boundary conditions. 

$\mfk{S}_{t}$ converges to a point for large integration times which corroborates the statement of Proposition \ref{prop:lyapunov}. We point out that we never had to resort to the regularized metric \eqref{eq:def-fr-eps}, that is the flow computation converged for $\varepsilon = 0$.  
We repeated the experiment with $m^{2} = 1$. The trajectories emanating from the initial toroidal surface are shown by Figure \ref{fig:torus-a}. All integral curves of the flow lie approximately on the \textit{Wright manifold} \cite[Sec.~18.8]{hofbauer1998evolutionary}, a 2 dimensional submanifold of $\S_4$. This illustrates that solutions to the sigma flow equations are capable of preserving non-trivial relations during time evolution.
Although no metric regularization $\varepsilon$ was used, convergence to the boundary of the simplex was always observed. Varying $\alpha$ did not change the convergence behavior, but the speed of convergence as shown by the Figures \ref{fig:torus-e} and \ref{fig:torus-f}, respectively. 

\newpage
\subsection{Expressivity of Sigma Flow Model with a Constant Metric}\label{sec:2-universal-approximation-behavior-of-sigma-flow}
\begin{wrapfigure}{r}{.5\textwidth}
    \centering
    \includegraphics[width=.4\textwidth]{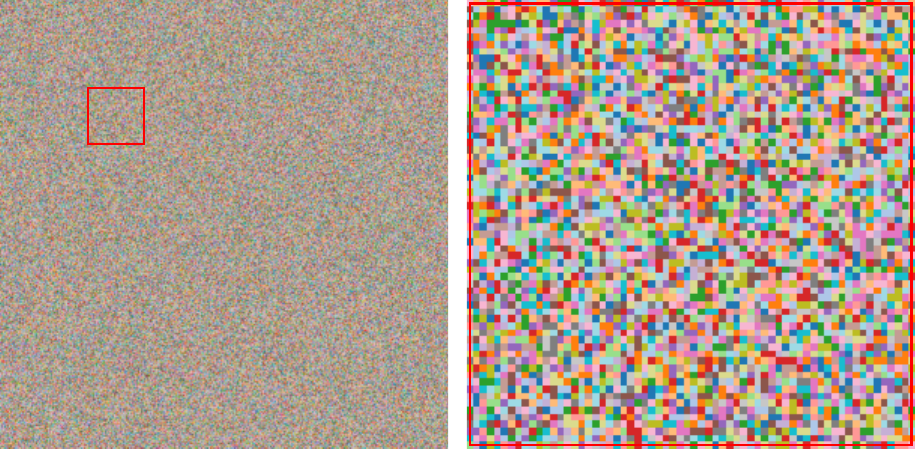}
    \caption{Initial configuration $\vP$ (left) with close-up view of the patch marked with red (right).}
    \label{fig:capacity-noise}
\end{wrapfigure}

We studied the capacity of the entropic sigma flow \eqref{eq:hsigmaflow} 
\begin{equation}
\pdv{t} \mfk{S}= \tau_{\tilde{\varphi}}(\mfk{S},\fr,h),\qquad
\mfk{S}_{0} = \mfk{P}
\end{equation}
to parametrize a labeling
\begin{equation}
    \label{eq:def-vL}
\mfk{L}: \TT^{2}\to[20]
\end{equation}
using a \textit{fixed} metric $h$ on $M=\TT^{2}$, as a basis for predicting such mappings in terms of a corresponding metric for novel given data (Section \ref{sec:3-learning-a-mapping-mco-from-simple-patterns-and-generalizing-to-complex-data}).

To this end, we solved the PDE-constrained optimization problem
\begin{equation}\label{eq:h-learning-problem}
\min_{h} \int_{\TT ^{2}}\mrm{KL}[\mfk{S}_{T}(x): \mfk{L}(x)] \, \dd^{2} x \qquad \text{s.t.} \qquad
\mfk{S} \in C^{\infty}(M_{T},\S_{c}), \quad \pdv{t} \mfk{S}_{t} = \tau_{\tilde{\varphi}}(\mfk{S}_{t},\fr,h), \quad 
\mfk{S}_{0} = \mfk{P},
\end{equation}
for a fixed initial configuration $\vP$ (see Figure~\ref{fig:capacity-noise}). The optimization was performed using gradient descent and backpropagation. We repeated the optimization for two different target configurations $\mfk{L}$ depicted by Figure~\ref{fig:targets}. The label configurations are visualized with a color code.

Figure~\ref{fig:capacity-prog} shows the 
resulting state $\vS_T$ at different stages of the optimization, 
demonstrating that the desired configuration has been gradually achieved. For the simple label configuration, we observed a nearly exact reproduction of the target pattern: only three pixels were labeled incorrectly. For the complex label configuration, we measured $4.4\%$ mislabeled pixels. This is however not visible to the naked eye, since all the incorrectly labeled pixels are located in image regions with high spatial frequency, cf.~Figure \ref{fig:bin-mask}, where one cannot identify and recognize the individual pixels.

Overall, these results validate that the sigma flow architecture is expressive enough to encode labelings comprising a broad range of spatial scales and local configurations of label transitions.

\begin{figure}[t]
    \centering
    \begin{subfigure}[t]{0.6\textwidth}
    \includegraphics[width=.49\textwidth]{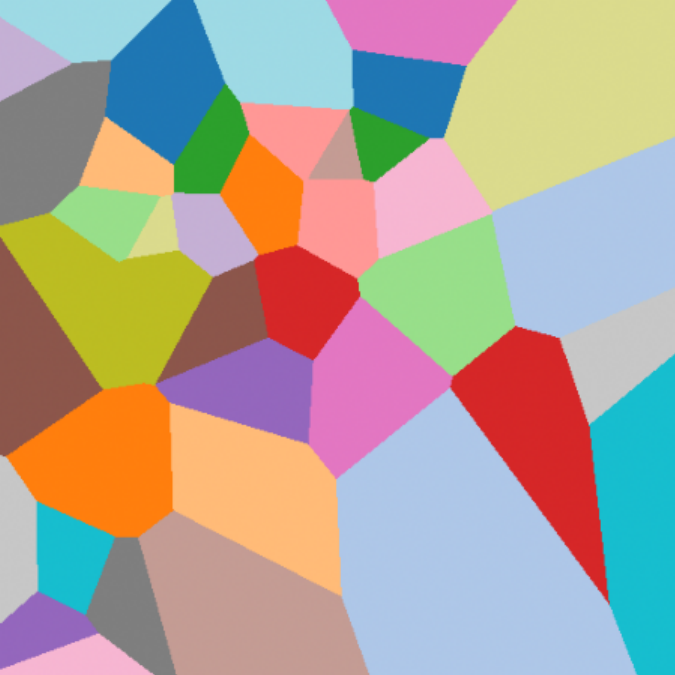}
    \hfill
    \includegraphics[width=.49\textwidth]{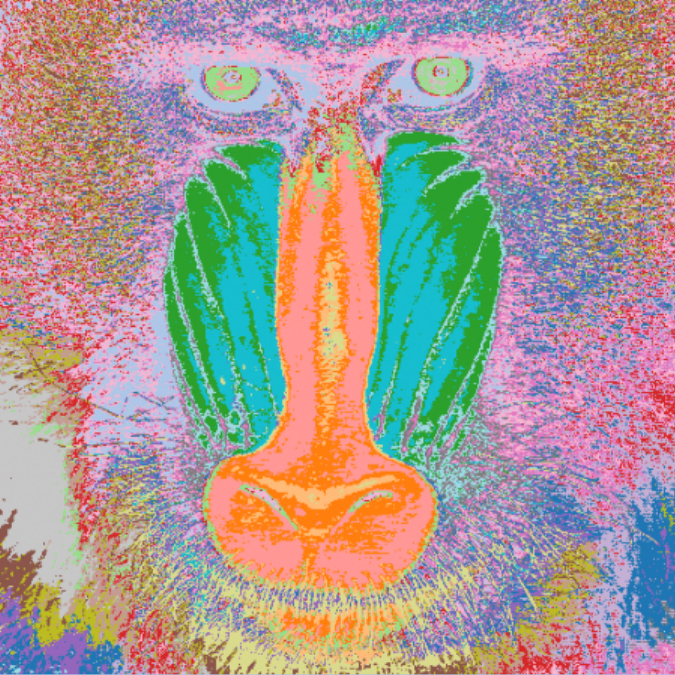}
    \caption{
        \footnotesize{Target configurations to be generated by the sigma flow architecture comprising a constant metric tensors, separately learned for each target configuration. The result for the simple label configuration, consisting of large-scale polyhedral patches, is nearly perfect; only 3 pixels are mislabeled after the optimization procedure. For the complex labeling of the mandrill image, the final labeling is slightly erroneous, with 4.4 \% mislabeled pixels located in the highly textured regions. This errors, therefore, are not visible.
        }
    }
    \label{fig:targets}
    \end{subfigure}
    \hspace{2mm}
    \begin{subfigure}[t]{0.3\textwidth}
    \centering
    \includegraphics[width=.98\textwidth]{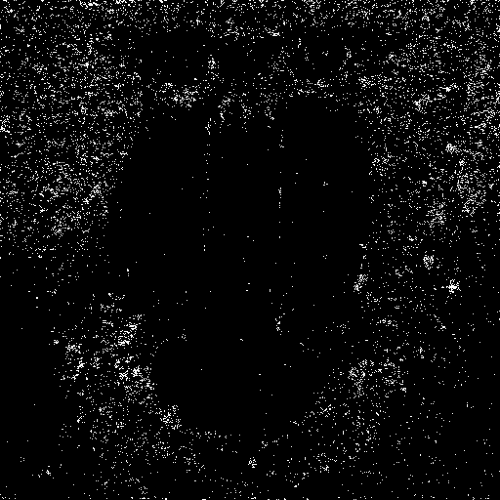}
    \caption{
        \footnotesize{Binary mask of mislabeled pixels in the mandrill example. Errors only occur in the regions of high spatial frequency, such that the final labeling is indistinguishable by the human eye.}
    }
    \label{fig:bin-mask}
    \end{subfigure}\\[2mm]
    \begin{subfigure}[t]{\textwidth}
    \includegraphics[width=\textwidth]{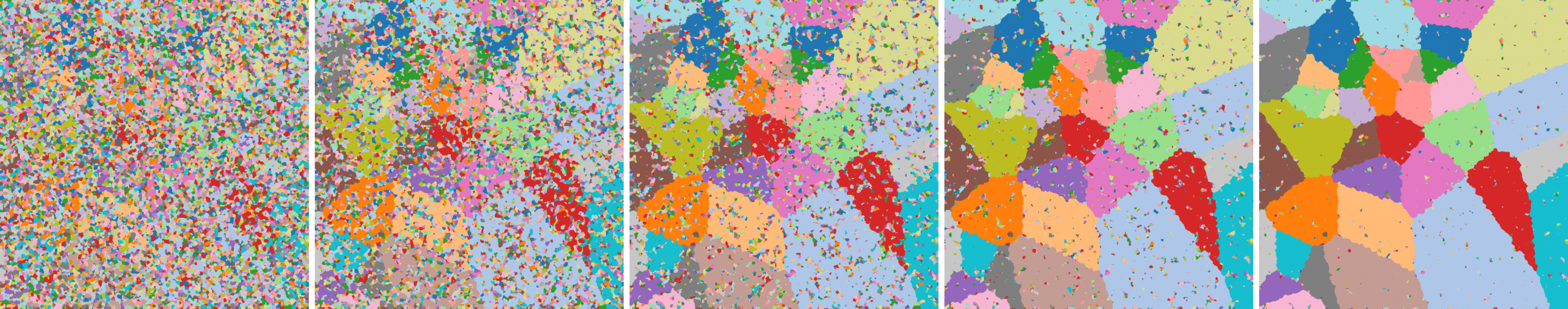}
    \caption{Evolution of $\vS_T$ after subsequent 100 gradient descent steps. This illustrates how the sigma flow approach learns to represent labeled regions at larger spatial scales and with arbitrary orientations in terms of a constant metric and geometric diffusion.
    \\[4mm]}
    \label{fig:capacity-prog}
    \end{subfigure}
    \caption{Validation of the expressivity of the sigma flow architecture: representation of labelings after learning a constant metric that comprise different spatial scales and local label transitions.}
\end{figure}

\subsection{Learning Prediction of Labelings in Terms of a Time-Variant Metrics}
\label{sec:3-learning-a-mapping-mco-from-simple-patterns-and-generalizing-to-complex-data}
The experiments reported in this section concern the \textit{learned} operator 
\begin{equation}\label{eq:mcO-to-be-learned}
    \mc{O} : C^{\infty}(\TT^{2} \times [0,T],\S_{20}) \to C^{\infty}(\TT^{2},\R^{2 \times 2})
\end{equation}
which parametrizes all variants of the sigma model in terms of the metric $h_{t} = \mc{O}(\vS_t,t)$ (Figure \ref{fig:top-level-view}). Here the dependency of the operator $\mc{O}$ on both the state $\vP : \TT^{2} \to \S_{20}$ and the time $t \in [0,T]$ was taken into account. 

We selected a set of training data 
\begin{equation}
    \label{eq:ts}
\mrm{TS} = \{ \mfk{L} : \TT^{2} \to [20] \}
\end{equation}
consisting of pointwise labeled (segmented) tori with a \text{random} polygonal spatial structure. A color-coded sample from the training set is depicted by Figure \ref{fig:trainset-patches}. We repeated the same experiment with a different training set, consisting of images patches of labeled buildings from Figure \ref{fig:img2}, to provide also a test with more complex label configurations. See Figure \ref{fig:house-train} for a sample from this second training set. Here only patches of images taken from two out of three different viewpoints were used for training. The third unseen view was used for testing.
The objective of training was to learn a \textit{prediction map} \eqref{eq:mcO-to-be-learned} for the recovery of the ground truth labeling from a severely corrupted input signal. We denote by $\mathrm{Corr}$ the distribution of corruptions applied to the data. Figure~\ref{fig:learning-res} shows an example of a corrupted sample from the training set in the leftmost panel. 

We used a small neural network to define and parametrize a class of operators 
\begin{equation}\label{eq:nn-function-class}
\mc{C} \subset C^{\infty}(\TT^{2} \times [0,T],\S_{20}) \to C^{\infty}(\TT^{2},\R^{2 \times 2}). 
\end{equation}
The learning task was formulated as the PDE-constrained optimization problem
\begin{equation}\label{eq:Corr-objective}
    \min_{\mc{O} \in \mc{C}} \sum_{\mfk{L} \in \mathrm{TS}} 
        \mathbb{E}_{\sigma \sim \mrm{Corr}}\Bigg[\int_{\TT ^{2}} \mathrm{KL}[\vS_T(x):\mfk{L}(x)] \, \dd^{2}x \Bigg]
        \;\qq{s.t.}\;
        \pdv{t} \vS_t = \tau_{\tilde{\varphi}}\big(\vS_t,\fr, \mc{O}(\vS_t,t)\big), 
        \quad \vS_0 = 
        \sigma(\mfk{L}).
\end{equation} 
The learned operator $\mc{O}$ is supposed to accomplish the following. If $\vS_0$ is a corrupted initial configuration, then $\vS_T$ should recover the uncorrupted configuration. In particular, this should hold for \textit{unseen} and \textit{independently sampled random} test data from the same image class. The optimization problem was solved through backpropagation and gradient descent. As a baseline for comparison, we trained an UNet of roughly similar model size for the same task. Results of the test phase are shown by Figure \ref{fig:learning-res} for the Voronoi labelings and by Figure \ref{fig:house-res} for the labeling of buildings. Both models achieved the restoration task to a reasonable extent.\footnote{
    Note that training in the two scenarios result in two different models. Each model can only solve one of the two cases and therefore we trained four models: one UNet for Voronoi labelings and another UNet for the labelings of building. Accordingly, we trained two sigma flow models for these two scenarios.
}

\begin{wrapfigure}{r}{.6\textwidth}
    \centering
    \begin{subfigure}{.6\textwidth}
        \includegraphics[width=\textwidth]{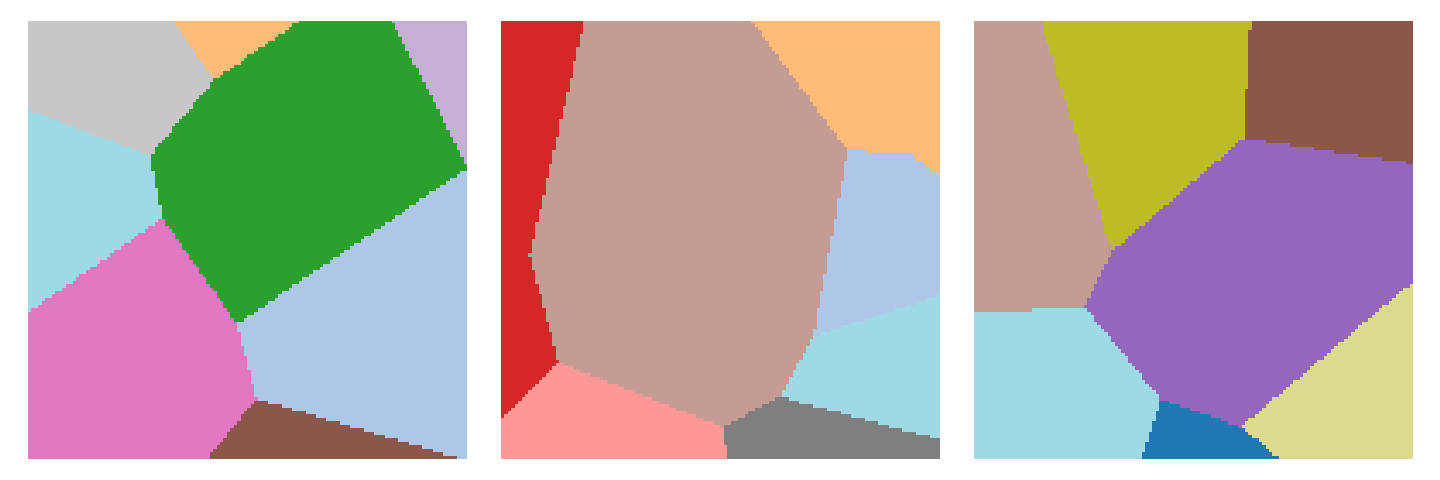}
        \caption{Samples from the first training set containing labeled images of size 128$\times$128 pixels, with randomly distributed Voronoi regions.\\[2mm]}
        \label{fig:trainset-patches}
    \end{subfigure}
    \begin{subfigure}{.6\textwidth}
        \includegraphics[width=\textwidth]{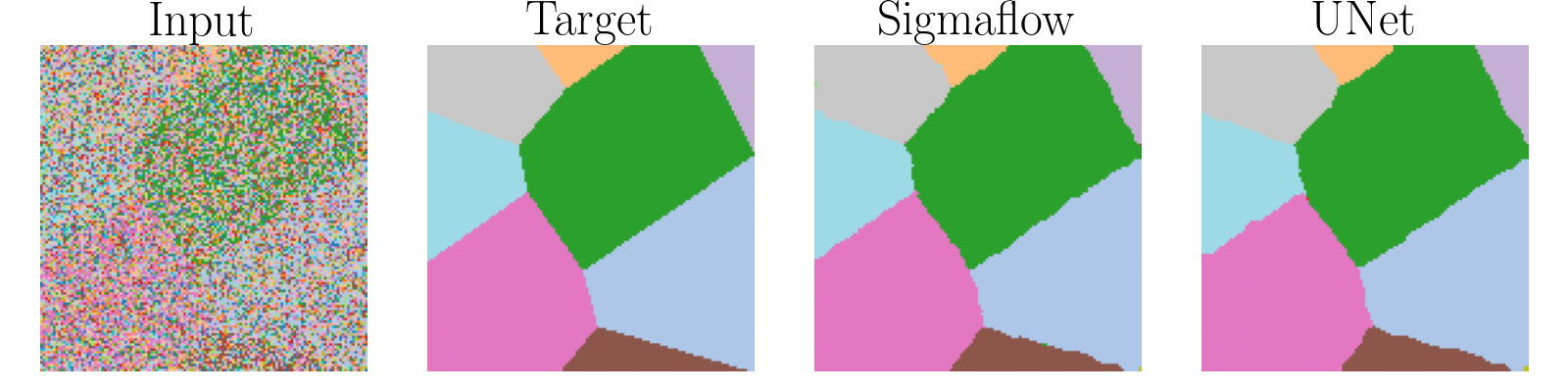}
        \caption{
            Training results for the sigma flow and the UNet model.
        }
        \label{fig:learning-res}
    \end{subfigure}
    \begin{subfigure}{.6\textwidth}
        \includegraphics[width=\textwidth]{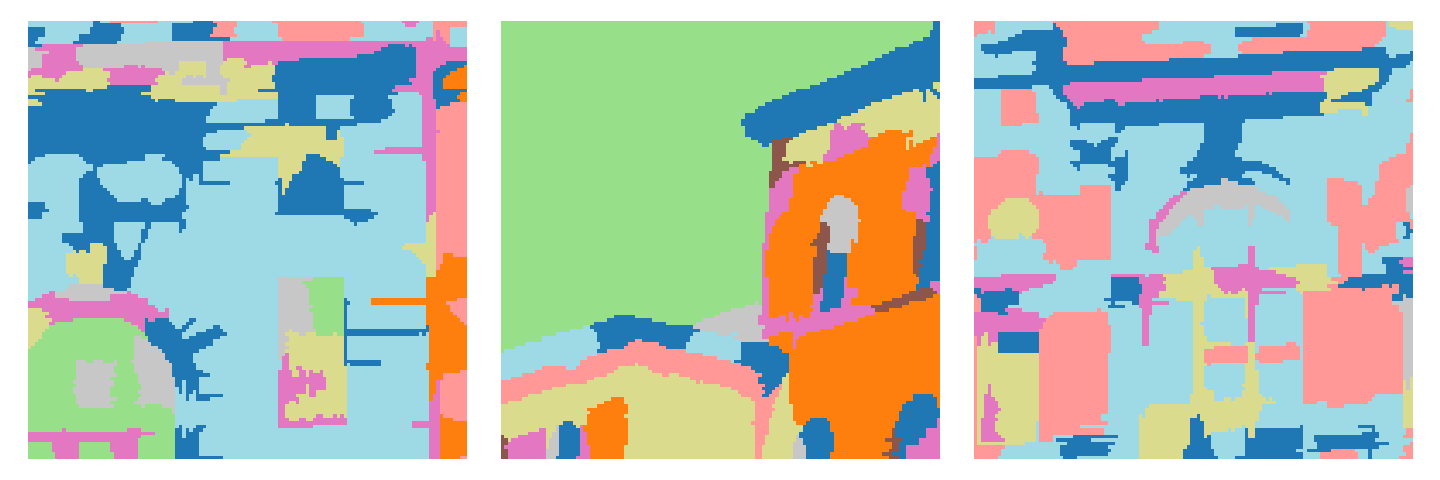}
        \caption{Samples of the training set for the images of building, each of size 128$\times$128 pixels, extracted as patch from the larger labeled images from Figure \ref{fig:img2}. Only images taken from two out of the three different 3D viewpoints were used for training. \\[2mm]}
        \label{fig:house-train}
    \end{subfigure}    
    \begin{subfigure}{.6\textwidth}
        \includegraphics[width=\textwidth]{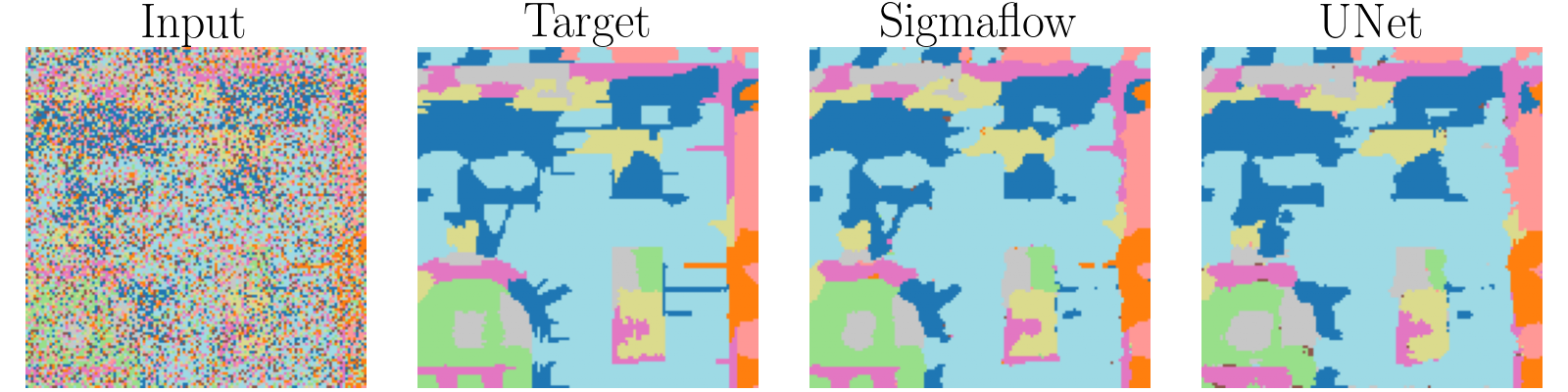}
        \caption{Training results for the sigma flow and the UNet model.}
        \label{fig:house-res}
    \end{subfigure}
    \caption{Training setup and results for sigma flow and UNet architectures.}
    \label{fig:training-data}
\end{wrapfigure}
We then tested the generalization capability of both models. As shown by Figure \ref{fig:learning-gen}, we confronted the models that were trained on labeled Voronoi-structured  \textit{patches}, with larger Voronoi-structured images of size $512\times 512$ pixels and similar spatial scales. Additionally, we applied a different normalization to the corrupted data: during training, the data were normalized to the unit cube, whereas normalization to the unit sphere was used in the test phase. 

We observed degradation of labeling results for both models (Figure \ref{fig:test-sigma-UNet}). The sigma flow produced fewer artifacts in this test case, however. Erroneous predictions of the UNet model were mainly caused by the changed normalization of the inputs. Using larger images as inputs, rather than patches for training, only slightly decreased the prediction performance of both models, but the change of normalization affected notably the UNet model. Training produced about 99\% percent correctly labeled pixels for both models, which remained at this level for both models when they were tested on the larger images. Regarding normalization, the performance of the sigma flow model stayed at 99\% after changing the normalization, whereas the performance of the UNet dropped down to 92\%.

We then tested both models on data shown by Figure \ref{fig:learning-air} comprising different spatial scales, quite dissimilar to the training data depicted by Figure \ref{fig:training-data}. We changed again the normalization from the unit cube to the unit sphere and observed the same effect on the prediction performance of both models, as reported above. When using the unseen airplane picture with unit cube normalization as input for both models, the sigma flow dropped to 79\% correctly labeled pixels and the UNet model to 76\%. After changing the normalization to the unit sphere, the sigma flow dropped to 77\% and the UNet to 68\%. 

Finally, we tested the prediction performance of both models in the second scenario with images of buildings. Figure \ref{fig:gen-house} shows how the models generalized to larger inputs and different normalization. The training rate of correct labelings for both models is about 94\% correctly labeled pixels. When confronted with the whole image instead of a smaller patch like during training, both models kept this performance level. After changing the normalization, the sigma flow model kept its performance, whereas the prediction rate of the UNet dropped to 84\%. Figure \ref{fig:gen-house-new} shows how the models performed on unseen inputs which resemble the training data, but differ due to a changed 3D viewpoint. 
The prediction rate of the sigma flow remained stable at 94\% whereas the performance of the UNet dropped to 91\%. After changing the normalization from the unit cube to the unit sphere, the performance of the UNet further dropped down to 85\%. Figure \ref{fig:gen-house-new} shows the corresponding predictions of both models.

Summing up, we conclude that in this scenario, the sigma flow architecture produced a more labeling predictor than the UNet baseline model, mainly due to high sensitivity of the UNet model to input normalization. We argue that the geometric diffusion process underlying the sigma flow architecture helps to prevent such instabilities, since the diffusion always operates in terms of relative quantities and hence provides some form of slack for changes of global scale. Furthermore, the diffusion based modelling paradigm of the sigma flow equips the architecture with a robust prior for dissipative noise removal, which can be leveraged in problems such as the one presented in these experiments.

\begin{figure}
    \begin{subfigure}{\textwidth}
        \centering
        \includegraphics[width=\textwidth]{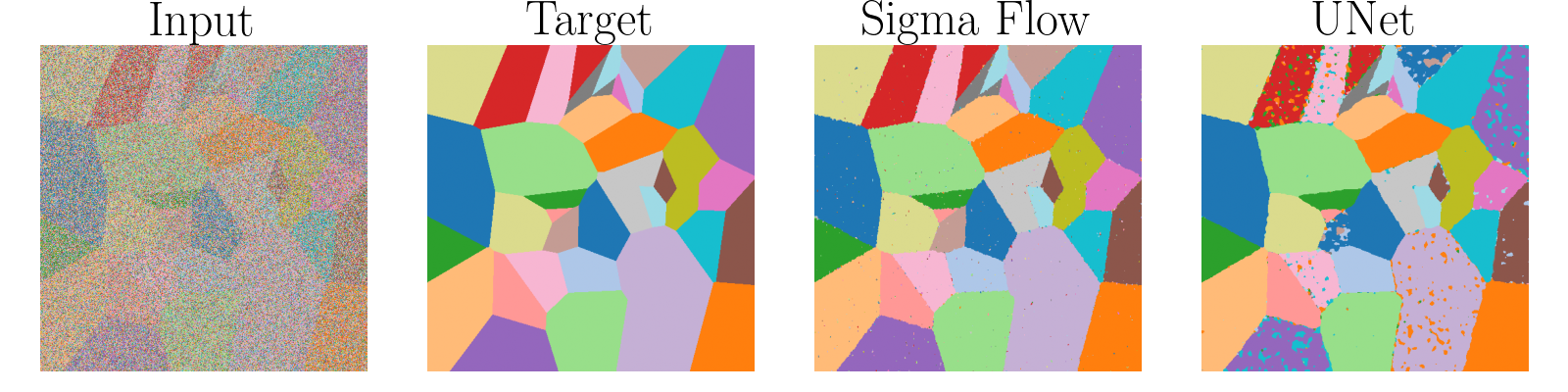}
        \caption{
            Results of the sigma flow model and the UNet for unseen test data of larger size 512$\times$512, with the same level of noise but a different input normalization in comparison to the training phase (see text). \textit{Left to Right:} Perturbed input data, target labeling (ground truth), result of the sigma flow model, result of the UNet model. The performance of prediction degrades in comparison to training performance. The sigma flow model with a learned metric achieves a better error rate, however.
        }
        \label{fig:learning-gen}
    \end{subfigure}
    \vspace{2mm}
    \begin{subfigure}{\textwidth}
        \centering
        \includegraphics[width=\textwidth]{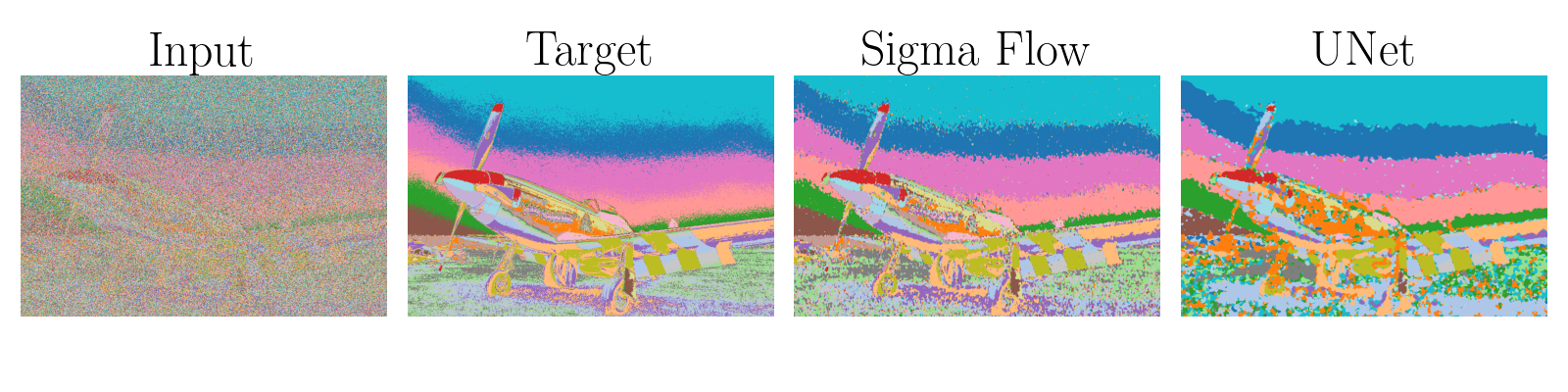}
        \caption{
            Same set-up as (a), yet now using a dissimilar unseen test image in comparison to the training data. \textit{Left to Right:} Perturbed input data, target labeling (ground truth), result of the sigma flow model, result of the UNet model. We observe the same trend as in Figure \ref{fig:learning-gen}: both models degrade, but the sigma flow model achieves a better error rate.
        }
        \label{fig:learning-air}
    \end{subfigure}
    \begin{subfigure}{\textwidth}
        \includegraphics[width=\textwidth]{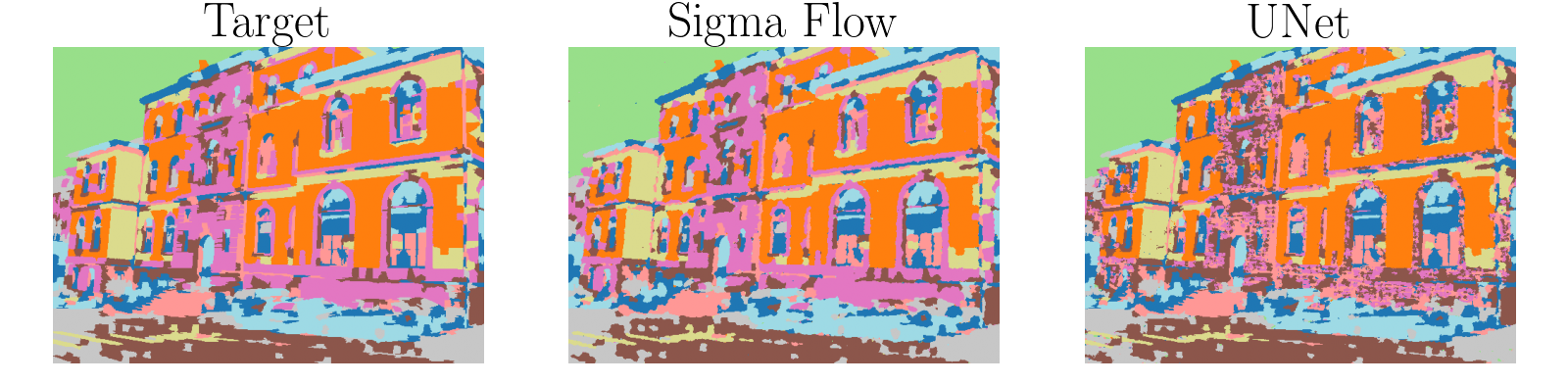}
        \caption{
            Test evaluation of the two models for the building scenario using larger input images from the training set (rather than patches in the training phase) and different input normalization (see text). \textit{Left to Right:} Perturbed input data, target labeling (ground truth), result of the sigma flow model, result of the UNet model.\\[2mm]
        }
        \label{fig:gen-house}
    \end{subfigure}
    \begin{subfigure}{\textwidth}
        \includegraphics[width=\textwidth]{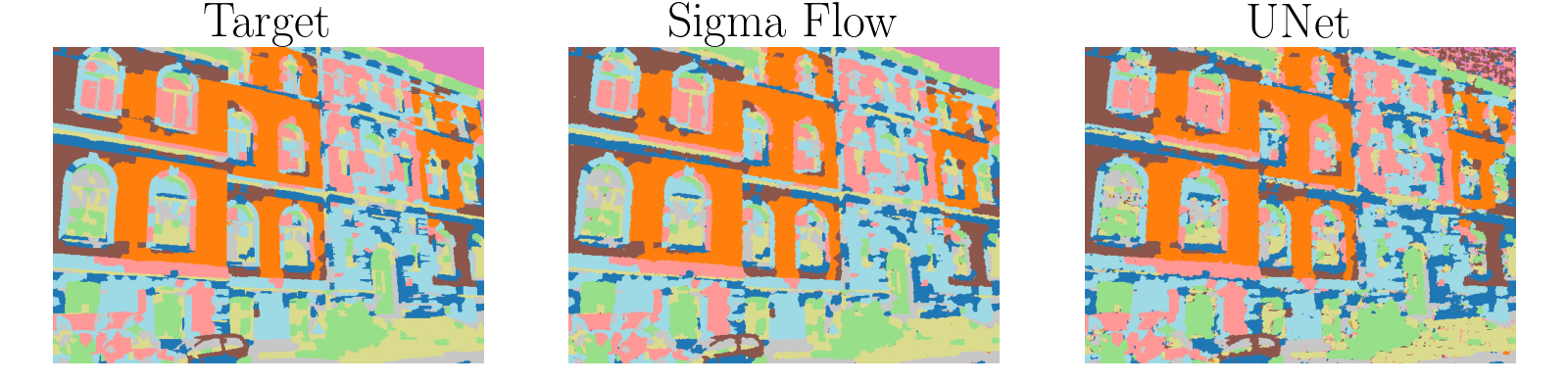}
        \caption{
            Same set-up as (c), yet now using an unseen image taken from a different 3D viewpoint. \textit{Left to Right:} Perturbed input data, target labeling (ground truth), result of the sigma flow model, result of the UNet model. Regarding the error rate, the sigma flow model is on par with or better than the UNet model.
        }
        \label{fig:gen-house-new}
    \end{subfigure}
    \caption{Test evaluation for sigma flow and UNet architectures.}
    \label{fig:test-sigma-UNet}
\end{figure}

\section{Conclusion}\label{sec:Conclusion}
\subsection{Summary}
\paragraph{Sigma flow model.} This paper introduced the sigma flow model for image and metric data labeling on graphs. The model is based on a generalized harmonic energy as objective function between a Riemannian domain and the target manifold, respectively. Geometric integration of the Riemannian gradient flow optimizes the mapping. The flow is called `sigma flow' because the approach resembles the mathematical structure of sigma models known in quantum field and string theory.

Specific choices of the domain and target manifold and the generalized harmonic energy yield variants of the sigma flow approach. We specifically focused on image data on a two-dimensional domain manifold and the probability simplex equipped with the Riemannian Fisher-Rao metric. This variant of the sigma flow model combines the Laplace-Beltrami approach to image denoising and enhancing introduced by Sochen, Kimmel and Malladi about 25 years ago \cite{sochenGeneralFrameworkLow1998} and the assignment flow apporach to metric data labeling developed by the authors. We proved that this sigma flow model constitutes a proper nonlinear geometric diffusion approach such that its variant based on the generalized entropic harmonic energy constitutes a proper labeling approach.

\paragraph{Two-stage parametrization of structured prediction via sigma flows.}
A remarkable feature of our geometric approach is the chain of self-referring time-variant parametrizations of large-scale structured prediction in terms of sigma flows, as sketched by Figure \ref{fig:top-level-view} and repeated here for the reader's convenience:
\begin{equation}\label{eq:sigma-flow-parametrization}
\dotsb\; \mfk{S}_{t}\;\xlongrightarrow{\mc{O}}\; h_{t}\;\xlongrightarrow{\Delta_{h_{t}}}\; \tau(\mfk{S}_{t},g,h_{t}) \;\xlongrightarrow{\int \dd t}\; \mfk{S}_{t}\;\dotsb 
\end{equation}
The tension field $\tau$ which governs the evolution of the state $t\mapsto \mfk{S}_{t}$ is parametrized by the Laplace-Beltrami operator $\Delta_{h_{t}}$ that itself is parametrized by the Riemannian metric $h_{t}$ of the domain manifold. We showed that by making this metric dependent in terms of a mapping $\mc{O}$ on both given data and the evolving state, our sigma flow model covers a range of established nonlinear PDE models of mathematical image analysis. In particular, the mapping $\mc{O}$ can be parametrized by a neural network whose parameters can be conveniently learned from data, due to the inherent smoothness of our geometric approach and the robust numerics used for geometric integration of the sigma flow.

\paragraph{Expressivity of sigma flows and learning the generator from data.}
We demonstrated the remarkable expressivity of the sigma flow: any image structure can be generated from pure white noise by choosing properly the domain metric $h$, i.e.~the  parametrization of the generator $\Delta_{h}$ (Laplace Beltrami operator) of the generator $\tau(\Delta_{h})$ (tension field) of the sigma flow. This suggests to study the sigma flow model from the viewpoint of machine learning, since the aforementioned succinct mathematical representation of structured prediction by nonlinear sigma flows should enable strong task-specific adaptivity by using a compact set of parameters learned from data. 

We briefly demonstrated this property using 
a fairly small neural network for the parametrization of the mapping $\mc{O}$, that generates the domain metric $h_{t}$ as `seed' of the sigma flow, according to \eqref{eq:sigma-flow-parametrization}. As proof of concept, we showed empirically that learning in this way the generation of a field of $2\times 2$ metric tensors (i.e.~the evolving discretized metric $h_{t}$) enables to cope with a class of random polygonal scenes, even when contaminated with a  high level of noise. Furthermore, applying this trained model directly to more general labelings of real images deteriorates prediction performance, but does not cause it to break down. 

\subsection{Further Work}
The last statement raises the question: how rich have parametrizations of the time-variant metric $h_{t}$ to be in order to generalize the labeling prediction performance to classes of real images? We point out that this specific question differs from the general problem of the role of the parameters of general black-box deep networks, since 
\begin{enumerate}[(i)]
\item
a major part of the prediction performance is already `hard-wired' by the geometric structure of sigma flows and by geometric integration, which generates the `layers' of a corresponding deep network;
\item
the parametrization only concerns the `generator of the generator' of sigma flows, as discussed above in connection with \eqref{eq:sigma-flow-parametrization}.
\end{enumerate}
These issues regarding the feasibibility of low-dimensional parametrizations should also stimulate probabilistic extensions towards \textit{generative models} for \textit{discrete} structured predictions, along the lines recently worked out using assignment flows \cite{Boll:2024ac}.

Another aspect of our work concerns the appearance of ingredients of the transformer network architecture, as outlined in Section \ref{sec:transformer}, by adopting a `discretize first -- then optimize' approach. Since we adopted the antipodal `optimize first -- then discretize' approach which better conforms to the established mathematical structures of PDE-based data analysis and mathematical physics, we hope that our work contributes to the field of geometric deep learning and to promoting geometric design principles in other areas of scientific machine learning.

\appendix
\section{Appendix -- Glossary of Symbols}

\subsubsection*{Normal Font symbols} 
\begin{tabular}[H]{ll}
$M$ &
smooth closed oriented Riemannian manifold, image domain
\\
$N$ &
smooth manifold \textit{and} number of vertices of a grid graph
\\
$h$ &
Riemannian metric on $M$
\\
$\abs{h}$ &
determinant of $h$
\\
$g$ &
Riemannian metric on $N$
\\
$\Gamma$ &
global sections of vector bundles on $M$
\\
$\delta$ &
Euclidean metric \eqref{eq:color-g-delta-metric} \textit{and} Kronecker delta
\\
$f$ &
smooth function $M \to \R$
\\
$M_{T}$ &
time cylinder $(0,T)\times M$ of $M$ of length $T$ \eqref{eq:def-MT}
\\
$\omega_{h}$ &
volume form of $h$ on $M$ \eqref{eq:int-M-a}
\\
$\dd$ & (de-Rahm) differential on $M$ \eqref{eq:df-space}
\\
$\Omega$ &
weight matrix for S flow \eqref{eq:omega-s} \textit{and} domain in $\R^{2}$ for image functions \eqref{eq:AnisDiff}
\\
$X$ &
open domain in $\R^{2}$ \eqref{eq:cont-domain-s-flow}
\\
$u$ &
time dependent gray value image $u:\Omega_{T} \to \R$ \eqref{eq:def-ut}
\\
$f_{t}$ & 
for $f:M_T \to \R$ and $t \in (0,T)\colon$ $f_{t} = f(t,\cdot)$ is the constant time slice of $f$
\\
$\TT^{2}$ & 
 two-dimensional torus $S^{1} \times S^{1}$
\\
$\tau(f,g,h)$ &
tension field \eqref{eq:tension_field}
\\
$E_{H}$ &
harmonic energy \eqref{eq:def-EH-general}
\\
$\Delta_{h}$ &
Laplace-Beltrami operator \eqref{eq:delta-EH}
\\
$\phi_{n}$ &
eigenfunctions of $\Delta_{h}$ \eqref{eq:Delta-h-spectrum}
\\
$\lambda_{n}$ &
eigenvalues of $\Delta_{h}$ \eqref{eq:Delta-h-spectrum} 
\\
$a_n$ &
expansion coefficients of $\vt$ in the basis $\phi_n$ \eqref{eq:vti-Laplacian-decomposition}
\\
$\kappa$ &
warp factor for anisotropic diffusion \eqref{eq:WAD-system}
\\
$B$ &
scalar function for the generalized diffusion ansatz (Perona-Malik, TV) \eqref{eq:def-EB} 
\\
$\lambda$ &
conformal factor \eqref{eq:conformal-transformation} 
\\
$C(\mc{O})$ &
uniform boundedness of $\mc{O}$
\\
$V$ &
diffusion tensor (in the anisotropic diffusion setting) \eqref{eq:AnisDiff}
\\
$c_{1},c_{2}$ &
bounds for the matrix function $\mc{B}$ \eqref{eq:bounds-mcB}
\\
$m^{2}$ &
mass parameter \eqref{def:entropic-harmony-energy}
\\
$c$ &
number of labels (categories, classes)
\\
$\triangle_{c}$ &
probability simplex in $\R^{c}$ \eqref{eq:def-Delta-p}
\\
$\S_{c}$ &
relative interior of the simplex $\triangle_{c}$ \eqref{eq:def-Sc}
\\
$\chi_{e},\chi_{m}$ &
coordinate charts on $\S_{c}$ \eqref{eq:def-kappa-chart} 
\\
$p_{i}$ &
$m$-coordinates on $\S_{c}$ \eqref{eq:def-theta-p}
\\
$\theta_{i}$ &
$e$-coordinates on $\S_{c}$ \eqref{eq:def-theta-p}
\\
$\cc$ & 
dimension $\cc = c-1$ of $\S_{c}$; index range of local $m$- and $e$-coordinates \eqref{eq:def-kappa-chart}
\\
$T_{0}$ &
tangent space to $\S_{c}$ \eqref{eq:def-T0}
\\
$v$ & tangent vector $v\in T_{0}$ 
\\
$\Pi_{0}$ &
orthogonal tangent space projection \eqref{eq:pi-0}
\\
$R$ &
replicator operator \eqref{eq:replicator-tensor}
\\
$\mathrm{sm}$ &
softmax function \eqref{eq:softmax}
\\
$\alpha$ &
parameter for $\alpha$-connections on $\S_c$, see \eqref{eq:def-alpha-Gamma} 
\\
$\Gamma$ &
Christoffel symbols on $\S_{c}$ \eqref{eq:gamma0} \textit{and} sections of vector bundles 
\\
$\varphi$ &
negative entropy \eqref{eq:HH}
\\
$H$ &
entropy $H = -\varphi$ \eqref{eq:entropy-eps}
\\
$\tilde \varphi$ &
shifted negative entropy \eqref{eq:HH}
\\
$\psi$ & convex (Legendre-Fenchel) conjugate $\psi=\varphi^{\ast}$ of $\varphi$ \eqref{eq:def-psi-partition}
\end{tabular}

\noindent
\begin{tabular}{ll}
$E_{S}$ &
continuous domain S flow energy \eqref{eq:cont-domain-s-flow} 
\\
$E_{\tilde \varphi}$ &
entropic harmonic energy \eqref{def:entropic-harmony-energy}
\\
$\Lambda$ &
sphere map \eqref{eq:sphere-map}
\\
$\mathrm{KL}$ &
Kullback-Leibler divergence for probability vectors \eqref{eq:HH} \textit{and} 
\\ & \hspace{0.5cm}
Kullback-Leibler divergence extended to assignment matrices \eqref{eq:h-learning-problem} 
\\
$\mathrm{TS}$ &
test set used in \eqref{eq:ts} 
\\
$\mathrm{Corr}$ &
probability distribution used for corrupting labelings; cf.~\eqref{eq:Corr-objective}
\end{tabular}

\subsubsection*{Symbols for smooth quantities}
\begin{tabular}{ll}
$\fr$ &
Fisher Rao metric \eqref{eq:fr_metric}
\\
$\veps$ &
parameter regularizing the Fisher-Rao-metric \eqref{eq:def-fr-eps} 
\\
$\mathfrak{d}$ &
partial derivative along $\theta$-coordinates on $\S_{c}$ \eqref{eq:def-pdi}
\\
$\mathfrak{S}$ &
smooth function $M_{T} \to \S_{c}$, smooth analog of assignment matrix $\mathsf{S}$ \eqref{eq:Bf-simplex}, fraktur font capital letter $S$
\\
$\mathfrak{P}$ &
smooth function $\mathfrak{P}\colon M\to\S_{c}$, starting point of sigma flow, smooth analog of $\mathsf{P}$ \eqref{eq:Bf-simplex}, fraktur font capital letter $P$
\\
$\vp$ &
$m$-coordinate expression $\vp = \chi_{m} \circ \mathfrak{S}_{t}$ of $\mathfrak{S}_{t}$ \eqref{eq:def-vt-vp}
\\
$\vt$ & 
$e$-coordinate expression $\vt= \chi_{e} \circ \mathfrak{S}_{t}$ of $\mathfrak{S}_{t}$ \eqref{eq:def-vt-vp}
\\
$\mathfrak{s}$ & 
$\mathfrak{S}_{t}$ mapped $\mathfrak{s}=\Lambda \circ \mathfrak{S}_{r}$ to the sphere \eqref{eq:def-vs}
\\
$\mathfrak{v}$ & 
tangent space representation $\mathfrak{v}= \mathrm{sm}^{-1} \circ \mathfrak{S}_{t}$  of $\mathfrak{S}_{t}$ (Prop.~\ref{prop:sigma-flow-tangent-representation})
\\
$\Psi$ & 
composition $\Psi= \psi \circ \mathfrak{S}_{t}$ of the Hessian potential of \eqref{eq:tension-coordinates}
\\
$\mfk{L}$ &
label configuration $\TT^{2} \to [20]$ used in experiments \eqref{eq:def-vL}
\end{tabular}

\subsubsection*{Symbols for discretized quantities}

\begin{tabular}{ll}
$\mathsf{S}$ &
assignment matrix, spatial discretization of $\mathfrak{S}$ \eqref{eq:def-fS}
\\
$\mathsf{L}_{\Omega}$ &
$\Omega$-induced discrete Laplacian \eqref{eq:Omega-L}
\\
$\mathsf{P}$ &
assignment matrix,  starting point for S flow \eqref{eq:def-s-flow-0}, \eqref{eq:def-s-flow}, spatial discretization of $\mathfrak{P}$
\\
$\mathsf{h}$ &
discretization of $h$ \eqref{eq:def-fh}
\\
$\mathsf{L}_{\mathsf{h}}$ &
discretization of $\Delta_{h}$ \eqref{eq:def-fL}
\\
$\mathsf{D}$ &
discretization of $\partial_{\mu}$ \eqref{eq:def-fD}
\\
$\mathsf{O}$ &
discretization of $\mc{O}$ \eqref{eq:mcO-discrete}
\end{tabular}

\subsubsection*{Calligraphic Symbols}
\begin{tabular}{ll}
$\mc{H}$ &
Gaussian mean curvature 
\\
$\mc{N}$ &
surface normal vector 
\\
$\mc{W}_{c}^{N}$ &
assignment manifold \eqref{eq:def-mcW}
\\
$\mc{G}$ &
graph $\mc{G}=(\mc{V},\mc{E})$
\\
$\mc{V}$ &
graph vertices 
\\
$\mc{E}$ &
graph edges
\\
$\mc{J}$ &
S flow potential \eqref{eq:def-JJ}
\\
$\mc{O}$ &
operator mapping states to metrics \eqref{eq:AnisDiff}, \eqref{eq:WAD-system}, \eqref{eq:WAD-LB}, \eqref{eq:Bf-simplex}, \eqref{eq:mcO-to-be-learned}, \eqref{eq:Corr-objective}
\\ & \hspace{0.5cm}
\eqref{eq:sigmaflow}, \eqref{eq:asgmflw}, \eqref{eq:hsigmaflow}, \eqref{eq:ah-sigmaflow}
\\
$\mc{C}$ &
class of functions parametrized by a neural network \eqref{eq:nn-function-class}
\\
$\mc{B}$ &
matrix function \eqref{eq:def-mcB-theta}
\\
$\mc{F}$ &
metric space
\end{tabular}

\begin{center}
    We deliberately overload some symbols when there is no danger of confusion due to the context.
\end{center}
\label{appendix}

\newpage
\bibliographystyle{amsalpha}
\bibliography{SigmaModels}

\clearpage
\end{document}